\newtheorem*{rep@theorem}{\rep@title}
\newcommand{\newreptheorem}[2]{%
\newenvironment{rep#1}[1]{%
 \def\rep@title{#2 \ref{##1}}%
 \begin{rep@theorem}}%
 {\end{rep@theorem}}}
\newtheorem{theorem}{Theorem}[section]
\newtheorem{lemma}[theorem]{Lemma}
\newtheorem{proposition}[theorem]{Proposition}
\newtheorem{corollary}[theorem]{Corollary}
\newtheorem*{theorem*}{Theorem}
\newtheorem*{proposition*}{Proposition}
\theoremstyle{remark}
\newtheorem{definition}[theorem]{Definition}
\newcommand{\cplx}{\mathbb{C}}
\newcommand{\conc}{\mathcal{C}}
\newcommand{\AC}{\mathcal{AC}}
\newcommand{\Z}{\mathbb{Z}}
\newcommand{\Q}{\mathbb{Q}}
\newcommand{\C}{\mathbb{C}}
\newcommand{\T}{\mathbb{T}}
\newcommand{\im}{\operatorname{im}}
\newcommand{\lnk}{\operatorname{lk}}
\newcommand{\eref}[1]{(\ref{#1})}
\newcommand{\iterate}[3]{\ensuremath{\underset{#1}{\overset{#2}{#3}}}}
\newcommand{\bdry}{\ensuremath{\partial}}
\newcommand{\onto}{\twoheadrightarrow}
\newcommand{\into}{\hookrightarrow}
\newcommand{\inter}{\operatorname{int}}
\renewcommand{\th}{$^{\text{th}}$ }
\definecolor{myPurple}{rgb}{139, 0, 204}
\numberwithin{equation}{section}
\begin{document}
\title[computing $\rho$-invariants of links]{Computing the rho-invariants of links via the signature of colored links with applications to the linear independence of twist knots.}

\author{Christopher William Davis}

\address{Department of Mathematics \\ Rice UNIVERSITY}
\email{youremailaddress@math.uh.edu}

\date{\today}

\subjclass[2010]{57N65}

\begin{abstract}
We use a link invariant defined by Cimasoni-Florens to compute $\rho$-invariants.  This generalizes results of Cochran-Teichner and Friedl on  knots to the setting of links.  As an application, we prove with only twelve possible exceptions that the twist knots of algebraic order two are linearly independent in the topological concordance group.
\end{abstract}

\maketitle

\section{Introduction}

Von Neumann $\rho$-invariants associated to metabelian representations have played a central role in many recent constructions of knots which are algebraically slice and yet are not slice.  For example, see \cite{derivatives}, \cite{MyFirstPaper}, \cite{Fr3}.  These invariants, however, are difficult to compute.  A standard approach is to build a link (called a derivative in \cite{derivatives}) associated to that knot.  Metabelian invariants of the knot are then approximated by abelian invariants of this derivative. (See for example \cite[Corollary 5.9]{derivatives} of \cite[Theorem 5.6]{MyFirstPaper}).  In the case that the knot is genus 1, its derivative is also a knot and so existing tools which compute  abelian $\rho$-invariants of knots may be applied.  See \cite[Proposition 3.3]{Fr2}, \cite[Corollary 4.3]{Fr3}, \cite[Proposition 5.1]{structureInConcordance}.  

The goal of this manuscript is to extend these computational tools to the setting of links.  We provide results (Theorems~\ref{unitary case} and \ref{L2 Rho}) which compute abelian $\rho$-invariants of links in terms of a signature function defined by Cimasoni and Florens in \cite{CimFlo}.  We use these results to compute the metabelian invariant of \cite{MyFirstPaper} and find that with only twelve possible exceptions the twist knots (Figure~\ref{fig:twist}) which are of order 2 in the algebraic concordance group are linearly independent in the topological knot concordance group.

\subsection{Background and statement of results}

An $n$-component link is an isotopy class of smooth, ordered, oriented embeddings of a disjoint union of $n$ copies of the circle $ S^1 \sqcup \dots \sqcup  S^1$ into $S^3$.  A knot can be thought of as a 1-component link.  Two $n$-component links $L$ and $J$ are called \textbf{concordant} if there is a locally flat, ordered, oriented embedding of a disjoint union of $n$ annuli $\underset{n}{\sqcup} (S^1\times [0,1])$ into $S^3\times [0,1]$ sending $\underset{n}{\sqcup}(S^1\times\{0\})$ to $L$ in $S^3\times \{0\}$ and $\underset{n}{\sqcup}(S^1\times\{1\})$ to $J$ in $S^3\times \{1\}$.  If one restricts to knots then the set of concordance classes forms a group, $\conc$, called the knot concordance group, under the operation of connected sum.

  For a knot $K$, the \textbf{preferred longitude} of $K$ is the unique isotopy class of pushoffs of $K$ which are nullhomologous in the complement of $K$.  For a link $L$, the \textbf{zero surgery} on $L$, denoted $M(L)$, is defined by cutting out regular neighborhoods of the components of $L$ and gluing back in solid tori so that the preferred longitudes of the components of  $L$ bound disks. 
  
  For any closed oriented 3-manifold $M$, the von Neumann $\rho$-invariant assigns to each representation of the fundamental group of $M$ a number.  There are two varieties of $\rho$-invariant that interest us in this paper.  The {unitary $\rho$-invariant} takes a representation $\alpha$ from $\pi_1(M)$ to $U(n)$, the group of $n\times n$ unitary matrices and returns an integer, $\rho(M,\alpha)$.  The $L^2$-$\rho$-invariant takes a homomorphism from $\pi_1(M)$ to a discrete group $\Gamma$ and  returns a real number $\rho^{(2)}(M,\phi)$.  We recall some needed properties of $\rho$-invariants in subsection~\ref{rho invt facts}.

The invariants studied in this paper are $\rho$-invariants of $M(L)$ corresponding to either one-dimensional representations ($\alpha:\pi_1(M(L))\to U(1)$) or corresponding to homomorphisms to finitely generated abelian groups ($\phi:\pi_1(M(L))\to A$).  We make our computation in terms of a signature function of Cimasoni and Florens, which we now recall.


In \cite{Cooper1982, CooperThesis} Cooper gives a generalization of the concept of a Seifert surface (A surface bounded by a knot) to the setting of links.  Using this generalization, Cimasoni and Florens \cite{CimFlo} define a signature function associated to a link.  Much like the Tristram-Levine signature, it can be explicitly computed.  In Theorem~\ref{unitary case} we show that unitary $\rho$-invariants of zero surgery on a link with zero pairwise linking are given by evaluating this signature function.   In  Theorem~\ref{L2 Rho} we express $L^2$-$\rho$-invariants corresponding to homomorphisms to abelian groups in terms of integrals of this signature function.  

We need some notation before we can state our results.  By $\T^n$ we mean the  set of $n$-tuples of complex numbers of norm $1$ in $\cplx^n$.  By $\T^n_*$ we mean the set of all $n$-tuples of complex numbers of norm $1$ other than $1$ itself:
$$
\begin{array}{c}
\T^n = \{(\omega_1,\dots, \omega_n)\in \cplx^n\text{ such that for all }i, |\omega_i|=1\}\\
\T^n_* = \{(\omega_1,\dots, \omega_n)\in \cplx^n\text{ such that for all }i, |\omega_i|=1\text{ and } \omega_i\neq 1\}.
\end{array}
$$
In \cite{CimFlo}, Cimasoni and Florens associate to an $n$-colored link (a link whose every component has been colored with an integer between $1$ and $n$) a function $\sigma_L:\T^n_*\to \Z$, which we call  \textbf{Cimasoni-Florens signature function} of $L$.  Any $n$-component link has an $n$-coloring which colors the $i$\th component of $L$ with the integer $i$.  Unless otherwise stated links in this paper are assumed to be colored in this way.

Cimasoni and Florens  prove  that many one-dimensional unitary $\rho$-invariants of three-manifolds gotten by arbitrary surgery on links
 are given in terms of these invariants.  The specialization of their theorem to the case of zero surgery on links with zero pairwise linking gives:

\begin{proposition*}[see Theorem 6.7 of \cite{CimFlo}]
Let $L$ be an $n$-component link with zero pairwise linking numbers and $M(L)$ be the 3-manifold gotten by zero surgery on $L$.  For an integer $q>0$ and integers $p_1,\dots, p_n$ coprime to $q$ let $\alpha:H_1(M(L))\to U(1)$ be the representation sending the meridian of the $j$\th component of $L$ to $\omega_j=e^{2\pi i p_j/q}$.  Then
$$
\rho(M(L),\alpha) = \sigma_L(\omega)
$$
where $\omega = (\omega_1,\dots, \omega_n)$.
\end{proposition*}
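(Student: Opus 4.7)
The plan is to compute $\rho(M(L),\alpha)$ using the standard $4$-manifold description of the unitary $\rho$-invariant: if $W$ is a compact oriented $4$-manifold with $\partial W = M(L)$ and $\alpha$ extends to a representation $\tilde\alpha : \pi_1(W) \to U(1)$, then
$$\rho(M(L),\alpha) = \sign^{\tilde\alpha}(W) - \sign(W),$$
where $\sign^{\tilde\alpha}$ denotes the signature of the twisted intersection form on $H_2(W;\cplx_{\tilde\alpha})$. So the first task is to exhibit a geometrically tractable such $W$ and verify that $\alpha$ extends.

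The natural choice of $W$ is built from a C-complex for $L$. Since the pairwise linking numbers vanish, $L$ bounds a C-complex $F = F_1 \cup \cdots \cup F_n$ in $S^3$, where each $F_j$ is a Seifert surface for the $j$\th component and the $F_j$'s meet pairwise only in clasps. Push $F$ into $B^4$ and perform the usual cut-and-paste (thicken $F$ and glue a $2$-handle neighborhood of each $F_j$) to produce a $4$-manifold whose boundary is precisely the zero-surgery $M(L)$; call this $W$. The homology $H_1(W)$ is generated by meridians of the components of $L$ pushed into the exterior, so the map $H_1(M(L)) \to H_1(W)$ is an isomorphism and $\alpha$ has a unique extension $\tilde\alpha$. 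Because each $\omega_j$ is a $q$\th root of unity, $\tilde\alpha$ factors through a finite cyclic group and the twisted signature is well defined.

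The next step is to identify $\sign^{\tilde\alpha}(W) - \sign(W)$ with $\sigma_L(\omega)$. The twisted intersection form on $H_2(W;\cplx_{\tilde\alpha})$ admits a presentation in terms of the generalized Seifert matrices $A^\varepsilon$ of the C-complex $F$, with $\varepsilon \in \{\pm 1\}^n$ ranging over choices of pushoff normal to each $F_j$. A Mayer--Vietoris computation (analogous to the classical proof of the Tristram-Levine formula, where one decomposes $W$ along the pushed-in $F$) expresses the twisted intersection form as the Hermitian matrix
$$ H(\omega) = \sum_{\varepsilon} \prod_{j=1}^n (1-\bar\omega_j^{\varepsilon_j}) \, A^\varepsilon,$$
which is exactly the Hermitian form whose signature Cimasoni and Florens use to \emph{define} $\sigma_L(\omega)$. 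Combined with the vanishing of the ordinary signature of $W$ (since the untwisted intersection form on $H_2(W)$ is hyperbolic by the clasp structure of $F$), this yields the claimed equality.

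The main obstacle is the third step: matching the twisted intersection form on $H_2(W;\cplx_{\tilde\alpha})$ with Cimasoni--Florens's Hermitian form $H(\omega)$ on the nose, including signs and the treatment of the $2^n$ different Seifert matrices indexed by $\varepsilon$. Doing this cleanly requires choosing generators for $H_2(W;\cplx_{\tilde\alpha})$ compatible with the cell structure coming from the clasp intersections and keeping careful track of which pushoff corresponds to which $\omega_j^{\pm 1}$. The finiteness/generic hypothesis $\omega \in \T^n_*$ ensures that none of the factors $(1-\bar\omega_j)$ vanish, so the form is nondegenerate and the identification goes through without needing a perturbation argument.
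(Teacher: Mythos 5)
There is a genuine gap, and it sits exactly where your construction of $W$ is vaguest. If you attach $0$-framed $2$-handles to $B^4$ along all components of $L$, the resulting manifold $W_0$ does have $\bdry W_0 = M(L)$, but $H_1(W_0)=0$ (the $2$-handles kill the meridians), so $\alpha$ does \emph{not} extend over it and the twisted signature is not even defined. To make $\alpha$ extend you must instead pass to the exterior of the closed surfaces $\widehat F_j$ (pushed-in surface union handle core) inside $W_0$ --- and then the boundary is no longer ``precisely the zero-surgery $M(L)$'': it is $M(L)\sqcup \widehat F_1\times S^1\sqcup\cdots\sqcup \widehat F_n\times S^1$. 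Your sentence asserting $\bdry W = M(L)$ is therefore false for the manifold you actually need, and the Atiyah--Patodi--Singer/Casson--Gordon formula then reads $\sigma(W,\tilde\alpha)-\sigma(W)=\rho(M(L),\alpha)+\sum_j\rho(\widehat F_j\times S^1,\alpha)$. The missing ingredient is a proof that each $\rho(\widehat F_j\times S^1,\alpha)$ vanishes; this is not automatic and requires a separate argument (extending $\alpha$ over $X\times S^1$ for a handlebody $X$ bounded by $\widehat F_j$, using that the image of $\alpha$ lies in the roots of unity). This is precisely how the paper proceeds in Section~2: Lemma~\ref{Build W} produces the $4$-manifold with the extra $F_j\times S^1$ boundary components, and Proposition~\ref{Unitary product} is devoted to killing their $\rho$-invariants before Proposition~\ref{unitary version take 1} can be deduced.

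Two smaller points. First, the identification of the twisted intersection form of the surface exterior with the Cimasoni--Florens Hermitian matrix $\sum_\varepsilon\prod_j(1-\bar\omega_j^{\varepsilon_j})A^\varepsilon$ is the technical heart of \cite{CimFlo}; the paper does not redo it but cites it as \cite[Theorem 6.1]{CimFlo}, and your one-sentence appeal to ``a Mayer--Vietoris computation'' should be an explicit citation rather than a claimed proof. Second, your assertion that $\sigma(W)=0$ because the untwisted form is ``hyperbolic by the clasp structure'' is not the right reason: for the exterior of the pushed-in surfaces one has $H_2=0$, and after adding the hollow $2$-handles $H_2(W)$ is generated by pushoffs of the $\widehat F_j$, which are carried by the boundary, so the untwisted form is identically zero. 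Neither of these smaller issues is fatal, but the boundary-component gap is: without the vanishing of $\rho(\widehat F_j\times S^1,\alpha)$ the argument does not close.
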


Notice that this theorem does not apply for all cloices of $\omega$.  Take for example $\omega = (e^{2\pi i 4/5},e^{2\pi i 5/4})$.  Our first main result is Theorem~\ref{unitary case}, which allows us to compute more one dimensional $\rho$-invariants of zero surgery on links with zero pairwise linking.  

By $\T^n_\Q$ we  mean the set of all $n$-tuples of complex roots of unity:
$$
\T^n_\Q = \left\{(\omega_1,\dots, \omega_n)\in \cplx^n\text{ such that for all }j, \omega_j^k=1\text{ for some }k\right\}.
$$

\begin{reptheorem}{unitary case}
For any link $L$ with zero pairwise linking there is a function $\widehat\sigma_L:\T^n\to \Z$ such that
\begin{enumerate}
\item \label{gives rho} For $\omega\in \T^n_\Q$ and the representation $\alpha_{\omega}:H_1(M(L))\to U(1)$ sending $\mu_i\mapsto \omega_i$, $\rho(M(L),\alpha_{\omega}) = \widehat\sigma_L(\omega)$.
\item \label{agree} $\sigma_L$ and $\widehat \sigma_L$ agree on $\T^n_\Q\cap \T^n_*$.
\end{enumerate}
\end{reptheorem}

We give a precise meaning to $\widehat\sigma_L$ in Definition~\ref{define extension}.

Before we can state our second main result, Theorem~\ref{L2 Rho}, which expresses $L^2$-$\rho$-invariants in terms of $\widehat{\sigma_L}$, we need some additional notation.  Let $A$ be the abelian group presented as
$A=\langle g_1,\dots,g_n | r_1,\dots, r_l\rangle$
where $r_j=\displaystyle\sum_{i=1}^n {a_{i,j}} \cdot g_i$ for integers $a_{i,j}$ We define $\T_A$ be the set of all $n$-tuples in $\T^n$ satisfying the relations $r_1,\dots, r_l$: 
$$
\T_A=\{(\omega_1,\dots,\omega_n)\in \T^n\text{ such that for }1\le j\le l\text{ }, \prod_{i=1}^n \omega_i^{a_{i,j}} = 1\}.
$$
The space $\T_A$ is isomorphic (as a topological group) to the set of all representations from $A$ to $U(1)$.  The isomorphism is given by sending $(\omega_1,\dots, \omega_n)\in \T_A$ to the representation mapping the generator $g_i$ to  $\omega_i$.  In particular, as a topological group $\T_A$ is independent of presentation and consists of $k$ embedded disjoint copies of $\T^R$ in $\T^n$ if $A$ has rank $R$ and has $k$ elements in its torsion subgroup.  

The $R$-dimensional Haussdorff measure on $\T_A\subseteq \T^n$ is finite and translation invariant on $\T_A$.  By the uniqueness of Haar measure, see for example \cite[Theorem 11.4]{Conway1990}, the isomorphism from $\T_A$ to the group of unitary representations of $A$ sends normalized Haussdorff measure to normalized Haar measure.  This implies that normalized $R$-dimensional Hausdorff measure on $\T_A$ is also independent of presentation.

We prove the following theorem:

\begin{reptheorem}{L2 Rho}
Let $A=\langle g_1\dots g_n|r_1\dots r_l\rangle$ be a rank-$R$ abelian group.  Let $L$ be an $n$-component link with zero pairwise linking numbers.  Let $\phi:H_1(M(L))\to A$ be given by sending $\mu_i$ to $g_i$, then 
\begin{equation*}\rho^{(2)}(M(L),\phi)=\frac{1}{\lambda(\T_A)}\int_{\T_A}\widehat\sigma_L(\omega)d\lambda(\omega)\end{equation*}
where $\lambda$ is $R$-dimensional Hausdorff measure on $\T_A$.
\end{reptheorem}

  We outline an application of Theorem~\ref{L2 Rho} to the concordance of the twist knots of Figure~\ref{fig:twist}.    We begin by recalling the theory of algebraic concordance.  For a knot $K$ and any genus $g$ surface $F$ bounded by $K$, a nonseparating $g$-component link $L= L_1\cup\dots\cup L_g$ on $F$ is called a \textbf{derivative} of $K$ in \cite{derivatives} if the linking number between  $L_i$ and the result of pushing $L_j$ off of $F$ vanishes for all $i$ and $j$.  If $K$ has a derivative then $K$ is called \textbf{algebraically slice}.  In \cite{Le10} Levine shows that the quotient of $\conc$ (the knot concordance group) by the algebraically slice knots is isomorphic to $\Z^\infty\oplus \Z_2^\infty\oplus \Z_4^\infty$.  This quotient is called the \textbf{algebraic concordance group} and is denoted $\AC$.  In some sense all of this complexity is represented by the twist knots.  They generate a subgroup of $\AC$ isomorphic to $\Z^\infty\oplus \Z_2^\infty\oplus \Z_4^\infty$.  To be precise:
  \begin{itemize}
  \item $T_n$ is of infinite order in $\AC$ if $n<0$.
  \item $T_n$ is algebraically slice if $4n+1$ is a square.   
  \item $T_n$ is  of order 4  in $\AC$  if for some prime $p$ congruent to $3$ mod 4, $p$ divides $4n+1$ with odd multiplicity. 
  \item $T_n$ is of order 2 in $\AC$  otherwise.
  \end{itemize}
  
    In \cite{MyFirstPaper}, the author defines a $\rho$-invariant called $\rho^1$.  The set of  twist knots which are of order 2 or 4 in $\AC$ with $\rho^1(T_n)\neq 0$ is linearly independent in $\conc$ \cite[Corollary 4.4]{MyFirstPaper}.  If $L$ is a derivative of $T_n\#T_n$  
    then $\rho^1(T_n)$ has a bound in terms of a $\rho$-invariant of $M(L)$ corresponding to a homomorphism to an abelian group \cite[Theorem 5.7]{MyFirstPaper}.  In Section~\ref{application} we find bounds on this abelian $\rho$-invariant.  In doing so we prove the following theorem:

\begin{reptheorem}{linear indep done}
The infinite set containing all of the twist knots $T_n$ which are algebraically of order 2 is linearly independent in $\conc$ with the following $12$ possible exceptions:
$$
\begin{array}{rcl}
n&=&1, 
11, 
16, 
29, 
36, 
38, 
51, 55, 61, 
66, 83, 
101
.
\end{array}
$$
\end{reptheorem}

The $0$-twist knot is the unknot.  The $2$-twist knot is the stevedore knot, which is slice.  The $1$-twist knot, $T_1$ is the figure eight knot, which is of order 2 in the concordance group.  Thus $n=1$ is an exception to Theorem~\ref{linear indep done}.  In previous results, Casson and Gordon \cite{CG1} show that the algebraically slice twist knots (with the exception of the $0$ and $2$-twist knots) are not slice.  Jiang \cite{Ji1} shows that there is an infinite set of algebraically slice twist knots that are linearly independent in $\conc$.   Livingston and Naik \cite{LiN1} show that infinitely many of those twist knots that are algebraically of order 4 are not of finite order in $\conc$.  Tamulis \cite[Corollary 1.2]{Tamulis} finds an infinite linearly independent set of algebraic order 2 twist knots.   Cha \cite{polynomialSplittingOfCG} shows that no nontrivial linear combination of the twist knots (with the exception of the 0, 1, and 2-twist knots) is ribbon. Since the 2-fold branched cover of the $n$-twist knot is the Lens space $L(4n+1,2n)$, results of Lisca \cite{Lis1} show that the twist knots (with the same exceptions) are linearly independent in the \emph{smooth} concordance group.

It is worth noting that the result of Tamulis mentioned above is used in our proof of Theorem~\ref{linear indep done}.

\begin{figure}[h!]
\setlength{\unitlength}{1pt}
\begin{picture}(80,60)
\put(0,0){\includegraphics[width=.2\textwidth]{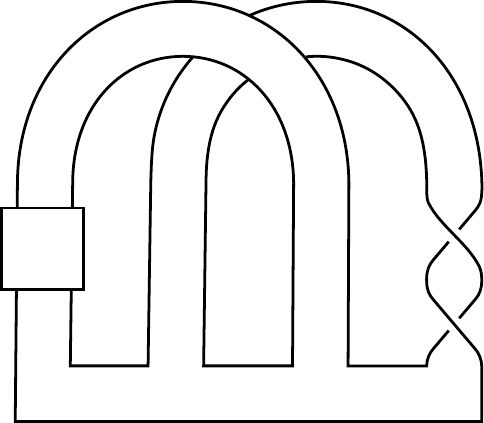}}
\put(3,23){$n$}
\end{picture}
\caption{The $n$-twist knot.}\label{fig:twist}
\end{figure}

\subsection{Signatures and $\rho$-invariants}\label{rho invt facts}

We recall the definitions of the unitary and $L^2$-$\rho$-invariants.  Each is given in terms of the twisted signatures of 4-manifolds. 

Let $W$ be a compact oriented connected 4-manifold.   There is a symmetric bilinear form $Q_W:H_2(W;\C)\times H_2(W;\C)\to \C$ called the intersection form.  In an abuse of notation we let $Q_W$ refer also to a matrix representing this form.  The signature of $W$ is defined to be the signature of $Q_W$:
$$
\sigma(W)=\sigma(Q_W).
$$
For concreteness, the signature of a
 complex Hermitian matrix is given by the number of positive eigenvalues minus the number of negative eigenvalues counted with multiplicity.  

A representation $\alpha:\pi_1(W)\to U(n)$ induces a $\C[\pi_1(W)]$-module structure on $\C^n$.  We will use each of $\C^n_\alpha$ and $V_\alpha$ to encode this module structure.  Let $\dim(\alpha) = n$ be the dimension of the representation.

For a unitary representation $\alpha$ of  the fundamental group of 
$W$, 
the $k$\th homology of $W$ twisted by $\alpha$ is denoted $H_k(W;V_\alpha)$.
  There is a  twisted intersection form,
$Q^\alpha_W:H_2(W;V_\alpha)\times H_2(W;V_\alpha)\to \cplx.$
This form is also Hermitian.  The signature of $W$ twisted by $\alpha$ is defined to be the signature of $Q_W^\alpha$:
$$
\sigma(W,\alpha)=\sigma(Q_W^\alpha).
$$

  In \cite{APS2}, Atiyah, Patodi and Singer associate to a 3-manifold $M$ and a unitary representation $\alpha$ of $\pi_1(M)$ an integer, $\rho(M,\alpha)$.  In this paper we make use of a reformulation due to Casson and Gordon \cite{CG1, CG2}.  
 
 \begin{definition}\label{definition of unitary rho}
  For compact connected oriented 3-manifolds $M_1,\dots, M_k$, suppose that $W$ is a compact connected oriented  4-manifold with  $\bdry W = \underset{j=1}{\overset{k}{\sqcup}}M_j$.  Let  $\alpha$ be a unitary representation of $\pi_1(W)$.  Let $\alpha_j = \alpha\circ (i_j)_*$ be the representation of $\pi_1(M_j)$ given by composition with the inclusion induced map $(i_j)_*:\pi_1(M_j)\to \pi_1(W)$.  Then
$
\displaystyle\sum_{j=1}^k\rho(M_j,\alpha_j)=\sigma(W,\alpha)-\dim(\alpha)\cdot\sigma(W).
$
\end{definition}


For a closed oriented 3-manifold $M$ with coefficient system $\phi:\pi_1(M)\to\Gamma$ the $L^2$-$\rho$-invariant is likewise given in terms of a coefficient system on a bounded 4-manifold, for example see \cite[Section 3]{Ha2}.  We treat this result as the definition of $\rho^{(2)}$.

\begin{definition}\label{defn of rho}
For $j=1,\dots k$ let $M_j$ be a 3-manifold and $\phi_j:\pi_1(M_j)\to\Gamma_j$ be a homomorphism.  If $W$ is a compact oriented connected 4-manifold with $\bdry W = \underset{j=1}{\overset{k}{\sqcup}} M_j$ and $\psi:\pi_1(W)\to \Lambda$ is a homomorphism such that such that for each $j$ there is a monomorphism $\theta_j:\Gamma_j\into \Lambda$ making the following diagram commute:
\begin{equation*}
 \begin{diagram}
\node{\pi_1(M_j)} \arrow{e,t}{\phi_j}
         \arrow{s,r}{(i_j)_*}
	     \node{\Gamma_j} \arrow{s,r,L}{\theta_j}\\
       \node{\pi_1(W)} \arrow{e,t}{\psi}
     \node{\Lambda} 
      \end{diagram}
\end{equation*}
      then 
      $\displaystyle\sum_{j=1}^k\rho^{(2)}(M_j,\phi_j) = \sigma^{(2)}(W,\psi) - \sigma(W)$ where $\sigma^{(2)}$ is the $L^2$-signature of $W$ twisted by the coefficient system $\psi$.
 \end{definition}
 


   Instead of recalling the definition of the $L^2$-signature, we state a powerful result of L\"uck and Schick which computes $L^2$-signatures corresponding to residually finite groups in terms of regular signatures of compact covers.    For a residually finite group $\Gamma$ a \textbf{resolution} for $\Gamma$ is a a nested sequence of finite index normal subgroups, 
$
\Lambda = \Lambda_0\ge \Lambda_1\ge\dots,
$
 with $\Lambda_0 \cap  \Lambda_1\cap\dots=0$.
   
   \begin{proposition}[Theorem 0.1 of \cite{approxL2Sign}]
   \label{LuckSchickThm}
   Suppose that $W$ is a compact oriented 4-manifold and $\phi:\pi_1(W)\to \Lambda$ is a homomorphism to a residually finite group with resolution
$
\{\Lambda_k\}.
$
 The composition $\phi_k:\pi_1(W)\to \Lambda \to \Lambda/\Lambda_k$ gives a homomorphism to a finite group.  This homomorphism corresponds to a compact cover $\widetilde{W_k}$ of $W$.   Then
\begin{equation}\label{residually finite formula}
\sigma^{(2)}(W,\phi) = \displaystyle\lim_{k\to\infty}{\dfrac{1}{|\Lambda/\Lambda_k|}\sigma(\widetilde{W_k})}
\end{equation}
where $\sigma(\widetilde{W_k})$ is the classical signature of the compact 4-manifold $\widetilde{W_k}$.
\end{proposition}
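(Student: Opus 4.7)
The plan is to realize both sides of \eqref{residually finite formula} as traces of the sign function applied to a self-adjoint operator, and then to invoke a spectral approximation argument. Let $Q$ denote the twisted intersection form on $W$ with coefficients in the group von Neumann algebra $\mathcal{N}(\Lambda)$, regarded as a bounded self-adjoint operator on a Hilbert $\mathcal{N}(\Lambda)$-module. By definition $\sigma^{(2)}(W,\phi)$ is the von Neumann trace of $\chi_{(0,\infty)}(Q) - \chi_{(-\infty,0)}(Q)$. Similarly, for each $k$ the intersection form on the finite cover $\widetilde{W_k}$ is a finite-dimensional Hermitian operator $Q_k$, whose ordinary signature equals the ordinary trace of the analogous difference of spectral projections.

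First I would pass to the cellular chain complex of the $\Lambda$-cover of $W$ and observe that the chain complex computing $\sigma(\widetilde{W_k})$ arises by tensoring this complex over $\mathbb{Z}[\Lambda]$ with $\mathbb{C}[\Lambda/\Lambda_k]$. This puts the normalized finite-dimensional signatures and the $L^2$-signature into a common framework, so that the two quantities in \eqref{residually finite formula} differ only in which trace is applied to the spectral projections of essentially the same family of intersection pairings. Second, I would invoke L\"uck's approximation theorem for spectral density functions: for residually finite $\Lambda$ with resolution $\{\Lambda_k\}$, the normalized spectral measures $\frac{1}{|\Lambda/\Lambda_k|}\mu_{Q_k}$ converge weakly to the von Neumann spectral measure $\mu_Q$ of $Q$. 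If $\operatorname{sign}$ were continuous on the spectrum, weak convergence would immediately yield the identity.

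The main obstacle, and the heart of the L\"uck-Schick argument, is that $\operatorname{sign}$ has a jump discontinuity at $0$, which lies in the spectrum whenever the intersection form is degenerate. One must show that the spectral mass of $Q_k$ in a shrinking neighborhood of $0$ is controlled uniformly in $k$, beyond what weak convergence alone guarantees. The technical tool is a Novikov-Shubin-type uniform decay estimate for the normalized eigenvalue-counting functions of $Q_k$ on small intervals $[-\epsilon,\epsilon]$, obtained by sandwiching $\operatorname{sign}$ between continuous approximants and bounding the error against the spectral measures themselves. Combined with an identification of the atom at $0$ with the $L^2$-Betti number contribution, which is controlled by L\"uck's original approximation theorem for $L^2$-Betti numbers, this upgrades weak convergence of spectral measures to convergence of the sign-integrals, producing the identity \eqref{residually finite formula}.
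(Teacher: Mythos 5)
The paper does not actually prove this statement: it is imported as Theorem 0.1 of L\"uck--Schick \cite{approxL2Sign}, and the author even treats it ``essentially \dots as the definition of the $L^2$-signature'' for the residually finite groups occurring in the paper. So there is no in-paper proof to compare against; what you have written is an outline of the external L\"uck--Schick argument. At the level of architecture it is faithful to that argument: both sides of \eqref{residually finite formula} are traces of $\operatorname{sign}$ applied to intersection operators induced from a common chain complex over $\mathbb{Z}[\Lambda]$, weak convergence of the normalized spectral measures handles everything away from $0$, and the entire difficulty is the jump of $\operatorname{sign}$ at $0$.

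However, the step where you claim the uniform control near $0$ is ``obtained by sandwiching $\operatorname{sign}$ between continuous approximants and bounding the error against the spectral measures themselves'' is circular as stated. Weak convergence of $\frac{1}{|\Lambda/\Lambda_k|}\mu_{Q_k}$ to $\mu_Q$ gives no control on $\limsup_k$ of the normalized spectral mass of $[-\epsilon,0)\cup(0,\epsilon]$ as $\epsilon\to 0$; the sandwiching only converts such a uniform smallness estimate into convergence of the sign-integrals, it cannot produce the estimate. The missing input is integrality: because the $Q_k$ are all induced from a fixed matrix over $\mathbb{Z}[\Lambda]$, a Fuglede--Kadison determinant bound (the product of nonzero eigenvalues of an integral matrix is at least $1$) yields the uniform estimate $F_k(\epsilon)-F_k(0)\le C/\lvert\log\epsilon\rvert$ for the normalized spectral density functions. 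This is the same mechanism as in L\"uck's approximation theorem for $L^2$-Betti numbers, but you need it on the whole punctured interval, not merely for the atom at $0$; identifying the atom with the $L^2$-Betti number, as you propose, does not suffice. A second elision: for a $4$-manifold with boundary the intersection form on $H_2$ is degenerate and is not obviously induced from a single integral matrix, so one must first quotient by the radical (pass to the image of $H_2(W)\to H_2(W,\partial W)$) compatibly across the tower of covers; L\"uck and Schick devote substantial work to exactly this reduction.
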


We essentially take Proposition~\ref{LuckSchickThm} as the definition of the $L^2$-signature, since every group that appears in this paper is residually finite.  Most are even finitely generated abelian groups.

Implicit in the above result is the fact that if $\phi:\pi_1(W)\to G$ is a homomorphism to a finite group, then $\sigma^{(2)}(W,\phi) = \displaystyle \dfrac{1}{|G|}\sigma(\widetilde W)$ where $\widetilde W$ is the compact cover corresponding to $\phi$.
The homology of the compact cover $\widetilde{W}$ can be expressed as the twisted homology of $W$ corresponding to the representation to the group of unitary transformations on the complex vector space $\cplx[G]\cong\cplx^{|G|}$:
$$
H_p(\widetilde{W};\cplx) = H_p(W;\cplx[G]).
$$
 
 Since  $G$ is finite, a classical fact of representation theory says that $\cplx[G]$ is a direct sum of representations of $G$.  That is, if 
  $X_G
  $
is the set of all irreducible unitary representations of $G$, then $\C[G]$ is isomorphic as a $\C[G]$-module to $\oplus_{\alpha\in X_G} (V_\alpha)^{\dim(\alpha)}$.  (For example see \cite[Section 6.2 Proposition 10]{Serre77}).  In particular, each $V_\alpha$ is projective and hence flat as a $\cplx[G]$-module.  Thus, 
$$
\begin{array}{rcl}
H_2(\widetilde {W};\C)
&\cong&
H_2(W;\C[G])
\cong
H_2\left(W;\oplus_{\alpha\in X_G} V_\alpha^{\dim(\alpha)}\right)
\\
&\cong&
\oplus_{\alpha\in X_G}H_2(W;V_\alpha)^{\dim(\alpha)}.
\end{array}
$$

Moreover, the intersection form on $H_2(\widetilde{W};\cplx)$ splits as the direct sum of the intersection forms on $H_2(W;V_\alpha)$ and
\begin{equation}
\label{unitary formula}
\sigma(\widetilde{W})=\sum_{\alpha\in X_G}\dim(\alpha)\cdot\sigma(W,\alpha\circ\phi).
\end{equation} 
Between Proposition~\ref{LuckSchickThm} and equation \eref{unitary formula} we see that the unitary signatures of a 4-manifold determine its finite and residually finite $L^2$-signatures:

\begin{proposition*}
Let $W$ be a 4-manifold and $\phi:\pi_1(W)\to \Lambda$  be a homomorphism to a residually finite group with resolution $\{\Lambda_k\}$.  Let $p_k:\Lambda\to \Lambda/\Lambda_k$ be the quotient map.  Let 
$X_k
$ 
be the set of all irreducible unitary representations of $\Lambda/\Lambda_k$.  The $L^2$ signature of $W$ twisted by $\phi$ is given in terms of unitary signatures of $W$:
$$
\sigma^{(2)}(W,\phi) = \lim_{k\to\infty}\dfrac{1}{|\Lambda/\Lambda_k|}
\sum_{\alpha\in X_k}\dim(\alpha)\cdot\sigma(W,\alpha\circ p_k\circ\phi)
.
$$
\end{proposition*}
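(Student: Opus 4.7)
The plan is to combine Proposition~\ref{LuckSchickThm} with formula \eref{unitary formula} directly; the statement is essentially the composition of those two facts, so no genuinely new argument is required, only careful bookkeeping.

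First I would fix the residually finite group $\Lambda$ with resolution $\{\Lambda_k\}$ and note that for each $k$ the composition $\phi_k := p_k\circ\phi:\pi_1(W)\to\Lambda/\Lambda_k$ is a homomorphism to a finite group. Let $\widetilde{W_k}$ denote the finite cover of $W$ classified by $\phi_k$. By Proposition~\ref{LuckSchickThm}, applied to $\phi$ and its resolution, one has
\begin{equation*}
\sigma^{(2)}(W,\phi)=\lim_{k\to\infty}\frac{1}{|\Lambda/\Lambda_k|}\,\sigma(\widetilde{W_k}).
\end{equation*}

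Next I would apply formula \eref{unitary formula} to the finite group $G=\Lambda/\Lambda_k$ and the homomorphism $\phi_k$, whose associated cover is exactly $\widetilde{W_k}$. Recall that the derivation of \eref{unitary formula} used only the decomposition $\mathbb{C}[G]\cong\bigoplus_{\alpha\in X_G}V_\alpha^{\dim(\alpha)}$ together with flatness of each $V_\alpha$ over $\mathbb{C}[G]$, so it yields
\begin{equation*}
\sigma(\widetilde{W_k})=\sum_{\alpha\in X_k}\dim(\alpha)\cdot\sigma(W,\alpha\circ\phi_k)=\sum_{\alpha\in X_k}\dim(\alpha)\cdot\sigma(W,\alpha\circ p_k\circ\phi).
\end{equation*}

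Finally I would substitute this identity into the Lück--Schick limit to obtain
\begin{equation*}
\sigma^{(2)}(W,\phi)=\lim_{k\to\infty}\frac{1}{|\Lambda/\Lambda_k|}\sum_{\alpha\in X_k}\dim(\alpha)\cdot\sigma(W,\alpha\circ p_k\circ\phi),
\end{equation*}
which is the claimed formula. There is no real obstacle here beyond checking that the cover appearing implicitly in Proposition~\ref{LuckSchickThm} is the same one to which \eref{unitary formula} applies; this is immediate since both refer to the cover determined by $\ker(\phi_k)\subseteq\pi_1(W)$. The proposition is thus a direct corollary, and I would present it as a brief remark with the two displayed equations above.
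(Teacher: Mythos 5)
Your proposal is correct and matches the paper's own reasoning exactly: the paper also obtains this proposition by combining Proposition~\ref{LuckSchickThm} with the decomposition of $\sigma(\widetilde{W_k})$ into irreducible unitary summands given in equation~\eref{unitary formula}, treating the result as an immediate corollary rather than giving a separate argument. No further comment is needed.
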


\subsection{Organization of the paper}

Unitary and $L^2$-$\rho$-invariants of a 3-manifold are expressed in terms of bounded 4-manifolds.  In Section~\ref{4 manifold} for any link $L$ with zero pairwise linking numbers we construct a 4-manifold $W$ with $M(L)\subseteq \bdry W$.  We compute the one dimensional unitary signatures for almost all of the representations needed to prove Theorem~\ref{unitary case}.  One of the main difficulties in the completion of the proof is that the Cimasoni-Florens signature function, $\sigma_L$ is not defined on all of $\T^n$.  In Section~\ref{extend} we define the extension $\widehat{\sigma_L}:\T^n\to\Z$  and complete the proof of Theorem~\ref{unitary case}.  

In Section~\ref{L2 Proof} we  compute the $L^2$-signatures of the 4-manifold found in Section~\ref{4 manifold} and prove Theorem~\ref{L2 Rho}.

Let $F$ be a closed, oriented surface.  During the course of our arguments, the unitary and $L^2$-$\rho$-invariants of the 3-manifold $F\times S^1$ become relevant.  In Section~\ref{product rho} we prove that many such $\rho$-invariants vanish.  

\begin{proposition*}[Propositions~\ref{Unitary product} and \ref{L2 product}]
Let $F$ be a closed oriented surface, $\alpha:\pi_1(F\times S^1)\to U(1)$ be a representation with images in the roots of unity, and $\phi:\pi_1(F\times S^1)\to \Gamma$ be a homomorphism to a residually finite group.  Then $\rho(F\times S^1,\alpha) = \rho^{(2)}(F\times S^1, \phi)=0$.
\end{proposition*}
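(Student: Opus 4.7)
The plan is to realize $F\times S^1$ as the boundary of a 4-manifold whose signature and twisted (or $L^2$-)signature both vanish, then invoke Definitions~\ref{definition of unitary rho} and~\ref{defn of rho}.

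First I would take $W := F\times D^2$ with boundary $F\times S^1$. Since $W\simeq F$, the homology $H_2(W;\R)$ is one-dimensional, generated by $[F\times\{0\}]$; this class has self-intersection zero via the pushoff $F\times\{0\}\mapsto F\times\{p\}$ for $p\neq 0$, so $\sigma(W)=0$. When $\alpha(\mu)=1$ (respectively $\phi(\mu)=1$) for the generator $\mu=\{\mathrm{pt}\}\times S^1$ of the $S^1$-factor, the representation factors through $\pi_1(F)$ and hence extends to $\pi_1(W)=\pi_1(F)$. In the unitary case, $H_2(W;V_\alpha)\cong H_2(F;V_\alpha)$; by Poincar\'e duality on the closed surface $F$ this is the module of $\pi_1(F)$-invariants of $V_\alpha$, which vanishes for a nontrivial one-dimensional $\alpha|_{\pi_1(F)}$, while in the trivial case the twisted intersection form coincides with the untwisted zero form. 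Either way $\sigma(W,\alpha)=0$, so $\rho(F\times S^1,\alpha)=\sigma(W,\alpha)-\sigma(W)=0$. In the $L^2$-case, Proposition~\ref{LuckSchickThm} expresses $\sigma^{(2)}(W,\psi)$ as a limit of normalized signatures of finite covers $\widetilde{W_k}$ of $W$; since $\pi_1(W)=\pi_1(F)$, each such cover has the form $\widetilde F_k\times D^2$ for a finite cover $\widetilde F_k\to F$ and has signature zero by the same pushoff, yielding $\rho^{(2)}(F\times S^1,\phi)=0$.

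The principal obstacle is the complementary case in which the representation does not extend to $F\times D^2$. For the unitary case, $\alpha(\mu)=\zeta$ is a primitive $k$-th root of unity, and I would replace $D^2$ by a genus-zero surface $Y$ with $k$ boundary components. Then $F\times Y$ is bounded by $k$ disjoint copies of $F\times S^1$, and because $\pi_1(Y)$ is free on $c_1,\ldots,c_{k-1}$ with outer boundary relation $c_0=c_1\cdots c_{k-1}$, the assignment $c_i\mapsto\zeta$ defines an extension of $\alpha$ across $F\times Y$ precisely because $\zeta^k=1$. A K\"unneth decomposition of the intersection form, together with the observation that the pairing on $H_1(Y)$ vanishes (its generators are disjoint boundary-parallel loops), shows $\sigma(F\times Y)=0$, and the analogous computation with twisted coefficients gives $\sigma(F\times Y,\widetilde\alpha)=0$. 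Additivity of $\rho$ over disjoint unions then forces $k\cdot\rho(F\times S^1,\alpha)=0$.

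The $L^2$-case with $\phi(\mu)$ of finite order in $\Gamma$ is handled by the same $F\times Y$ construction; the case $\phi(\mu)$ of infinite order is reduced to the previous ones via Proposition~\ref{LuckSchickThm}, applied to the finite quotients $\Gamma/\Gamma_k$ in which every element has finite order, together with a limiting argument on the vanishing statements. The main technical step is the K\"unneth analysis of the twisted intersection form on $F\times Y$, in particular identifying the induced pairing on $H_1(F;V_{\alpha_F})\otimes H_1(Y;V_{\alpha_Y})$ and verifying that its signature vanishes.
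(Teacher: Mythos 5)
There are two genuine gaps, and both occur exactly where your sketch defers the work.

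First, in the unitary (and finite-order) case everything rests on the assertion that $\sigma(F\times Y,\widetilde\alpha)=0$. After the K\"unneth reduction the only surviving piece of $H_2(F\times Y;V_{\widetilde\alpha})$ is $H_1(F;V_{\alpha_F})\otimes H_1(Y;V_{\alpha_Y})$, and the intersection form is the tensor product of the two twisted $H_1$-pairings. The factor coming from $Y$ does \emph{not} have vanishing signature in general: your heuristic that the generators of $H_1(Y)$ are disjoint boundary-parallel loops only kills the untwisted pairing; with coefficients twisted by $\zeta\neq 1$ those loops need not even represent classes, and already for the pair of pants the Hermitianized pairing on $H_1(Y;V_{\alpha_Y})\cong\cplx$ is typically nonzero (it is essentially a Tristram--Levine signature of a $(2,2)$-torus link). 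So the vanishing must come from the $F$-factor, i.e.\ from the statement that the Hermitian form $i\lambda$ on $H^1(F;V_{\alpha_F})$ of the \emph{closed} surface has signature zero. That is true, but it is not elementary --- one proves it by Hodge theory/Riemann--Roch (the $(1,0)$ and $(0,1)$ pieces each have dimension $g-1$ and carry opposite signs) or by an equivalent topological argument --- and you give no argument for it. Since $\sigma(F\times Y,\widetilde\alpha)-\sigma(F\times Y)$ equals the sum of the boundary $\rho$-invariants, asserting this vanishing is logically equivalent to the proposition you are proving; it cannot be waved through. The paper avoids this computation entirely: it uses the roots-of-unity hypothesis to find $g$ disjoint primitive curves $\gamma_i$ on $F$ killed by $\alpha$, caps $F$ off with the handlebody $X$ obtained by attaching $2$-handles along the $\gamma_i$, and bounds $F\times S^1$ by $X\times S^1$, which deformation retracts into its boundary so that both the twisted and untwisted forms are identically zero. (Its Step 2 for finite noncyclic targets does use your $F\times P$ construction, but then sidesteps the twisted signature by a covering-space argument that reduces to a finite cyclic cover of $F'\times P$ and hence back to the handlebody case.)

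Second, the reduction of the infinite-order $L^2$ case ``via Proposition~\ref{LuckSchickThm} applied to the finite quotients $\Gamma/\Gamma_k$, together with a limiting argument'' is not a proof. L\"uck--Schick approximates the $L^2$-signature of a \emph{fixed} $4$-manifold $W$ equipped with a map to $\Gamma$; to invoke it you must first produce a $4$-manifold bounded by $F\times S^1$ over which $\phi$ extends, with the boundary group injecting into the target as required by Definition~\ref{defn of rho}. When $\phi(\mu)$ has infinite order no planar surface $Y$ works, and $\rho^{(2)}$ is not in general continuous under passing to finite quotients of the target, so there is no free ``limiting argument'' at the level of $\rho$-invariants alone. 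This is precisely why the paper builds $F\times\Sigma$ for a once-punctured torus $\Sigma$, extends $\phi$ to $G=(H\oplus E)/(h=[a,b])$, and proves the nontrivial Lemma~\ref{finite resolution} that $G$ is residually finite before applying L\"uck--Schick to $F\times\Sigma$. Your outline contains no substitute for that construction.
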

   
In Section~\ref{application} we use Theorem~\ref{L2 Rho} to provide bounds on the obstruction to linear dependence of \cite{MyFirstPaper}.  In doing so we prove the linear independence of the twist knots (Theorem~\ref{linear indep done}.)

\section{ The 4-manifold used to compute $\rho$-invariants of links}\label{4 manifold}
      
      We begin by stating the two main results which allow us to express the unitary $\rho$-invariants of $M(L)$ corresponding to one dimensional representations.

 \begin{lemma}\label{Build W}
For any $n$-component link $L$ with zero pairwise linking, there are closed oriented surfaces $F_1,\dots, F_n$ and a 4-manifold $W$ with $$\bdry W = M(L)\sqcup F_1\times S^1 \sqcup F_2\times S^1\sqcup \dots \sqcup F_n\times S^1$$ 
such that 
\begin{enumerate}
\item\label{H1} The inclusion induced map $H_1(M(L))\to H_1(W)$ is an isomorphism.
\item\label{signature} The classical signature of $W$ vanishes, that is $\sigma(W)=0$.
\item\label{unitary signature} For any $n$-tuple of roots of unity other than $1$, $\omega = (\omega_1,\dots, \omega_n)\in \T^n_\Q\cap \T^n_*$, and the representation $\alpha_\omega:H_1(W)\to U(1)$ sending the meridian of the $i$\th component of $L$ to $\omega_i$, $\sigma(W,\alpha_\omega) = \sigma_L(\omega)$.
\end{enumerate}
 \end{lemma}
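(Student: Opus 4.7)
The plan is a standard bounded-4-manifold construction. Since the pairwise linking numbers of $L$ vanish, I choose Seifert surfaces $F_i \subset S^3$ for the components $L_i$ which are mutually disjoint, by tubing along arcs in $L_j$ wherever a surface crosses another component. I then form $X'$ by attaching $0$-framed $2$-handles to $B^4$ along the components of $L \subset S^3 = \bdry B^4$; this $4$-manifold satisfies $\bdry X' = M(L)$. Each $F_i$, pushed into the interior of $X'$ and capped off by the core of the $2$-handle attached along $L_i$, yields a closed oriented surface $\widehat F_i$ in $\inter(X')$. These are mutually disjoint and, because the framing is zero, each has trivial normal bundle. Set $W = X' \setminus \bigsqcup_i \inter(\widehat F_i \times D^2)$ and take $F_i = \widehat F_i$ in the statement. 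Then $\bdry W = M(L) \sqcup \bigsqcup_i (\widehat F_i \times S^1)$, as required.

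For (1) I use the long exact sequence of the pair $(X', W)$. One has $H_1(X') = 0$, and $H_2(X', W) \cong \bigoplus_i H_2(\widehat F_i \times D^2,\, \widehat F_i \times S^1) \cong \Z^n$ generated by fiber disks. The connecting map $H_2(X') \to H_2(X', W)$ vanishes, because each generator $[\widehat F_j] \in H_2(X')$ lies in $N(\widehat F_j)$ as the zero section of $\widehat F_j \times D^2$ and is already the image of $[\widehat F_j \times \text{pt}] \in H_2(\widehat F_j \times S^1)$. Hence $H_1(W) \cong \Z^n$ is generated by the meridians of the $\widehat F_i$. The cocore of the $2$-handle at $L_i$ is a disk in $X'$ with boundary $\mu_i$ meeting $\widehat F_i$ transversely once; after removing $\inter N(\widehat F_i)$ it becomes an annulus in $W$ showing that $\mu_i$ equals the meridian of $\widehat F_i$ in $H_1(W)$. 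Therefore $H_1(M(L)) \to H_1(W)$ is an isomorphism.

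For (2), the intersection form on $H_2(X') \cong \Z^n$ with basis $\{[\widehat F_i]\}$ is the $0$-framed linking matrix of $L$, which vanishes by hypothesis, so $\sigma(X') = 0$. Each piece $\widehat F_i \times D^2$ has signature zero, and Novikov additivity then gives $\sigma(W) = 0$.

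Item (3) is the principal obstacle. By Definition~\ref{definition of unitary rho} together with $\sigma(W)=0$ and $\dim(\alpha_\omega)=1$,
$$\sigma(W, \alpha_\omega) \; = \; \rho\bigl(M(L),\, \alpha_\omega|_{M(L)}\bigr) \; + \; \sum_i \rho\bigl(\widehat F_i \times S^1,\, \alpha_\omega|_{\widehat F_i \times S^1}\bigr).$$
The boundary terms on the right vanish by the forthcoming Proposition~\ref{Unitary product}. When $\omega = (e^{2\pi i p_1/q}, \dots, e^{2\pi i p_n/q})$ with each $p_j$ coprime to $q$, the Cimasoni--Florens Proposition quoted above immediately gives $\rho(M(L), \alpha_\omega) = \sigma_L(\omega)$, proving (3) in this case. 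For a general $\omega \in \T^n_\Q \cap \T^n_*$ the components $\omega_j$ may have distinct orders, so the Cimasoni--Florens Proposition does not apply verbatim; this is the main hurdle. The plan is to identify the twisted intersection form on $H_2(W; \cplx_{\alpha_\omega})$ directly with the Hermitian matrix defining $\sigma_L(\omega)$, using an explicit basis arising from the Seifert surfaces $F_i$ and the handle decomposition of $W$. Once this matrix-level comparison is in place, the equality $\sigma(W, \alpha_\omega) = \sigma_L(\omega)$ follows uniformly on all of $\T^n_\Q \cap \T^n_*$.
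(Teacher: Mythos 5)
Your construction of $W$ and your proofs of items (1) and (2) essentially match the paper's (the paper makes the pushed-in Seifert surfaces disjoint by placing them at different depths rather than by tubing, but this is immaterial). The problem is item (3), which you correctly identify as the principal obstacle and then do not actually prove. Your argument covers only the case $\omega = (e^{2\pi i p_1/q},\dots,e^{2\pi i p_n/q})$ with a common denominator $q$, via the quoted Cimasoni--Florens proposition on $\rho$-invariants; for general $\omega\in\T^n_\Q\cap\T^n_*$ you offer only ``the plan is to identify the twisted intersection form\dots directly with the Hermitian matrix defining $\sigma_L(\omega)$,'' with no indication of how the matrix-level comparison would go. Since the whole point of the lemma (and of Theorem~\ref{unitary case}) is to handle tuples of roots of unity of unrelated orders, this is a genuine gap, not a routine omission.

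The missing ingredient is that you are using the wrong Cimasoni--Florens input. Theorem~6.1 of \cite{CimFlo} computes the \emph{twisted signature of the surface complement in the four-ball} directly: if $E(F)$ is $B^4$ minus tubular neighborhoods of the disjoint pushed-in Seifert surfaces, then $\sigma(E(F),\alpha_\omega)=\sigma_L(\omega)$ for every $\omega\in\T^n_\Q\cap\T^n_*$, with no common-denominator restriction. Given that, item (3) reduces to showing $\sigma(W,\alpha_\omega)=\sigma(E(F),\alpha_\omega)$, which the paper does by decomposing $W$ as $E(F)$ union $n$ hollow $2$-handles $B^2\times S^1\times[0,1]$ and running the twisted Mayer--Vietoris sequence: the twisted chain complexes of each hollow handle (homotopy equivalent to $S^1$) and of each gluing region $T_i\cong S^1\times S^1\times[0,1]$ are acyclic precisely because $\omega_i\neq 1$, so $H_2(E(F);\C_{\alpha_\omega})\to H_2(W;\C_{\alpha_\omega})$ is an isomorphism of intersection forms. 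This route also avoids any appeal to Proposition~\ref{Unitary product} inside the proof of the lemma; in the paper that proposition is only needed afterwards, to convert item (3) into the statement about $\rho(M(L),\alpha_\omega)$. I would rework your item (3) along these lines rather than trying to compute the twisted intersection form of $W$ from scratch.
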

 
  \begin{proposition}\label{Unitary product}
 For a closed oriented connected surface $F$, and any representation $\alpha:\pi_1(F\times S^1)\to U(1)$ with image in the roots of unity, $\rho(F\times S^1,\alpha)=0$
 \end{proposition}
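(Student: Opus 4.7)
The plan is to exhibit a bounding $4$-manifold $W$ with $\partial W = F \times S^1$, to which $\alpha$ extends, and such that $\sigma(W) = \sigma(W,\alpha) = 0$; Definition~\ref{definition of unitary rho} then forces $\rho(F \times S^1, \alpha) = 0$. Since $\alpha$ takes values in the roots of unity and factors through the finitely generated abelian group $H_1(F \times S^1) = H_1(F) \oplus \Z$, its image is a finite cyclic group, and $\alpha$ decomposes as a product $\alpha_F \cdot \alpha_S$ of characters on $H_1(F)$ and on the $S^1$-factor respectively.

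I will take $W = V_F \times S^1$ for a suitably chosen handlebody $V_F$ with $\partial V_F = F$. Any such handlebody is specified by its meridional Lagrangian $L \subset H_1(F;\Z)$, and $\alpha_F$ extends across $V_F$ precisely when $\alpha_F|_L = 0$. Representing $\alpha_F$ via the symplectic form as the pairing with some class $u \in H_1(F)$, the question reduces to finding an integral Lagrangian containing $u$; this follows from the standard fact that any primitive vector in $\Z^{2g}$ can be extended to a symplectic basis. With $V_F$ so chosen, $\alpha$ extends to $\pi_1(W) = \pi_1(V_F) \times \Z$ via the extension of $\alpha_F$ together with $\alpha_S$ on the $\Z$-factor.

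For the signature computations, $V_F$ deformation retracts onto a wedge of $g$ circles, and K\"unneth gives $H_2(W;\C) \cong H_1(V_F;\C) \otimes H_1(S^1;\C)$, with every class represented by a surface of the form $\gamma \times S^1$. Generic $1$-cycles $\gamma, \gamma'$ in the $3$-manifold $V_F$ are disjoint, so the intersection form on $H_2(W;\C)$ vanishes, giving $\sigma(W) = 0$. For the twisted signature, since $V_\alpha = V_{\alpha_F} \boxtimes V_{\alpha_S}$ is an external tensor product of local systems, twisted K\"unneth yields $H_2(W;V_\alpha) = H_1(V_F;V_{\alpha_F}) \otimes H_1(S^1;V_{\alpha_S})$. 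When $\alpha_S \ne 1$, the operator $\alpha_S(1)-1$ is invertible on $\C$ and $H_1(S^1;V_{\alpha_S}) = 0$, so $H_2(W;V_\alpha) = 0$. When $\alpha_S = 1$, the twisted $H_2$ is spanned by twisted analogs of $\gamma \times S^1$ and the same push-off argument annihilates the twisted intersection form. In either case $\sigma(W,\alpha) = 0$, and the result follows.

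The main obstacle will be verifying the Lagrangian existence so that a handlebody extending $\alpha_F$ can actually be constructed; once this short symplectic-algebra point is settled, the K\"unneth and push-off computations needed to kill both signatures are routine.
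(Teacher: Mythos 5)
Your proposal is correct and follows essentially the same route as the paper: both bound $F\times S^1$ by $(\text{handlebody})\times S^1$, extend $\alpha$ using that a character to a finite cyclic group kills a primitive Lagrangian in $H_1(F;\Z)$, and kill both the untwisted and twisted signatures because the relevant second homology is carried by tori that can be pushed off one another (the paper phrases this as surjectivity of $H_2(\partial(X\times S^1))\to H_2(X\times S^1)$, you via K\"unneth and disjointness of $1$-cycles in a $3$-manifold). The only cosmetic differences are that the paper finds the Lagrangian handle-by-handle inside each $\langle a_i,b_i\rangle$ rather than via Poincar\'e duality (where one should note your $u$ naturally lives in $H_1(F;\Z/N)$ and must be lifted), and that it treats $F=S^2$ separately.
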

 
 We give the proof of Lemma~\ref{Build W} at the end of this section and the proof of Proposition~\ref{Unitary product} in Section~\ref{product rho}.  
 
 Let $L$ be a link with zero pairwise linking and $\alpha:\pi_1(M(L))\to U(1)$ be a 
 representation.  
 Since $U(1)$ is abelian we may as well regard $\alpha$ as a representation of $H_1(M(L))$.  
 By conclusion~\eref{H1} of Lemma~\ref{Build W}, $\alpha$ extends over $W$ to a representation making the following diagram commute.
$$
\begin{diagram}
\node{\pi_1(M(L))}\arrow{s}\arrow{e}\node{H_1(M(L))}\arrow{s,l}{\cong}\arrow{e,t}{\alpha}\node{U(1)}\\
\node{\pi_1(W)}\arrow{e}\node{H_1(W)}\arrow{ne,t}{\alpha}
\end{diagram}
$$
This implies by Definition~\ref{definition of unitary rho} that 
\begin{equation}
\label{unitary rho eqn 1}
\sigma(W,\alpha)-\sigma(W) = \rho(M(L),\alpha) + \sum_{k=1}^n\rho(F_k\times S^1,\alpha).
\end{equation}

 Let $\omega = (\omega_1,\dots, \omega_n)\in\T^n_\Q$ and $\alpha = \alpha_\omega$ be the representation sending the meridian of the $i$\th component of $L$ to $\omega_i$. Conclusion~\eref{signature} of Lemma~\ref{Build W} 
 and Proposition~\ref{Unitary product} imply 
 that $\sigma(W)
 =\rho(F_k\times S^1,\alpha_\omega)
 =0$.  
If $\omega\in \T^n_*$ then by conclusion~\eref{unitary signature} of Lemma~\ref{Build W},  $\sigma(W,\alpha_\omega)= \sigma_L(\omega)$.   Making these substitutions reduces equation~\eref{unitary rho eqn 1} to the conclusion that $\sigma_L(\omega) = \rho(M(L),\alpha_\omega)$ for all $\omega\in \T^n_*\cap \T^n_\Q$.
 
For all $n$-tuples of roots of unity for which the  Cimasoni-Florens signature is  defined we see that it agrees with the unitary $\rho$-invariant of $M(L)$.  We have proven the following result:
 
 \begin{proposition}\label{unitary version take 1}
  If $L=L_1,\dots L_n$ is an $n$-component link with zero pairwise linking, $\omega = (\omega_1,\dots, \omega_n)\in \T^n_\Q$
   and $\alpha_\omega:H_1(M(L))\to U(1)$ sends the meridian of $L_i$ to $\omega_i$  then $\sigma(W,\alpha_\omega) = \rho(M(L),\alpha_\omega)$.  If additionally $\omega\in \T^n_*$, then
$\sigma_L(\omega) = \rho(M(L),\alpha_\omega).$
\end{proposition}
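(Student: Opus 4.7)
The plan is to assemble the three tools already in hand, namely Lemma~\ref{Build W}, Proposition~\ref{Unitary product}, and the Casson--Gordon formulation of $\rho$ (Definition~\ref{definition of unitary rho}), and to observe that they combine almost mechanically.

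First I would invoke Lemma~\ref{Build W} to produce a compact oriented 4-manifold $W$ with $\bdry W = M(L)\sqcup F_1\times S^1\sqcup\cdots\sqcup F_n\times S^1$, satisfying the three listed properties. The first step is to argue that for any $\omega\in \T^n_\Q$ the representation $\alpha_\omega$ extends over $W$. Since $U(1)$ is abelian, $\alpha_\omega$ factors through $H_1(M(L))$, and by conclusion \eref{H1} of Lemma~\ref{Build W} the inclusion $H_1(M(L))\to H_1(W)$ is an isomorphism; composing its inverse with $\alpha_\omega$ gives a one-dimensional unitary representation $\widetilde{\alpha}_\omega$ of $\pi_1(W)$ whose restriction to $\pi_1(M(L))$ is $\alpha_\omega$.

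Next I would apply Definition~\ref{definition of unitary rho} to the bounded 4-manifold $(W,\widetilde{\alpha}_\omega)$, yielding
\begin{equation*}
\sigma(W,\widetilde{\alpha}_\omega)-\sigma(W) \;=\; \rho(M(L),\alpha_\omega)+\sum_{k=1}^n \rho\bigl(F_k\times S^1,\widetilde{\alpha}_\omega|_{F_k\times S^1}\bigr).
\end{equation*}
At this point I would dispose of the unwanted terms: conclusion \eref{signature} of Lemma~\ref{Build W} gives $\sigma(W)=0$, and because $\omega\in\T^n_\Q$ the restriction of $\widetilde{\alpha}_\omega$ to any boundary component has image in the roots of unity, so Proposition~\ref{Unitary product} kills each $\rho(F_k\times S^1,\cdot)$. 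What remains is exactly $\sigma(W,\alpha_\omega)=\rho(M(L),\alpha_\omega)$, proving the first claim.

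For the second claim, under the extra hypothesis $\omega\in\T^n_*$, conclusion \eref{unitary signature} of Lemma~\ref{Build W} identifies $\sigma(W,\alpha_\omega)$ with the Cimasoni--Florens signature $\sigma_L(\omega)$, and combining this with what has just been shown yields $\sigma_L(\omega)=\rho(M(L),\alpha_\omega)$. I do not expect any genuine obstacle here, because all the serious content has been packaged into Lemma~\ref{Build W} and Proposition~\ref{Unitary product}; the proof of this proposition is essentially just a bookkeeping check that the hypotheses of Definition~\ref{definition of unitary rho} are met and that each boundary contribution is either zero by design or exactly the term we want.
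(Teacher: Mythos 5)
Your proposal is correct and follows essentially the same route as the paper: extend $\alpha_\omega$ over the $4$-manifold $W$ of Lemma~\ref{Build W} using the $H_1$ isomorphism, apply Definition~\ref{definition of unitary rho}, and eliminate the $\sigma(W)$ and $\rho(F_k\times S^1,\cdot)$ terms via conclusion~\eref{signature} of the lemma and Proposition~\ref{Unitary product}, then invoke conclusion~\eref{unitary signature} for the case $\omega\in\T^n_*$. Your explicit observation that $\omega\in\T^n_\Q$ guarantees the boundary restrictions have image in the roots of unity (so that Proposition~\ref{Unitary product} applies) is a detail the paper leaves implicit, but there is no substantive difference.
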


This almost completes the proof of Theorem~\ref{unitary case}.  It remains to analyze the case that $\omega$ is an $n$-tuple of roots of unity some of which are equal to $1$.   This is addressed in Section~\ref{extend}.  
 
 We close this section with a proof of Lemma~\ref{Build W}.
 
\begin{proof}[Proof of Lemma~\ref{Build W}]

For any $n$-component link $L = L_1,\dots, L_n$ with zero pairwise linking numbers, there are disjoint compact oriented embedded surfaces in the 4-ball, $F_1,\dots, F_n$ with $\bdry F_i = L_i$.  For example, let $F_i^0$ be a Seifert surface for $L_i$ disjoint from $L_j$ for $j\neq i$.  Notice that for $i\neq j$ $F_i^0\cap F_j^0\subseteq \inter(F_i^0) \cap \inter(F_j^0)$ is contained  in the interiors of $F_i^0$ and $F_j^0$.   Let $F_i$ be given by pushing the interior of $F_i^0$ into the 4-ball so that $\inter(F_i)$ and $\inter(F_j)$ lie at a different depths.  Then $F_i\cap F_j=\emptyset$ for all $i\neq j$ and we have the promised surfaces.

Let $E(F)$ be the 4-manifold given by cutting disjoint tubular neighborhoods of these surfaces out of the 4-ball.  We summarize some properties of $E(F)$.  The first two follow from a Mayer-Vietoris argument.  The third is a result of Cimason-Florens \cite[Theorem 6.1]{CimFlo}.
\begin{enumerate}
\item \label{EF H1}If $E(L)$ is the complement of a neighborhood of $L$ then $E(L)\subseteq \bdry E(F)$ and the inclusion induced map $H_1(E(L))\to H_1(E(F))$ is an isomorphism.
\item \label{EF H2}$H_2(E(L))=0$ so that $\sigma(W)=0$.
\item \label{EF sign}For any $\omega\in \T^n_\Q\cap \T^n_*$, and the representation $\alpha_\omega$ sending the meridian of $L_i$ to $\omega_i$, $\sigma(E(F),\alpha_\omega) = \sigma_L(\omega)$.
\end{enumerate} 

Let $W_0$ be the result of adding a 2-handle to the 4-ball along the zero framing of every component of $L$.  Let $D_i$ be the core of the 2-handle added to $L_i$.  Let $\widehat{F_i}$ be the closed surface given by gluing together $F_i$ and $D_i$.  Since the trivializations of  the normal bundles of each of $D_i$ and $F_i$ induce the zero framing of $L_i$ the surface $\widehat{F_i}$ has trivial normal bundle.  Let $W$ be given by removing disjoint tubular neighborhoods of the surfaces $\widehat{F_1}, \dots, \widehat{F_n}$ from $W_0$.

Notice that $\bdry W_0= M(L)$ is a component of $\bdry W$.  The remaining components are given by $ \widehat{F_i}\times S^1$.  Thus, the boundary of $W$ is as claimed in the lemma.

Alternately $W$ can be described by adding to $E(F)$ 
a total of $n$ 
``hollow 2-handles'' (copies of $B^2\times (B^2-\text{core})\cong B^2\times S^1\times [0,1]$) so that the longitude of each component of $L$ is identified with $\bdry B^2\times \{1\}\times\{1\}$  and the meridian is identified to $\{1\}\times S^1\times\{1\}$ in the corresponding hollow 2-handle.  Since the longitudes are  null-homologous in $E(F)$, it follows that $H_1(W)\cong \Z^n$ is generated by the meridians of the components of $L$.  This proves result \eref{H1} of the lemma.  

By a Mayer-Vietoris argument 
 $H_2(W)$ is free abelian and  has basis given by pushoffs of the components of $\widehat{F_i}$.  Since these are carried by the boundary, the intersection matrix of $W$ is the zero matrix and conclusion \eref{signature} follows.

Let $T_i$ be the subset of $\bdry E(F)$ to which the $i$\th hollow 2-handle is glued.  Notice that $T_i \cong S^1\times S^1\times[0,1]$.  
In order to see the conclusion~\eref{unitary signature} of the Lemma, consider the twisted Mayer-Vietoris exact sequence corresponding to the decomposition of  $W$ as the union of $E(F)$ with $n$ hollow 2-handles.  

 \begin{equation}\label{relate rho diagram}
\begin{array}{c}
\dots \to \iterate{i=1}{n}{\oplus}H_2(T_i;\cplx_{\alpha_\omega})\to 
\\
H_2(E(F);\cplx_{\alpha_\omega})\oplus\left(\iterate{i=1}{n}{\oplus}H_2(\text{hollow 2-handle};\cplx_{\alpha_\omega})\right)
\\ 
\to H_2(W;\cplx_{\alpha_\omega})\to\iterate{i=1}{n}{\oplus}H_1(T_i;\cplx_{\alpha_\omega})\to\dots
\end{array}
\end{equation}

The twisted chain complex for the $i^\text{th}$ hollow 2-handle (which is homotopy equivalent to  $S^1$) is chain homotopy equivalent to 
$$
\begin{diagram}
\node{\cplx}\arrow{e,b}{1-\omega_i}\node{\cplx,}
\end{diagram}
$$
while the twisted chain complex for $T_i$ (which is homotopy equivalent to $S^1\times S^1$) is chain homotopy equivalent to
$$
\begin{diagram}
\dgVERTPAD=7ex 
\dgHORIZPAD=1em 
\node{\cplx}\arrow[2]{e,b}{
\left(\begin{array}{c}0\\1-\omega_i\end{array}\right)
}\node[2]{\cplx^2}\arrow[2]{e,b}{
\left(\begin{array}{cc}1-\omega_i&0\end{array}\right)
}\node[2]{\cplx.}\\
\end{diagram}
$$
Since $\omega_i\neq1$ each of these chain complexes is acyclic and  $H_2(E(F);\C_{\alpha_\omega})\to H_2(W;\C_{\alpha_\omega})$ is an isomorphism.  This implies that $W$ and $E(F)$ have  isomorphic twisted intersection forms and so have the same signature:   $\sigma(W,\alpha_\omega) = \sigma(E(F),\alpha_\omega)$. By \cite[Theorem 6.1]{CimFlo} $\sigma(E(F),\alpha_\omega)=\sigma_L(\omega)$.

\end{proof}

\section{Extending $\sigma_L$ over $\T^n$.}\label{extend}

In this section we provide an extension of the Cimasoni-Florens signature functon.  This extension is given in terms of the  signature of a new link:

\begin{definition}
For any link $L$ let $L^\pm$ be the link gotten by replacing each component $L_i$ of $L$ with two parallel copies of $L_i$ with opposite orientations and zero linking numbers.
  $L_i^+$  denotes the copy with the same orientation as $L_i$ and $L_i^-$  denotes the copy with the opposite orientation.  Order the components of $L^\pm$ by $(L^\pm)_i=L^+_i$ and $(L^\pm)_{n+i}=L^-_i$ for $i\le n$.
\end{definition}

\begin{definition}\label{define extension}
     Let $z:\T^n\to \T^{2n}_*$ be given by $z(\omega)=(z_1(\omega),\dots, z_{2n}(\omega))\in \T^{2n}_*$ where 
     $$
z_i(\omega)=\left\{\begin{array}{cl}
-\omega_i\sqrt{-1} &\text{ if } \omega_i \neq \sqrt{-1}
\\
-\sqrt{-1} &\text{ if }\omega_i = \sqrt{-1}
\end{array}
\right.
\text{ and }
z_{n+i}(\omega)=
\left\{\begin{array}{cl}
\sqrt{-1}&\text{ if } \omega_i \neq -\sqrt{-1}
\\
-1 &\text{ if }\omega_i = \sqrt{-1}.
\end{array}
\right.
$$
for $i=1,\dots n$.  Define
$\widehat{\sigma}_L(\omega) := \sigma_{L^\pm}(z(\omega))$.
\end{definition}

The important properties of $z(\omega)$ are that $z_i(\omega)\cdot z_{n+i}(\omega) = \omega_i$ for $i=1,\dots, n$ and $z_i(\omega)\neq 1$ for all $i=1,\dots, 2n$.

\begin{lemma}\label{zero surgery lemma}
For any $n$ component link $L$ the zero surgery on $L^\pm$ is differmorphic to the zero surgery on the split union $L\cup U$ where $U=U_1\cup\dots\cup U_n$ is the $n$-component unlink.  The diffeomorphism sends the meridian of $L_i^-$ to the meridian of  $U_i$ and sends the meridian of $L_i^+$ to a curve homologous to the sum of the meridian of $L_i$ with the meridian of $U_i$.
\end{lemma}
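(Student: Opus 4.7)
The plan is to realize the claimed diffeomorphism by Kirby calculus on the zero-framed surgery diagram for $L^\pm$.  For each $i$, I will perform a handle slide of the handle on $L_i^-$ over the handle on $L_i^+$, using a band $B_i$ in an annulus $A_i \subset \nu(L_i)$ cobounded by $L_i^-$ and a zero-framed pushoff $(L_i^+)'$ of $L_i^+$.  Because $L_i^+$, $L_i^-$, and the pushoff can be placed as three disjoint parallel copies of $L_i$ at three distinct radii inside $\nu(L_i)\cong S^1\times D^2$, I can arrange $A_i$ (and the band $B_i \subset A_i$) to be disjoint from $L_i^+$ itself and from every $L_j^\pm$ with $j\neq i$.

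After the slide, the new attaching circle $\tilde L_i^-$ is the band sum of $L_i^-$ with $(L_i^+)'$; it bounds the disk $A_i\setminus B_i$, namely the annulus cut along a properly embedded arc.  This disk sits inside $\nu(L_i)$ and is disjoint from $L_i^+$ and from every $L_j^\pm$ with $j\neq i$, so $\tilde L_i^-$ is a split unknot.  Its framing becomes $0+0+2\lnk(L_i^+,L_i^-)=0$, while $L_i^+$ is unaltered.  Performing this for every $i$, the resulting framed link is an isotopic copy of $L\cup U$ with zero framings on every component ($L_i^+$ playing the role of $L_i$ and $\tilde L_i^-$ playing the role of $U_i$).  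Since handle slides preserve the diffeomorphism type of the zero-surgered 3-manifold, this gives $M(L^\pm)\cong M(L\cup U)$.

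To track the meridians I will apply the standard dual-basis formula: if in a handle slide of $h_2$ over $h_1$ the new core satisfies $[e_2']=[e_1]+[e_2]$ in $H_2$, then the new meridians satisfy $\mu_1'=\mu_1-\mu_2$ and $\mu_2'=\mu_2$ in $H_1$ of the boundary.  Under the identification of the post-slide diagram with $L\cup U$ this reads $\mu_i^L=\mu_i^+-\mu_i^-$ and $\mu_i^U=\mu_i^-$, which rearranges to $\mu_i^+\mapsto\mu_i^L+\mu_i^U$ and $\mu_i^-\mapsto\mu_i^U$, as claimed.  The main piece of care I expect to need is orientation bookkeeping: because $L_i^+$ and $L_i^-$ carry opposite orientations on $A_i$, I must orient $B_i$ so that the band sum yields $[\tilde L_i^-]=[L_i^+]+[L_i^-]$ rather than $[L_i^+]-[L_i^-]$; this is precisely what produces the plus sign in $\mu_i^+\mapsto\mu_i^L+\mu_i^U$.
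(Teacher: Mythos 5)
Your proposal is correct and takes essentially the same route as the paper: the paper's proof also slides each $L_i^-$ over the $0$-framing of $L_i^+$, observes that the resulting component bounds a disk disjoint from everything else so the diagram becomes $L\cup U$, and cites Kirby calculus for the diffeomorphism and the meridian identifications. You simply spell out the details the paper leaves to the reference (the choice of annulus and band, the framing computation $0+0+2\lnk(L_i^+,L_i^-)=0$, and the dual-basis bookkeeping giving $\mu_i^+\mapsto\mu_i^L+\mu_i^U$, $\mu_i^-\mapsto\mu_i^U$), all of which check out.
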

\begin{proof}

  By sliding $L_i^-$ over the $0$-framing of $L_i^+$ we replace each $L_i^-$ with a curve which bounds a disk in the complement of all other components.   The resulting link is the split union of $L$ with the unlink, $L\cup U$.  

By Kirby calculus  (See for example \cite[Section 5.1]{KirbyCalc}), $M(L^{\pm})$ is diffeomorphic to $M(L\cup U)$.  Moreover the diffeomorphism sends the meridian of $L_i^-$ to the meridian of $U_i$ and the meridian of 
$L_i^+$ to a curve homologous to the sum of the meridian of $U_i$ with the meridian of $L_i$.

\end{proof}

Since $M(L\cup U)\cong M(L^\pm)$, they must have the same $\rho$-invariants.  Additionally, taking the disjoint union with the unlink does not change $\rho$-invariants.  Thus, the $\rho$-invariants of $M(L)$ agree with the $\rho$-invariants of $M(L^\pm)$.  The following lemma makes this precise.

\begin{lemma}\label{change link}
Let $\omega = (\omega_1,\dots, \omega_n)\in \T^n$.  Then $\rho(M(L),\alpha_\omega) = \rho(M(L^\pm),\alpha_{z(\omega)})$.
\end{lemma}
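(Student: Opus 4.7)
The plan is to push the representation $\alpha_{z(\omega)}$ through the diffeomorphism of Lemma~\ref{zero surgery lemma} and check that the residual contributions vanish. First I would use Lemma~\ref{zero surgery lemma} to identify $M(L^\pm)$ with $M(L\cup U)$, where $U=U_1\cup\dots\cup U_n$ is the unlink split from $L$. Because $U$ is split from $L$ in $S^3$, zero surgery decomposes as a connected sum
\[
M(L\cup U)\;\cong\;M(L)\,\#\,\#^n (S^2\times S^1).
\]
Using the explicit images of meridians given in Lemma~\ref{zero surgery lemma}, together with the defining multiplicative property $z_i(\omega)\cdot z_{n+i}(\omega)=\omega_i$, the representation $\alpha_{z(\omega)}$ pulls back under this identification to a unitary character $\beta$ on $H_1(M(L\cup U))$ whose restriction to the $M(L)$ summand is exactly $\alpha_\omega$ and whose restriction to the $i^{\text{th}}$ $S^2\times S^1$ summand is a character $\gamma_i$ sending the $S^1$-generator to $z_{n+i}(\omega)$.

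Next, I would invoke additivity of the unitary $\rho$-invariant under connected sum. This follows from Definition~\ref{definition of unitary rho} applied to the 4-dimensional 1-handle cobordism from $M(L)\sqcup \bigsqcup_i (S^2\times S^1)$ to their connected sum, on which both the ordinary and twisted intersection forms vanish. It yields
\[
\rho(M(L^\pm),\alpha_{z(\omega)})\;=\;\rho(M(L),\alpha_\omega)\;+\;\sum_{i=1}^n\rho(S^2\times S^1,\gamma_i).
\]
Each $\rho(S^2\times S^1,\gamma_i)$ vanishes by a standard bounding argument: take the bounding 4-manifold $D^3\times S^1$. The inclusion $S^2\times S^1\hookrightarrow D^3\times S^1$ is a $\pi_1$-isomorphism (both groups are $\Z$), so $\gamma_i$ extends over $D^3\times S^1$, and since $D^3\times S^1\simeq S^1$ both its ordinary and twisted second homology vanish. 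Definition~\ref{definition of unitary rho} then gives $\rho(S^2\times S^1,\gamma_i)=0$, and the desired equality follows.

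The main obstacle is the bookkeeping in the first step: confirming that the pullback character $\beta$ takes the precise value $\omega_i$ on $\mu(L_i)$ (rather than $\omega_i^{-1}$ or $-\omega_i$). This is exactly what the specific form of $z$ in Definition~\ref{define extension}, engineered so that $z_i\cdot z_{n+i}=\omega_i$ and each $z_j\neq 1$, is designed to guarantee. Once the orientation conventions in the handle slide of Lemma~\ref{zero surgery lemma} are lined up with the multiplicative conventions for $U(1)$-characters, the remaining ingredients — additivity of $\rho$ under connected sum and the vanishing of $\rho$-invariants of $S^2\times S^1$ — are essentially immediate.
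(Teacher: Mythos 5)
Your proposal is correct and follows essentially the same route as the paper: identify $M(L^\pm)$ with $M(L\cup U)\cong M(L)\,\#\,\#^n(S^2\times S^1)$ via Lemma~\ref{zero surgery lemma}, transport the character using $z_i(\omega)\cdot z_{n+i}(\omega)=\omega_i$ so that its restriction to the $M(L)$ summand is $\alpha_\omega$, and then use additivity under connected sum together with the vanishing of $\rho(S^2\times S^1,\cdot)$ (which the paper also obtains by bounding with $S^1\times B^3$). The orientation bookkeeping you flag is the same point the paper glosses over, and it does not affect the conclusion since the $S^2\times S^1$ contributions vanish for every character.
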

\begin{proof}

Let $\mu_i$ be the meridian of $L_i$, $m_i$ be the meridian of $U_i$ and $\mu_i^+$ and $\mu_i^-$ be the meridians of $L_i^+$ and $L_i^-$ respectively.  Let $\beta:H_1(M(L\cup U))\to U(1)$ be given by 
$$
\beta(\mu_i) =  \omega_i
\text{ and }
\beta(m_i) =  (z_{n+i}(\omega))^{-1}. 
$$ 
Let $\phi: M(L^\pm)\to M(L\cup U)$ be the diffeomorphism of Lemma~\ref{zero surgery lemma} and $\alpha':H_1(M(L^\pm))\to U(1)$ be given by $\alpha' = \beta\circ \phi_*$.  Since $\rho$-invariants are diffeomorphism invariants, it follows that $\rho(M(L^\pm),\alpha') = \rho(M(L\cup U),\beta)$.
Annlysis of this composition reveals 
$$
\begin{array}{rcl}
\alpha'(\mu_i^+) &=& \beta(\phi_*(\mu_i^+)) 
= \beta(\mu_i+m_i) 
= \omega_i / z_{n+i}(\omega) = z_i(\omega)
\\
\alpha'(\mu_i^-) &=& \beta(\phi_*(\mu_i^-)) = \beta(m_i) = z_{n+i}(\omega),
\end{array}
$$
so that $\alpha' = \alpha_{z(\omega)}$.

Notice that $M(L\cup U)$ is diffeomorphic to the connected sum of $M(L)$ with $n$ copies of $S^2\times S^1$ and that the restriction of $\beta$ to the $M(L)$-summand is precisely $\alpha_\omega$.  Since $\rho$-invariants are additive under connected sums of 3-manifolds, 
$\rho(M(L\cup U),\beta)$ 
is given by the sum of $\rho(M(L),\alpha_\omega)$ with a sum of $\rho$-invariants of $S^2\times S^1$.  
$$
\rho(M(L^\pm),\alpha_{z(\omega)})=\rho(M(L\cup U),\beta) = \rho(M(L),\alpha_\omega)+\sum_k\rho(S^2\times S^1,\beta).
$$

 The $\rho$-invariants of $S^2\times S^1$ vanish (for example by Lemma~\ref{unitary case}).  Making this substitution,
$
\rho(M(L^\pm),\alpha_{z(\omega)}) 
= \rho(M(L),\alpha_\omega),
$ as we claimed.

\end{proof}

If $\omega\in \T^n_\Q$, then $z(\omega)\in \T^n_\Q\cap\T^n_*$.  Proposition~\ref{unitary version take 1} applies:
$$
\begin{array}{rcll}
\rho(M(L)),\alpha_\omega) 
&=& 
\rho(M(L^{\pm}), \alpha_{z(\omega)})&\text{By Lemma~\ref{change link}}
\\&=&
\sigma_{L^\pm}(z(\omega))&\text{By Proposition~\ref{unitary version take 1}}
\\&=&
\widehat{\sigma_L}(\omega)&\text{By Definition~\ref{define extension}}.
\end{array}
$$  
Thus, we have proven the first main theorem of this paper:



\begin{theorem}\label{unitary case}
Let $L = L_1,\dots, L_n$ be a link with zero pairwise linking.   
\begin{enumerate} 
\item \label{gives rho} For $\omega\in \T^n_\Q$ and the representation $\alpha_{\omega}:H_1(M(L))\to U(1)$ and sending $\mu_i\mapsto \omega_i$, $\rho(M(L),\alpha_{\omega}) = \widehat{\sigma}_L(\omega)$.
\item \label{agree} $\sigma_L$ and $\sigma_{L^\pm}\circ z_\gamma$ agree on $\T^n_\Q\cap \T^n_*$.
\end{enumerate}
\end{theorem}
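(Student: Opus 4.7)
The plan is to deduce the theorem directly from Proposition~\ref{unitary version take 1} applied to the doubled link $L^\pm$, using the modified representation $\alpha_{z(\omega)}$ in place of $\alpha_\omega$. The whole purpose of Definition~\ref{define extension} is that $z(\omega)\in \T^{2n}_*$ even when $\omega\in \T^n\setminus \T^n_*$, so applying the earlier proposition to $L^\pm$ avoids the case where the Cimasoni--Florens signature is undefined. Hence I would first dispatch part (1), which contains the nontrivial content; part (2) follows at once from chaining equalities.

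For part (1), I would begin by establishing Lemma~\ref{change link}, which asserts
\[
\rho(M(L),\alpha_\omega) \;=\; \rho(M(L^\pm),\alpha_{z(\omega)}).
\]
The strategy is to identify $M(L^\pm)$ with $M(L\cup U)$ via a Kirby-calculus diffeomorphism obtained by sliding each $L_i^-$ over the zero-framed $L_i^+$ (this is Lemma~\ref{zero surgery lemma}); the diffeomorphism sends the meridian $\mu_i^+$ to $\mu_i+m_i$ and $\mu_i^-$ to $m_i$, where $m_i$ is the meridian of the unknotted component $U_i$. Then I choose a representation $\beta$ of $H_1(M(L\cup U))$ with $\beta(\mu_i)=\omega_i$ and $\beta(m_i)=z_{n+i}(\omega)^{-1}$, and use the multiplicative identity $z_i(\omega)\cdot z_{n+i}(\omega)=\omega_i$ (built into Definition~\ref{define extension}) to verify that the transported representation coincides with $\alpha_{z(\omega)}$. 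Because $M(L\cup U)$ is the connected sum of $M(L)$ with $n$ copies of $S^2\times S^1$, and $\rho$-invariants are additive under connected sum while vanishing on $S^2\times S^1$ (itself a consequence of Proposition~\ref{unitary version take 1} applied to the unknot), one obtains the claimed equality of $\rho$-invariants. With Lemma~\ref{change link} in hand, I chain it with Proposition~\ref{unitary version take 1} applied to $L^\pm$ at $z(\omega)\in \T^{2n}_\Q\cap \T^{2n}_*$ and unwind Definition~\ref{define extension}:
\[
\rho(M(L),\alpha_\omega) = \rho(M(L^\pm),\alpha_{z(\omega)}) = \sigma_{L^\pm}(z(\omega)) = \widehat{\sigma}_L(\omega).
\]

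For part (2), note that when $\omega\in \T^n_\Q\cap \T^n_*$ both Proposition~\ref{unitary version take 1} and part (1) compute $\rho(M(L),\alpha_\omega)$: the former gives $\sigma_L(\omega)=\rho(M(L),\alpha_\omega)$, and the latter gives $\rho(M(L),\alpha_\omega)=\widehat{\sigma}_L(\omega)=\sigma_{L^\pm}(z(\omega))$, so the two signature values agree.

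The main obstacle is less a deep one than a careful one: it lies in the bookkeeping of Lemma~\ref{zero surgery lemma} (verifying that the Kirby-calculus identification sends the meridians exactly where claimed) and in engineering the definition of $z$ so that simultaneously $z_i(\omega)\neq 1$ for all $i$ and $z_i(\omega)\cdot z_{n+i}(\omega)=\omega_i$. The awkward case split at $\omega_i=\pm\sqrt{-1}$ in Definition~\ref{define extension} is dictated precisely by these two requirements; once these constraints are in place, the rest is a formal composition of the earlier results.
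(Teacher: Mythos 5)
Your proposal is correct and follows essentially the same route as the paper: Lemma~\ref{zero surgery lemma} identifies $M(L^\pm)$ with $M(L\cup U)$, Lemma~\ref{change link} transports the representation and uses additivity under connected sum plus the vanishing of $\rho$ on $S^2\times S^1$, and the result then follows by chaining with Proposition~\ref{unitary version take 1} applied to $L^\pm$ at $z(\omega)\in\T^{2n}_\Q\cap\T^{2n}_*$ and unwinding Definition~\ref{define extension}. The only cosmetic difference is your justification for the vanishing on $S^2\times S^1$ (via the unknot rather than the paper's Proposition~\ref{Unitary product} with $F=S^2$), which is equally valid.
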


We move on to an analysis of the continuity of $\widehat{\sigma_L}$ generalizing \cite[Theorem 4.1]{CimFlo}.  Recall that $R\subseteq \T^n$ is an \textbf{algebraic subset} if $R$ is the intersection of $\T^n$ with the zero loci of some polynomials.

\begin{proposition}\label{continuity1}
For any link $L$, there is a sequence of real algebraic sets $\T^n = R_0 \supseteq R_1\supseteq \dots \supseteq R_m=\emptyset$ such that $\widehat\sigma_L$ is locally constant on $R_j-R_{j+1}$.
\end{proposition}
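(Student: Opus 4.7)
The plan is to combine the Cimasoni--Florens stratification of $\T^{2n}_*$ with the piecewise polynomial structure of the map $z$ from Definition~\ref{define extension}. By \cite[Theorem 4.1]{CimFlo} applied to the link $L^\pm$, there is a sequence of real algebraic subsets $\T^{2n}_* = S_0 \supseteq S_1 \supseteq \dots \supseteq S_k = \emptyset$ such that $\sigma_{L^\pm}$ is locally constant on each difference $S_j - S_{j+1}$. Since $\widehat{\sigma_L}(\omega) = \sigma_{L^\pm}(z(\omega))$ by Definition~\ref{define extension}, the strategy is to pull this filtration back through $z$.

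The map $z$ is not globally continuous. I would first identify its singular locus
$$D = \bigcup_{i=1}^n \bigl(\{\omega \in \T^n : \omega_i = \sqrt{-1}\} \cup \{\omega \in \T^n : \omega_i = -\sqrt{-1}\}\bigr),$$
which is itself a real algebraic subset of $\T^n$. I would then stratify $\T^n$ by the closed real algebraic subsets $\Delta_{I,\epsilon} = \{\omega \in \T^n : \omega_i = \epsilon_i \sqrt{-1} \text{ for all } i \in I\}$, indexed by $I \subseteq \{1,\ldots,n\}$ and sign choices $\epsilon : I \to \{+,-\}$. The point is that on each locally closed piece $\Delta_{I,\epsilon} \setminus \bigcup_{I' \supsetneq I} \Delta_{I',\epsilon'}$ the map $z$ agrees with a globally defined polynomial map $z_{I,\epsilon} : \Delta_{I,\epsilon} \to \T^{2n}_*$ (obtained by using the ``generic'' formula $z_i = -\sqrt{-1}\,\omega_i$ or $z_{n+i} = \sqrt{-1}$ in the coordinates not forced by $I$). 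Therefore each preimage $z_{I,\epsilon}^{-1}(S_j)$ is a real algebraic subset of $\Delta_{I,\epsilon}$, and hence of $\T^n$.

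With these pieces available, I would build the desired filtration by starting with the finite collection of real algebraic subsets $\{\Delta_{I,\epsilon}\}_{I \ne \emptyset}$ and $\{z_{I,\epsilon}^{-1}(S_j)\}_{j,I,\epsilon}$, and then refining to a decreasing sequence $\T^n = R_0 \supseteq R_1 \supseteq \dots \supseteq R_m = \emptyset$ in such a way that every difference $R_j - R_{j+1}$ is contained in a single locally closed stratum of the $\Delta_{I,\epsilon}$-decomposition and maps under $z_{I,\epsilon}$ into a single $S_i - S_{i+1}$. Since $z$ coincides with the continuous map $z_{I,\epsilon}$ on that stratum, and $\sigma_{L^\pm}$ is locally constant on $S_i - S_{i+1}$, local constancy of $\widehat{\sigma_L} = \sigma_{L^\pm} \circ z$ on $R_j - R_{j+1}$ follows.

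The main obstacle is combinatorial rather than conceptual: the discontinuity of $z$ forces the preimages $z_{I,\epsilon}^{-1}(S_j)$ for different pairs $(I,\epsilon)$ to disagree on the overlaps, so the refinement must be ordered so that each stratum of $\{R_j\}$ sits inside a single piece where $z$ is continuous, while still forming a genuinely decreasing chain of real algebraic subsets. Once the ordering is arranged (for instance, by first peeling off the highest-codimension $\Delta_{I,\epsilon}$ and recording the pulled-back CF stratification on each successive closed subset before moving on), the verification of local constancy on each $R_j - R_{j+1}$ is immediate.
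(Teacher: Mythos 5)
Your proposal is correct and follows the same overall strategy as the paper's proof: apply \cite[Theorem 4.1]{CimFlo} to $L^\pm$ to obtain an algebraic filtration of $\T^{2n}$ on whose successive differences $\sigma_{L^\pm}$ is locally constant, decompose $\T^n$ into locally closed pieces on which $z$ agrees with a globally defined polynomial map, pull the filtration back through those polynomial maps (so the preimages are again real algebraic), and interleave everything into a single decreasing chain. The one genuine difference is the decomposition of $\T^n$: the paper uses the linear chain $S_j=\{\omega_1=\dots=\omega_j=\sqrt{-1}\}$, whereas you index your strata $\Delta_{I,\epsilon}$ by arbitrary subsets $I\subseteq\{1,\dots,n\}$ (and sign choices). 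Your finer decomposition is in fact the more careful one: on the paper's $S_j-S_{j+1}$ the coordinates $\omega_{j+2},\dots,\omega_n$ are unconstrained and may still equal $\sqrt{-1}$, so $z$ is not literally a single polynomial map there, and the subset-indexed strata are exactly what is needed to make the ``piecewise polynomial'' claim true. (With the intended reading of Definition~\ref{define extension} the discontinuities of $z$ occur only where some $\omega_i=\sqrt{-1}$, so the sign choices $\epsilon$ and the locus $\omega_i=-\sqrt{-1}$ are not strictly necessary, but including them is harmless.) The remaining combinatorial step you flag---assembling the sets $\Delta_{I,\epsilon}\cap z_{I,\epsilon}^{-1}(S_j)$ into a nested chain by peeling off codimension levels and, within a fixed level, taking unions over all $I$ of that size---is precisely the paper's $X_{i,j}$ bookkeeping and goes through as you describe, since strata of equal codimension become disjoint and relatively open once the higher-codimension locus is removed, and local constancy can be checked on each such piece separately.
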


\begin{proof}

First notice that a map to $\Z$ being locally constant is the same as that map being continuous.


  Results of Cimasoni-Florens \cite[Theorem 4.1]{CimFlo} provide a sequence of algebraic sets $\T^{2n}=\Sigma_0 \supseteq \dots \supseteq \Sigma_k  = \emptyset$ satisfying that $\sigma_{L^{\pm}}$ is continuous on $\T^{2n}_* \cap (\Sigma_i-\Sigma_{i+1})$.
For $0\le j\le n$ let 
$$
S_j:=\{(\omega_1,\dots, \omega_n)\in \T^n \text{ such that }\omega_1 = \dots = \omega_j=\sqrt{-1}\}
$$
so that $S_0=\T^n$.  Define $S_{n+1}=\emptyset$.
Observe that $z$ is continuous on $S_j - S_{j+1}$.  Indeed, restricted to $S_j - S_{j+1}$ $z$ is given by a polynomial map.  Call this polynomial map $Z_j$.  A composition of continuous maps is continuous so that on 
$
(S_j - S_{j+1})\cap z^{-1}[\Sigma_i-\Sigma_{i+1}]
$
the signature function $\widehat{\sigma_L} = \sigma_{L^\pm}\circ z$ is continuous.

  
Since $z$ and $Z_j$ agree on $(S_j - S_{j+1})$, 
$$
(S_j - S_{j+1})\cap z^{-1}[\Sigma_i] = (S_j - S_{j+1})\cap Z_j^{-1}[\Sigma_i].
$$
  Since $\Sigma_i$ is an algebraic set and $Z_j$ is given by polynomials, $Z_j^{-1}[\Sigma_i]$ is an algebraic set.  

Let $X_{i,j} = (S_{j}\cap Z_j^{-1}[\Sigma_i])\cup S_{j+1}$.  Then 
$$
\begin{array}{rcl}
X_{i,j} - X_{i+1,j} &=& (S_j-S_{j+1})\cap Z_j^{-1}[\Sigma_i] - (S_j-S_{j+1})\cap Z_j^{-1}[\Sigma_{i+1}]
\\&=&
(S_j-S_{j+1})\cap(Z_j^{-1}[\Sigma_i - \Sigma_{i+1}])
\\&=&
(S_j-S_{j+1})\cap(z^{-1}[\Sigma_i - \Sigma_{i+1}])
\end{array}
$$
on which we have already seen that $\widehat{\sigma_L}$ is continuous.  Additionally, since for all $j$, $Z_j^{-1}[\Sigma_k]=\emptyset$ and  $Z_j^{-1}[\Sigma_0]=\T^{n}$, it follows that $X_{k,j} = X_{0,j+1}=S_{j+1}$.
Thus,
$$
\begin{array}{rcccccccccccl}
\T^{2n} &=& X_{0,0}&\supseteq& X_{1,0}&\supseteq&\dots&\supseteq&X_{k-1,0}&\supseteq& X_{k,0}\\
X_{k,0}&=&X_{0,1}&\supseteq& X_{1,1}&\supseteq&\dots&\supseteq&X_{k-1,1}&\supseteq& X_{k,1}
\\
\vdots\\
X_{k,n-1}&=&X_{0,n}&\supseteq& X_{1,n}&\supseteq&\dots&\supseteq&X_{k-1,n}&\supseteq& X_{k,n}&=&\emptyset.
\end{array}
$$
Defining $R_{i+j\cdot k}:=X_{i,j}$ for $0\le i<k$ and $0\le j \le n$ provides the desired descending sequence of  algebraic sets.

\end{proof}

We close with a corollary regarding the integrability of $\widehat\sigma_L$ which will be relevant in the next section.

\begin{corollary}\label{continuity}
Let $L$ be an $n$-component  link with zero pairwise linking numbers and $A = \langle g_1,\dots, g_n| r_1,\dots r_m\rangle$ be an abelian group.  Then  $\widehat{\sigma_L}$ is continuous on $\T_A\subseteq \T^n$ away from an algebraic set of  codimension at least one.  In particular $\widehat{\sigma_L}$ is Riemann integrable on $\T_A$.
\end{corollary}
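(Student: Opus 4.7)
The plan is to deduce the corollary directly from Proposition~\ref{continuity1} by intersecting its stratification of $\T^n$ with $\T_A$. The key structural fact I will use about $\T_A$ is that it decomposes into finitely many connected components, each a smooth coset of $\T^R$ embedded as an irreducible real algebraic subvariety of $\T^n$. I will build a proper algebraic subset $E \subseteq \T_A$ component by component, on whose complement $\widehat{\sigma_L}$ will be locally constant and hence continuous.

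First, apply Proposition~\ref{continuity1} to obtain a descending chain $\T^n = R_0 \supseteq R_1 \supseteq \dots \supseteq R_m = \emptyset$ of real algebraic sets such that $\widehat{\sigma_L}$ is locally constant on each $R_j - R_{j+1}$. For each connected component $C$ of $\T_A$, let $j(C)$ be the smallest index $j \geq 1$ with $C \not\subseteq R_j$; such a $j$ exists because $R_m = \emptyset$, and $j(C) \geq 1$ because $C \subseteq \T^n = R_0$. Then $C \setminus R_{j(C)} \subseteq R_{j(C)-1} \setminus R_{j(C)}$, so Proposition~\ref{continuity1} ensures that the restriction $\widehat{\sigma_L}|_C$ is locally constant on $C \setminus R_{j(C)}$.

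Next, I would observe that $R_{j(C)} \cap C$ is a proper real algebraic subset of $C$. Since $C$ is a smooth connected real algebraic variety (a translate of $\T^R$), it is irreducible, so $R_{j(C)} \cap C$ has codimension at least $1$ in $C$. Setting
\begin{equation*}
E \;=\; \bigcup_{C} \bigl(R_{j(C)} \cap C\bigr),
\end{equation*}
where the union runs over the finitely many components of $\T_A$, gives a real algebraic subset of $\T_A$ of codimension at least one, and by construction $\widehat{\sigma_L}$ is continuous on $\T_A - E$.

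Finally, for Riemann integrability I would invoke compactness of $\T_A$: the function $\widehat{\sigma_L}$ is bounded since the semi-algebraic strata $R_j - R_{j+1}$ have finitely many connected components and $\widehat{\sigma_L}$ is constant on each, and the set of discontinuities lies in $E$ which has $R$-dimensional Hausdorff measure zero. The main (minor) obstacle is the bookkeeping around the choice of $j(C)$: one cannot simply intersect $R_1$ with $\T_A$ because some $R_j$ may swallow an entire component $C$; choosing the first $R_j$ that does not contain $C$ is exactly what cuts out a codimension-one exceptional set inside each component.
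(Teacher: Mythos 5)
Your proof is correct and follows essentially the same route as the paper: the paper also takes the stratification $\{R_j\}$ from Proposition~\ref{continuity1}, picks for each irreducible component $U_j$ of $\T_A$ the first $R_{k_j}$ not containing it, invokes the fact that a proper algebraic subset of an irreducible variety has codimension at least one, and takes the union over components. Your use of connected components in place of irreducible components is immaterial here since the components of $\T_A$ are translates of a subtorus and the two notions coincide, and your extra remarks on boundedness for Riemann integrability only make explicit what the paper leaves implicit.
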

\begin{proof}

We begin by noticing that $\T_A$ is an algebraic set of dimension equal to the rank of $A$.  Let $U_1,\dots, U_a$ be the irreducible components of $\T_A$.  

for each $U_j$, let $k_j$ be the first index for which the set $R_{k_j}$ of Proposition~\ref{continuity1} does not contain $U_j$.  In this case $R_{k_j}\cap U_j$ is a proper algebraic subset of the irreducible algebraic variety $U_j$.  By a basic fact of algebraic geometry, for example \cite[Theorem 1.6.1]{Shafarevich1994},  $R_{k_j}\cap U_j$ is codimension at least one in $U_j$.

Thus $\widehat\sigma_L$ is continuous on $U_j$ away from the algebraic subset of codimension at least one subset, $R_{k_j}\cap U_j$.  This implies that $\widehat\sigma_L$ is continuous on all of $\T_A$ away from the codimension at least one algebraic  subset 
$
\underset{j=1}{\overset{a}\cup}(R_{k_j}\cap U_j).
$
\end{proof}

\section{Computing $L^2$-$\rho$-invariants of zero surgery.}\label{L2 Proof}

Let $L$ be a link with zero pairwise linking and $\phi:\pi_1(M(L))\to A$ be a homomorphism to an abelian group.   In this section we compute  $\rho^{(2)}(M(L),\phi)$.  We prove the following theorem:

\begin{theorem}\label{L2 Rho}
Let $A=\langle g_1\dots g_n|r_1\dots r_m\rangle$ be a rank-$R$ abelian group.  Let $L$ be an $n$-component link with zero pairwise linking numbers.  Let $\phi:\pi_1(M(L))\to A$ be given by sending $\mu_i$ to $g_i$, then 
\begin{equation*}\rho^{(2)}(M(L),\phi)=\frac{1}{\lambda(\T_A)}\int_{\T_A}\widehat\sigma_L(\omega)d\lambda(\omega)\end{equation*}
where $\lambda$ is $R$-dimensional Hausdorff measure on $\T_A$.
\end{theorem}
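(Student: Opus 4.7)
My approach parallels the derivation of Theorem~\ref{unitary case} from Lemma~\ref{Build W}: first reduce $\rho^{(2)}(M(L),\phi)$ to the $L^2$-signature of the 4-manifold $W$ of Lemma~\ref{Build W}, then use L\"uck-Schick to express that $L^2$-signature as a limit of averages of ordinary unitary signatures, and finally recognize the limit as the Hausdorff integral.

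Since $H_1(M(L))\to H_1(W)$ is an isomorphism (Lemma~\ref{Build W}(1)), $\phi$ extends to $\widetilde\phi:\pi_1(W)\to A$, and the cobordism formula of Definition~\ref{defn of rho} applied to $W$ gives
$$\rho^{(2)}(M(L),\phi) + \sum_{k=1}^n \rho^{(2)}(F_k\times S^1,\widetilde\phi|_{F_k\times S^1}) = \sigma^{(2)}(W,\widetilde\phi) - \sigma(W).$$
Using $\sigma(W)=0$ (Lemma~\ref{Build W}(2)) and Proposition~\ref{L2 product}, this collapses to $\rho^{(2)}(M(L),\phi)=\sigma^{(2)}(W,\widetilde\phi)$. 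Now choose a resolution $\{A_k\}$ of the residually finite abelian group $A$. Since every irreducible unitary representation of the finite abelian quotient $A/A_k$ is one-dimensional, Proposition~\ref{LuckSchickThm} combined with equation~\eref{unitary formula} gives
$$\sigma^{(2)}(W,\widetilde\phi) = \lim_{k\to\infty}\frac{1}{|A/A_k|}\sum_{\omega\in S_k}\sigma(W,\alpha_\omega),$$
where $S_k=\{\omega\in\T_A:\omega|_{A_k}=1\}$ is the finite subgroup of $\T_A$ Pontryagin-dual to $A/A_k$. As $S_k\subseteq\T^n_\Q$, the identity $\sigma(W,\alpha_\omega)=\widehat\sigma_L(\omega)$ established inside the proof of Theorem~\ref{unitary case} (via equation~\eref{unitary rho eqn 1}, Proposition~\ref{Unitary product}, and the $L^\pm$ trick of Lemma~\ref{change link}) converts this into
$$\rho^{(2)}(M(L),\phi) = \lim_{k\to\infty}\frac{1}{|S_k|}\sum_{\omega\in S_k}\widehat\sigma_L(\omega).$$

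The remaining and principal obstacle is to identify this limit of discrete averages with the claimed Hausdorff integral. By Pontryagin duality, the condition $\bigcap_k A_k=0$ forces the subgroups $S_k$ to grow in $k$ and to equidistribute in $\T_A$; concretely, the discrete probability measures $\frac{1}{|S_k|}\sum_{\omega\in S_k}\delta_\omega$ converge weak-$*$ to normalized Haar measure on the compact abelian group $\T_A$. The discussion preceding the theorem identifies that Haar measure with $\lambda/\lambda(\T_A)$, and Corollary~\ref{continuity} guarantees that $\widehat\sigma_L$ is bounded on $\T_A$ and continuous off a codimension-one algebraic subset, hence Riemann integrable. Together these ingredients justify the passage
$$\lim_{k\to\infty}\frac{1}{|S_k|}\sum_{\omega\in S_k}\widehat\sigma_L(\omega) \;=\; \frac{1}{\lambda(\T_A)}\int_{\T_A}\widehat\sigma_L(\omega)\,d\lambda(\omega),$$
completing the proof. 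The main technical care is in choosing the resolution $\{A_k\}$ so that the dual sums genuinely Riemann-approximate the integral on each irreducible component of $\T_A$; writing $A\cong\Z^R\oplus T$ with $T$ finite and taking $A_k=(k!\Z)^R\oplus 0$ suffices.
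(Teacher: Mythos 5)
Your proposal is correct and follows essentially the same route as the paper: reduce to $\sigma^{(2)}(W,\phi)$ via Lemma~\ref{Build W} and Proposition~\ref{L2 product}, apply L\"uck--Schick with equation~\eref{unitary formula} to get averages of $\widehat\sigma_L$ over the finite dual subgroups $\T_{A/A_k}$, and pass to the integral using the Riemann integrability supplied by Corollary~\ref{continuity}. The only cosmetic difference is that the paper treats the finite and infinite cases separately, takes the resolution $A_k=t^k A$, and makes the final limit explicit as a Riemann sum over equal-measure cells $U_\omega$ rather than appealing to equidistribution of the dual subgroups, which is the same content since weak-$*$ convergence alone would not suffice for the discontinuous function $\widehat\sigma_L$ --- a point you correctly flag and address with your choice of resolution.
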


\begin{proof}

Let $W$ be the 4-manifold of Lemma~\ref{Build W}.  Since $A$ is abelian we regard $\phi$ as a having domain $H_1(M(L))$.  Since $H_1(M(L))\to H_1(W)$ is an isomorphism, $\phi$ extends over $H_1(W)$ making the following diagram commute
$$
\begin{diagram}
\node{\pi_1(M(L))}\arrow{s}\arrow{e}\node{H_1(M(L))}\arrow{s,l}{\cong}\arrow{e,t}{\phi}\node{A}\\
\node{\pi_1(W)}\arrow{e}\node{H_1(W).}\arrow{ne,t}{\phi}
\end{diagram}
$$
By Definition~\ref{defn of rho}, 
\begin{equation}\label{towards goal 1}
\sigma^{(2)}(W,\phi)-\sigma(W) = \rho^{(2)}(M(L),\phi) - \sum_{k=1}^n\rho^{(2)}(F_k\times S^1,\phi).
\end{equation}

In Section~\ref{product rho} we prove that many $L^2$-$\rho$-invariants of $F_k\times S^1$ vanish.  

\begin{proposition}\label{L2 product}
For a closed oriented surface $F$ and any homomorphism to a residually finite group (for example a finitely generated abelian group) $\phi:\pi_1(F\times S^1)\to H$, $\rho^{(2)}(F\times S^1,\phi)=0$.
\end{proposition}
By claim \eref{signature} of Theorem~\ref{Build W}, $\sigma(W)=0$.  Proposition~\ref{L2 product} now  reduces equation~\eref{towards goal 1}  to the conclusion that  $\sigma^{(2)}(W,\phi) = \rho^{(2)}(M(L),\phi)$.  In order to prove Theorem~\ref{L2 Rho} it suffices to compute the $L^2$-signatures of $W$.  The remainder of the proof falls into one of two cases, depending if $A$ is an infinite or finite abelian group.


\emph{Case 1:  $A$ is finite.}  In this case, 
\begin{equation}\label{finite step 1}\sigma^{(2)}(W,\phi) = \dfrac{1}{|A|}\sigma(\widetilde W)
\end{equation}
 where $\widetilde W$ is the compact cover corresponding to $\phi$.  By equation~\eref{unitary formula}, since $\T_A$ parametrizes all unitary representations of $A$ and $|\T_A| = |A|$:
 \begin{equation}\label{finite step 2}
 \sigma^{(2)}(W,\phi) = \dfrac{1}{|A|}\sigma(\widetilde W) = \dfrac{1}{|\T_A|}\sum_{\omega\in \T_A}\sigma(W,\alpha_\omega)
 \end{equation}

By Proposition~\ref{unitary version take 1}, $\sigma(W,\alpha_\omega) = \rho(M(L),\alpha_{\omega})$.  By Theorem~\ref{unitary case}, for all $\omega\in \T^n_\Q$, and in particular all $\omega\in \T_A$, $\rho(M(L),\alpha_{\omega}) = \widehat{\sigma_L}(\omega)$.  Recall that when $A$ is a finite abelian group, i.e. when the rank of $A$ is $0$,  $\lambda$ is $0$-dimensional Hausdorff measure (counting measure), $\lambda(\T_A) = |\T_A|$ and integration against $\lambda$ is summation.  Making these substitutions proves  that if $A$ is finite then
\begin{equation*}\sigma^{(2)}(W,\phi)=\frac{1}{\lambda(\T_A)}\int_{\T_A}\widehat\sigma_L(\omega)d\lambda(\omega)\end{equation*}
as claimed.

\emph{Case 2:  A is infinite.}  Being a  finitely generated abelian group, $A$ is residually finite.  Indeed, let $t\ge2$ be an annihilator of the torsion subgroup of $A$ and 
 set $A_k:=t^k\cdot A$.  Since $t$ annihilates the torsion subgroup, $A_1$ is contained in the torsion-free part and so is free abelian, $A_1\cong \Z^R$.  Since $\Z^R$ is not divisible as a $\Z$-module, and $A_k=t^{k-1}\cdot A_1$, it follows that $A_1\cap A_2\cap\dots =0$.  $A/A_k$ is finitely generated, abelian and  torsion.  Any such group is finite.  Thus, $A_k$ is finite index in $A$.

 Proposition~\ref{LuckSchickThm} and the proof in the finite case together imply that
\begin{equation}\label{L2CaseAlmostDone}
\displaystyle
\sigma^{(2)}(W,\phi) 
= 
\lim_{k\to\infty}\sigma^{(2)}(W,p_k\circ \phi) 
=
\lim_{k\to\infty}\sum_{\omega\in \T_{A/A_k}}\dfrac{\widehat\sigma_L(\omega)}{|\T_{A/A_k}|}
\end{equation}
where $p_k:A\to A/A_k$ is the quotient map.

It remains to realize the sum on the left hand side of \eref{L2CaseAlmostDone} as an integral.  For each $\omega\in \T_{A/A_k}$ let $U_\omega$ be the collection of points in $\T_A$ at least as close to $\omega$ as to any other point in $\T_{A/A_k}$:
$$
U_{\omega} = \{z\in\T_{A}\text{ such that for all }\omega'\in \T_{A/A_k},  |z-\omega|\le|z-\omega'| \}.
$$
  If $A$ is an infinite group, then 
  for 
  $\omega\neq \omega'$, $U_\omega \cap U_{\omega'}$ is codimension at least 1 in $\T_A$ (and so has zero $R$-dimensional Hausdorff measure).  Notice that multiplication by $\omega^{-1}\omega'$ sends $U_\omega$ to $U_{\omega'}$ so that all of these sets all have the same normalized Hausdorff measure.  By additivity this implies that $\lambda(U_{\omega}) = \dfrac{\lambda(\T_A)}{|\T_{A/A_k}|}$. Making this substitution reduces equation~\eref{L2CaseAlmostDone} to
  \begin{equation}\label{L2CaseAlmostDone2}
  \begin{array}{rcl}
\sigma^{(2)}(W,\phi) 
&=&
 \displaystyle\lim_{k\to\infty}\displaystyle\sum_{\omega\in \T_{A/A_k}}\dfrac{\widehat\sigma_L(\omega)\cdot \lambda(U_\omega)}{\lambda(\T_A)}
\\&=&
\dfrac{1}{\lambda(\T_A)}\displaystyle\lim_{k\to\infty}\displaystyle\sum_{\omega\in \T_{A/A_k}} \widehat\sigma_L(\omega)\cdot \lambda(U_\omega)
\end{array}
\end{equation}
  
  The collection $\{U_\omega\}$ forms a partition of $\T_A$ and
$
\displaystyle
\sum_{\omega\in \T_{A/A_k}} \widehat\sigma_L(\omega)\cdot \lambda(U_\omega)
$
is a Riemann sum for the integral of $\sigma_L$ over $\T_A$ with respect to $\lambda$.  According to Corollary~\ref{continuity}  $\widehat\sigma_L$ is  Reimann integrable on $\T_A$ so that  Riemann sums converge to the integral:  
$$
\sigma^{(2)}(W,\phi)=\dfrac{1}{\lambda(\T_A)}\lim_{k\to\infty}\sum_{\omega\in \T_{A/A_k}} \widehat\sigma_L(\omega)\cdot \lambda(U_\omega)
=
\frac{1}{\lambda(\T_A)}\int_{\T_A}\widehat\sigma_L(\omega)d\lambda(\omega).
$$

Thus, for any finitely generated abelian group $A$,  
$$
\rho^{(2)}(M(L),\phi) = \sigma^{(2)}(W,\phi)
=
\frac{1}{\lambda(\T_A)}\int_{\T_A}\widehat\sigma_L(\omega)d\lambda(\omega).
$$
\end{proof}


\section{The $\rho$ invariants of product 3-manifolds}\label{product rho}

The results of the preceding sections were dependent on Propositions \ref{Unitary product} and \ref{L2 product} which claim that the  $\rho$-invariants of $F\times S^1$ vanish for every closed oriented surface $F$.  In this section we prove these results.

\begin{repproposition}{Unitary product}
 For a closed oriented connected surface $F$, and any representation $\alpha:\pi_1(F\times S^1)\to U(1)$ with image in the roots of unity, $\rho(F\times S^1,\alpha)=0$.
 \end{repproposition}
 
\begin{proof}

In the case $F=S^2$, $\alpha$ extends over $S^1\times B^3$ and, $\rho(S^2\times S^1, \alpha)=\sigma(S^1\times B^3,\alpha)-\sigma(S^1\times B^3)$.  Since $S^1\times B^3$ is a homotopy circle, $\sigma(S^1\times B^3,\alpha) = \sigma(S^1\times B^3)=0$.

Just as in previous proofs, since $U(1)$ is abelian, we regard $\alpha$ as a representation of $H_1(F\times S^1)$.  If $F$ is not the 2-sphere then let $\{a_1,b_1,\dots a_g,b_g\}$ be a symplectic basis for $F$.  Consider the factorization
$$
\Z^2\cong\langle a_i,b_i\rangle\to H_1(F)\to H_1(F\times S^1)\overset{\alpha}{\to} U(1).
$$
Since every finitely generated subgroup of the roots of unity is cyclic, this factors as
$$\Z^2\cong\langle a_i,b_i\rangle \to \Z\to U(1)$$
and we see that there must be a nontrivial primitive linear combination $\alpha_i\cdot a_i+\beta_i\cdot b_i$ which $\alpha$ sends to the identity element.  Since it is primitive, $\alpha_i\cdot a_i+\beta_i\cdot b_i$ is represented by a simple closed curve $\gamma_i$.  Moreover, for $i\neq j$ $\gamma_i$ and $\gamma_j$ do not intersect  and $\gamma_1\cup\dots \cup \gamma_g$ does not separate.  Let $X$ be the 3-dimensional  handle body gotten by adding 2-handles to each $\gamma_i$.  Since $\ker(H_1(F\times S^1)\onto H_1(X\times S^1))\subseteq \ker(\alpha)$,  $\alpha$ extends over $X\times S^1$:
$$
\begin{diagram}
\node{H_1(F\times S^1)}\arrow{e,t}{\alpha}\arrow{s}\node {U(1)}
\\
\node{H_1(X\times S^1)}\arrow{ne,t}{\overline{\alpha}}
\end{diagram}
$$
and $\rho(F\times S^1,\alpha) = \sigma(X\times S^1,\overline{\alpha}) - \sigma(X\times S^1)$.  Since $X\times S^1$ deformation retracts to a subset of $\bdry (X\times S^1)$, the inclusion induced maps $H_2(\bdry (X\times S^1))\to H_2(X\times S^1)$ and $H_2(\bdry (X\times S^1);\cplx_{\overline{\alpha}})\to H_2(X\times S^1;\cplx_{\overline{\alpha}})$ are each surjections so that $\sigma(X\times S^1) = \sigma(X\times S^1,{\overline{\alpha}})=0$.

\end{proof}
 
 \begin{repproposition}{L2 product}
For a closed oriented surface $F$ and any homomorphism to a residually finite group
 $\phi:\pi_1(F\times S^1)\to H$, $\rho^{(2)}(F\times S^1,\phi)=0$.
\end{repproposition}

\begin{proof}[Proof of Proposition~\ref{L2 product}]

The proof proceeds in three steps: 
\begin{itemize}
\item \emph{Step 1:} First we give the proof when $H$ is a  finite cyclic group.  
\item \emph{Step 2:} Next we generalize the proof to any finite group.  
\item \emph{Step 3:} Finally we use the approximation theorem of L\"uck-Schick (Proposition~\ref{LuckSchickThm}) to do the computation in the residually finite case.
\end{itemize}
\emph{Step 1:}
In the case that $H$ is finite cyclic  then similarly to the proof of Proposition~\ref{Unitary product} there is a handle body $X$ bounded by $F$ such that $\phi$ extends to a map $\overline{\phi}:X\times S^1\to H$.  
 Thus, 
 $$\rho^{(2)}(F\times S^1,\phi) = \sigma^{(2)}(X\times S^1,\overline{\phi}) - \sigma(X\times S^1).$$
   Since $\overline{\phi}$ is a homomorphism to a finite group, it induces a compact cover $\widetilde{X\times S^1}$ and $\sigma^{(2)}(X\times S^1,\overline{\phi}) = \dfrac{1}{|H|}\sigma(\widetilde{X\times S^1})$.  Since $X\times S^1$ deformations retracts to a subset of $\bdry (X\times S^1)$, the inclusion induced maps $H_2(\bdry (X\times S^1))\to H_2(X\times S^1)$ and $H_2(\widetilde{\bdry (X\times S^1)})\to H_2(\widetilde{X\times S^1})$ are each surjections so that $\sigma(X\times S^1) = \sigma(\widetilde{X\times S^1})=0$. This completes the proof in the finite cyclic case. 

Before we move on we need a property of $\rho^{(2)}$-invariants.  Namely, for a 3-manifold $M$, groups $A$ and $B$, a homomorphism $\phi:\pi_1(M)\to A$ and a monomorphism $\psi:A\into B$,  $\rho^{(2)}(M,\phi)=\rho^{(2)}(M,\psi\circ\phi)$.  (See \cite[Proposition 5.13]{whitneytowers}.)  Thus, by replacing $H$ with $\im(\phi)$, we may assume that $\phi$ is onto.

Let $s$ denote the generator of $\pi_1(S^1)$ so that $\pi_1(F\times S^1)=\pi_1(F) \oplus \langle s\rangle$.   Let $\phi(s)=h$.  Since $s$ is central in $\pi_1(F\times S^1)$ and $\phi$ is onto, $h$ is central in $H$.  That is, $h$ commutes with every element of $H$.

\emph{Step 1:}  Suppose that $H$ is a finite group.  Then $h^n$ is trivial in $H$ for some $n$.  Let $P$ be the $n$ times punctured sphere.  Then $\pi_1(P) = \langle s_1\dots s_n| s_1s_2\dots s_n=1\rangle$ where each $s_k$ is given by a boundary component of $P$.  Consider the homomorphism 
$$
\overline \phi:\pi_1(F\times P)\cong\pi_1(F)\oplus\langle s_1\dots s_n| s_1s_2\dots s_n=1\rangle\to H
$$
given by $
\overline{\phi}(f)=\phi(f)\text{ if }f\in \pi_1(F)$
and 
$\overline\phi(s_i)=h \text{ for }i=1,\dots, n$.
  Notice that $\bdry(F\times P)=F\times \bdry P$ is given by $n$ copies of $F\times S^1$.  On each of these boundary components, $\overline \phi$ restricts to $\phi$.  Thus, $n\rho(F\times S^1, \phi) = \sigma^{(2)}(F\times P, \overline{\phi})-\sigma(F\times P) $.  By a K\"unneth theorem based argument, the map $H_2(F\times \bdry P)\to H_2(F\times P)$ is surjective and 
\begin{equation}\label{FbyPSign}
\sigma(F\times P)=0.
\end{equation}

Since $H$ is finite, the $L^2$-signature and the signature of the $\overline{\phi}$-cover agree,
$$\sigma^{(2)}(F\times P,\overline{\phi})=\frac{1}{|H|}\sigma(\widetilde{F\times P}_{\overline{\phi}}).$$  In order to compute $\sigma(\widetilde{F\times P}_{\overline{\phi}})$, we first consider the cover of $F\times P$ corresponding to the composition 
$\psi:\pi_1(F\times P)\overset{\overline \phi}{\to} H\to \frac{H}{\langle h \rangle}$.  Again, $h$ is central in $H$, so the cyclic subgroup it generates is normal in $H$.  Since $\psi$ is trivial on $\pi_1(P)$, the corresponding cover is given by the product of a (compact) cover of $F$ with $P$: $\widetilde{F\times P}_\psi = \widetilde{F}_\psi\times P$.  A compact cover of a closed oriented surface is still a closed oriented surface.

Since $\psi$ factors through $\phi$, the covering map corresponding to $\phi$ factors through the covering map corresponding to $\psi$.  Consider the resulting tower of covers.

\begin{equation*}
\begin{diagram}
\node{\widetilde{F\times P}_{\overline \phi}}\arrow[2]{s}\arrow{se,t}{\alpha}\\
\node[2]{\widetilde{F}_\psi\times P}\arrow{sw}\\
\node{F\times P}
\end{diagram}
\end{equation*}
The group of deck translations of $\alpha$ is isomorphic to 
$$
\Gamma:=
\frac{\pi_1(\widetilde{F}_\psi\times P)}{\alpha_*[\pi_1(\widetilde{F\times P}_{\overline \phi}]}) 
\cong 
\frac{\ker(\psi)}{\ker(\overline{\phi})}
\cong
\overline{\phi}[\ker(\psi)]
=
\ker\left(H \to \frac{H}{\langle h\rangle}\right)
=
\langle h\rangle\cong \Z_n.
$$

Thus, $\widetilde{F\times P}_{\overline \phi}$ is a finite cyclic cover of $F' \times P$ where $F':=\widetilde{F}_\psi$.  Let $\phi'$ be the homomorphism from $\pi_1(F'\times P)$ to $\Gamma\cong \Z_n$. 

We summarize what we have shown.  First,
$$n\rho^{(2)}(F\times S^1,\phi)=\sigma^{(2)}(F\times P,\overline{\phi}) - \sigma(F\times P) = \sigma^{(2)}(F\times P,\overline{\phi}).$$
  Since $\overline{\phi}$ maps to the finite group $H$, 
 $$
n\rho^{(2)}(F\times S^1,\phi) = \sigma^{(2)}(F\times P,\overline{\phi}) = \dfrac{1}{|H|}\sigma(\widetilde{F\times P}_{\overline\phi}).
 $$
 Since $\widetilde{F\times P}_{\overline\phi}$ is also the $n$-fold cyclic cover of $F'\times P$ corresponding to the map $\phi'$.
  $$
n\cdot|H|\cdot\rho^{(2)}(F\times S^1,\phi) = \sigma(\widetilde{F\times P}_{\overline{\phi}}) = \sigma(\widetilde{F'\times P}_{{\phi'}}) = n\sigma^{(2)}(F'\times P,\phi').
 $$
 On the other hand, using Definition~\ref{defn of rho} and the result that $\sigma(F'\times P)=0$,
 $$
|H|\rho^{(2)}(F\times S^1,\phi) =  \sigma^{(2)}(F'\times P,\phi') 
= n\rho^{(2)}(F'\times S^1,\phi').
 $$
 Thus, we see that $\rho^{(2)}(F\times S^1,\phi) = \dfrac{n}{|H|}\rho^{(2)}(F'\times S^1,\phi').$  Since $\phi'$ is a map to the finite cyclic group $\Gamma\cong \Z_n$, the first step of the proof gives us that $\rho^{(2)}(F'\times S^1,\phi')$.  This completes the proof in the case that $H$ is a finite group.

\textit{Step 3: }Finally we perform the general proof in the case that $H$ is any residually finite group.

Let $\Sigma$ be a once punctured torus and $\{a, b\}$ be a symplectic basis for $\Sigma$.  Let $E=\pi_1(\Sigma)$ be the free group on $a$ and $b$.  Consider the 4-manifold $F\times \Sigma$ and the group, $$G=\frac{H\oplus E}{h=[a,b]}$$
where $[a,b]=aba^{-1}b^{-1}$.  Denote the obvious map $\pi_1(F\times \Sigma)=\pi_1(F)\oplus E \to G$ by $\overline{\phi}$.  On the boundary, $\overline{\phi}$ factors through $H$.  Since $h$ is central in $H$ it does so injectively.  That is, the following diagram commutes
$$
\begin{diagram}
\node{\pi_1(F\times S^1)} \arrow{e,t}{\phi} \arrow{s} \node{H}\arrow{s,J}\\
\node{\pi_1(F\times \Sigma)}\arrow{e,t}{\overline\phi}\node{G}
\end{diagram}
$$
  and $\rho^{(2)}(F\times S^1,\phi)=\sigma^{(2)}(F\times \Sigma,\overline{\phi})-\sigma(F\times \Sigma)$.

We hope to use the behavior of residually finite $L^2$-signatures to compute $\sigma^{(2)}(F\times \Sigma,\overline\phi)$.  We must show that $G$ is residually finite.

\begin{lemma}\label{finite resolution}
Let $E=\langle a,b|\rangle$ be the rank two free group with generators $a$ and $b$.  Let $H$ be any residually finite group.  Let $h$ be an element of the center of $H$. 
  Then 
$G=\dfrac{H\oplus E}{h=[a,b]}$
 is residually finite.
\end{lemma}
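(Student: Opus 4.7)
The plan is to present $G$ as an extension of $\mathbb{Z}^2$ by the image of $H$, and then separate non-trivial elements using explicit finite quotients. The abelianization of $E$ induces a natural surjection $\pi\colon G\twoheadrightarrow \mathbb{Z}^2$, well-defined because both $h$ and $[a,b]$ map to $0$. Since $h$ is central in $G$, every conjugate of $[a,b]=h$ in $G$ equals $h$ itself, so the image of $[E,E]$ in $G$ lies in $\langle h\rangle\subseteq H$; combined with the fact that $H$ and $E$ commute in $G$, this gives $\ker(\pi)=\widetilde H$, the image of $H$ in $G$.

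For a non-trivial $g\in G$, if $\pi(g)\neq 0$ then composing with $\mathbb{Z}^2\twoheadrightarrow (\mathbb{Z}/N)^2$ for a suitable $N$ separates $g$ in a finite quotient. Otherwise $g=\widetilde x$ for some $x\in H$, necessarily non-trivial in $H$. By residual finiteness of $H$, pick a finite quotient $q\colon H\twoheadrightarrow\bar H$ with $q(x)\neq 1$, and set $\bar h:=q(h)\in Z(\bar H)$, of some order $k\geq 1$.

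To extend $q$ to a finite quotient of $G$, I would combine $\bar H$ with a finite Heisenberg-type group. Let
\begin{equation*}
\Gamma_k:=\langle \alpha,\beta \mid \alpha^k,\ \beta^k,\ [[\alpha,\beta],\alpha],\ [[\alpha,\beta],\beta]\rangle,
\end{equation*}
a nilpotent class-two group of order $k^3$ in which $c:=[\alpha,\beta]$ is central of order exactly $k$. Form the central product
\begin{equation*}
Q:=(\bar H\times \Gamma_k)/\langle (\bar h,\, c^{-1})\rangle,
\end{equation*}
a finite group of order $|\bar H|\cdot k^2$ (the subgroup quotiented is cyclic of order $k$ and central, since $\bar h\in Z(\bar H)$ and $c\in Z(\Gamma_k)$). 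The assignments $y\mapsto (q(y),1)$ for $y\in H$ together with $a\mapsto (1,\alpha)$ and $b\mapsto (1,\beta)$ descend to a well-defined homomorphism $\rho\colon G\to Q$: the images of $H$ and $E$ commute in $Q$ because they come from distinct factors, and the relation $h=[a,b]$ is respected because $(\bar h,1)=(1,c)$ in $Q$ by construction. A direct check of the kernel of $\bar H\times \Gamma_k \twoheadrightarrow Q$ shows that $\bar H\hookrightarrow Q$ embeds, so $\rho(g)=q(x)\neq 1$.

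The main technical point is matching the two ``centralities'': the centrality of $\bar h$ in $\bar H$ (needed for $\langle (\bar h,c^{-1})\rangle$ to be normal and hence the central product $Q$ to make sense) and the centrality of $c$ in $\Gamma_k$ (automatic from the presentation). The hypothesis that $h$ is central in $H$ is exactly what makes the first available; without it, the construction would fail, and indeed residual finiteness would no longer be expected in this generality.
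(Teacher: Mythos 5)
Your proof is correct and follows essentially the same route as the paper's: both split into the case where $g$ survives in the abelianization $\mathbb{Z}^2$ of the free factor and the case where $g$ lies in the image of $H$, handling the latter by mapping $G$ onto the central product of a finite quotient $\overline{H}$ of $H$ with the mod-$k$ Heisenberg group, amalgamated along $\bar h=[\alpha,\beta]$. The paper merely phrases this by first replacing $E$ with the integral Heisenberg group and using its matrix model, so that its finite quotient $(\overline{H}\oplus E_n)/(f(h)=\phi_n([a,b]))$ is exactly your $Q$.
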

\begin{proof}

We begin by noticing that in $G$, both $a$ and $b$ commute with $h=[a,b]$, so that $G\cong \dfrac{H\oplus E'}{h=[a,b]}$ where $E'=\langle a,b|[a,[a,b]],[b,[a,b]]\rangle$ is isomorphic to the 3-dimensional integral Heisenberg group.   That is, the multiplicative group of $3\times 3$ upper triangular integral matrices with ones on the main diagonal,
\begin{equation*}
E'\cong\left\{\left(\begin{array}{ccc}1&\alpha&\gamma\\0&1&\beta\\0&0&1\end{array}\right):\alpha,\beta,\gamma\in \Z\right\}\le Gl(3,\Z).
\end{equation*}

Let $e(\alpha,\beta,\gamma) = \left(\begin{array}{ccc}1&\alpha&\gamma\\0&1&\beta\\0&0&1\end{array}\right)$.  The isomorphism is given by  $a\mapsto e(1,0,0)$, $b\mapsto e(0,1,0)$.   For any integer $n$, let $\phi_n:E'\to E_n\le Gl(3,\Z_n)$ be the homomorphism given by reduction of entries mod $n$.  In $E_n$, $\phi_n([a,b])$ is order $n$.  

Recall that $G$ being residually finite is equivalent to the condition that for every $g\neq 1$ in $G$ there is a homomorphism to a finite group $f:G\to G'$ with $f(g)\neq 1$.

Let $g$ be a nontrivial element of $G$.  Then $g$ is represented by an equivalence class $(x\oplus y)$, $x\in H$, $y\in E'$.  If $y$ does not sit in the cyclic subgroup generated by $[a,b]$ in $E'$ (which is normal), then $(x\oplus y)$ is nonzero in the quotient, $q:G\to \frac{E'}{([a,b])}\cong \Z^2$, whose target is residually finite.  In some finite quotient of $\Z^2$, $q(g)=q(x\oplus y)$ is nonzero.

Alternately, if $y$ sits in the cyclic subgroup generated by $[a,b]$, then for some $m\in \Z$,  $g=(x\oplus [a,b]^m)=(xh^m\oplus 1)$ in $G$.  If $(x\oplus y)$ is nontrivial, then $x'=xh^m$ is nontrivial in $H$, which by assumption is residually finite.  Let $f:H\to \overline{H}$ be a homomorphism to a finite group with $f(x')\neq 1$.  Let $n$ be the order of $f(h)$ in this group.  The direct sum $f\oplus \phi_n:H\oplus E'\to \overline{H}\oplus E_n$ passes to a homomorphism $F:G\to \frac{\overline{H}\oplus E_n}{f(h)=\phi_n([a,b])}$.  

Consider the resulting commutative diagram
\begin{equation*}
\begin{diagram}
\node{H}\arrow{e}\arrow{s,l}{f}\node{H\oplus E'}\arrow{s,l}{f\oplus \phi_n}\arrow{e}\node{\frac{H\oplus E'}{h=[a,b]}}\arrow{s,l}{F}\\
\node{\overline{H}}\arrow{e}\node{\overline{H}\oplus E_n}\arrow{e}\node{\frac{\overline{H}\oplus E_n}{f(h)=\phi_n([a,b])}}
\end{diagram}
\end{equation*}

Since $h$ is central in $H$, $f(h)$ is central in $\overline{H}$.  Since $f(h)$ and $\phi_n([a,b])$ are each order $n$, the composition along the bottom row is injective.  By assumption, $f(x')\neq 1$,  We conclude that $F(g)=F(x'\oplus1)\neq 1$.  

Thus, any nontrivial element of $G$  is nontrivial in some finite quotient and $G$ is residually finite.  This completes the proof.

\end{proof}

So, $G$ is residually finite.  Let $G\ge G_1\ge G_2\ge \dots$ be a resolution for $G$ by finite index subgroups and $p_k:G\to G/G_k$ be the quotient map. Thus,
$$
\begin{array}{rll}
\rho^{(2)}(F\times S^1, \phi)&=\sigma^{(2)}(F\times \Sigma,\overline{\phi})-\sigma(F\times \Sigma)&\text{(Definition~\ref{defn of rho})}
\\
&=\underset{k\to \infty}{\lim} 
\sigma^{(2)}(F\times \Sigma,p_k\circ \overline{\phi})-\sigma(F\times \Sigma)
&\text{(Proposition~\ref{LuckSchickThm})}
\\
&= \underset{k\to \infty}{\lim} \rho^{(2)}(F\times S^1,p_k\circ \overline{\phi})&\text{(Definition~\ref{defn of rho}).}
\end{array}
$$
Since $p_k\circ \overline{\phi}:\pi_1(F\times S^1)\to G/G_k$ is a homomorphism to a finite group the second step of the proof applies to show $\rho^{(2)}(F\times S^1,p_k\circ \overline{\phi})=0$.  
This completes the proof.


\end{proof}

\section{Controlling the effect of local moves on the integral of the signature function.}


For the remainder of the paper, we study $\rho^0$, the $\rho$-invariant of $M(L)$ corresponding to abelianization.  To be precise, if $L$ is an $n$-component link with zero pairwise linking and $\phi:\pi_1(M(L))\to \Z^n$ is the abelianization homomorphism then $\rho^0(L):=\rho^{(2)}(M(L),\phi)$.  By Theorem~\ref{L2 Rho}, since $\widehat{\sigma_L}$ agrees with $\sigma_L$ away from a measure zero set,  $\rho^0(L)$ is given by the integral of the Cimasoni-Florens signature of $L$ over $\T^n_*$.  This section is devoted to controlling how  local moves can change this integral.

In \cite[Section 5]{CimFlo}, Cimasoni and Florens demonstrate the effect that crossing change and smoothing (see Figure~\ref{fig:smoothing}) have on the signature function.  We make use of these results in order to ease the approximation of the $\rho^0$-invariant.  Observe that for a link with non-zero linking numbers or a colored link with multiple components of a single color, we have demonstrated no relationship between the Cimasoni-Florens signature function and $\rho$-invariants.  Since we will need to discuss the integral of the Cimasoni-Florens signature of such links if we are to make use of the moves in Figure~\ref{fig:smoothing} we make up some notation.  For an arbitrary $n$-colored link $L$, the \textbf{$R$-invariant} of $L$, $R(L)$ is defined as the integral of ${\sigma_L}$ over the $n$-dimensional unit torus with respect to normalized  measure:
$$
R(L):=\frac{1}{(2\pi)^n}\int_{\T^n_*}{\sigma_L}(\omega)d\lambda(\omega)
$$

\begin{figure}[h!]
\setlength{\unitlength}{1pt}
\begin{picture}(340,60)
\put(0,10){\includegraphics[width=.15\textwidth]{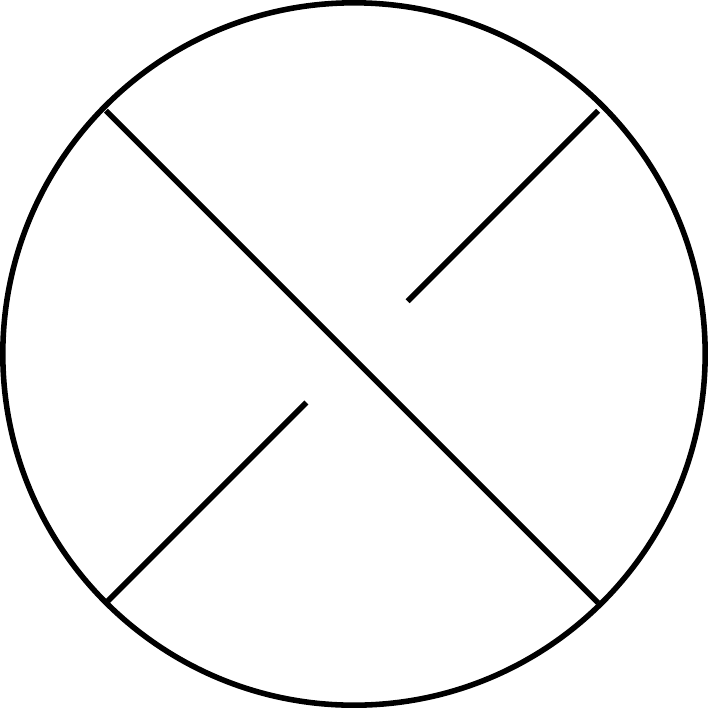}}
\put(24,0){L}
\put(54,35){$\underset{\text{smoothing}}{\longleftrightarrow}$}
\put(93,10){\includegraphics[width=.15\textwidth]{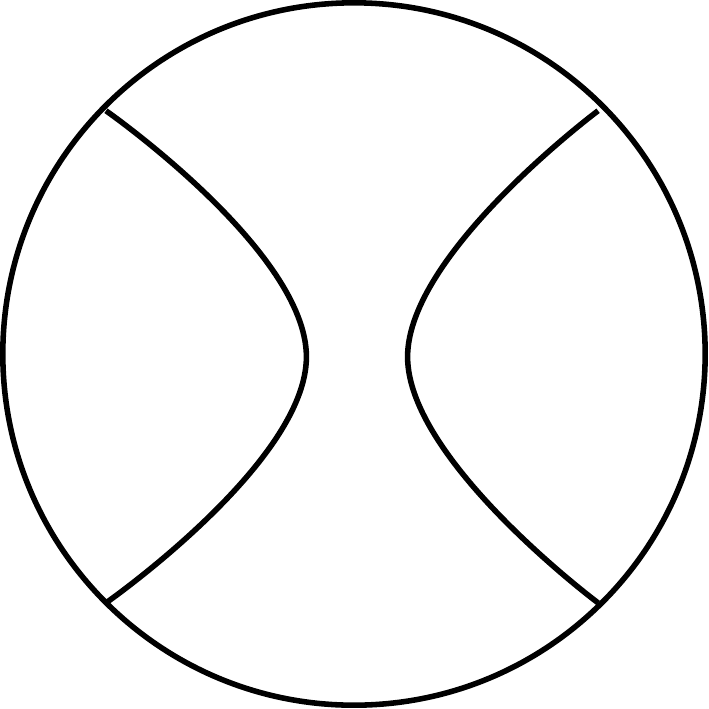}}
\put(117,0){L'}
\put(175,10){\includegraphics[width=.15\textwidth]{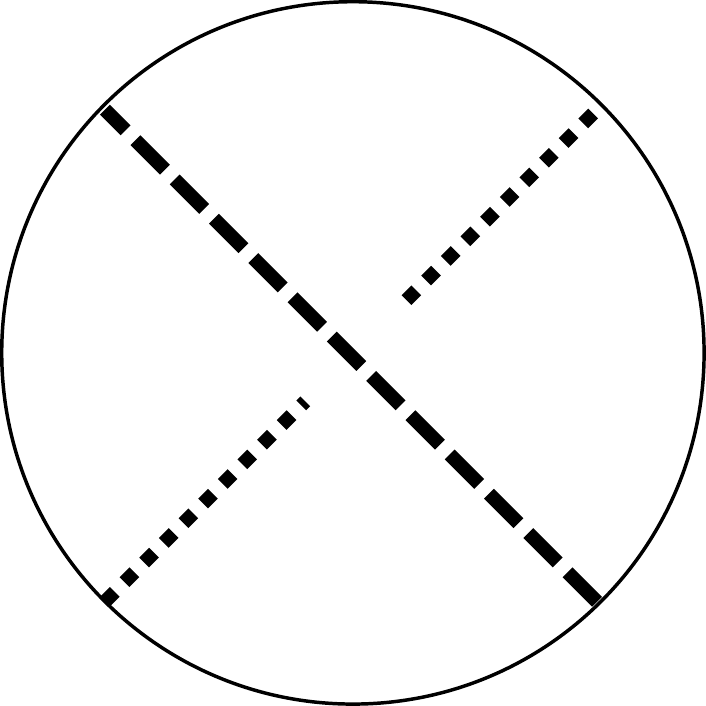}}
\put(199,0){L}
\put(229,35){$\underset{\text{Crossing change}}{\longleftrightarrow}$}
\put(288,10){\includegraphics[width=.15\textwidth]{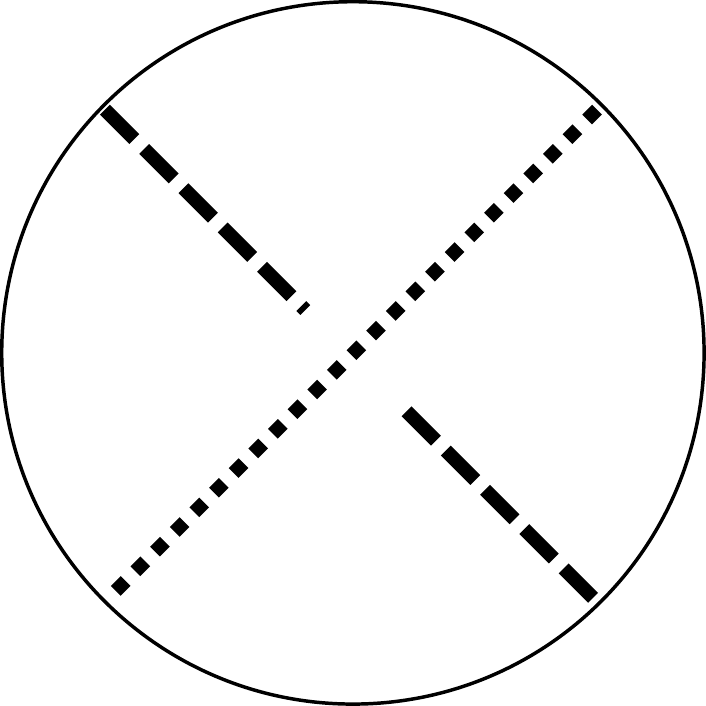}}
\put(312,0){L'}
\end{picture}
\caption{Moves which change the Cimasoni-Florens signature by $-1$, $0$, or $1$.  The move on the left is understood to preserve orientations and be between arcs of the same color.  The move of the right is understood to be between arcs of different colors.}\label{fig:smoothing}
\end{figure}

Cimasoni and Florens in \cite[Proposition 5.1]{CimFlo} show that if colored links $L$ and $L'$ differ by either of the the moves indicated in Figure~\ref{fig:smoothing} then for all $\omega$, $\sigma_L(\omega)$ and $\sigma_{L'}(\omega)$ differ by $0$ or $1$ or $-1$ depending on the Conway potential function of $L$ and $L'$.  Integrating this result gives that $R(L)$ and $R(L')$ differ by at most $1$.  

\begin{figure}[h!]
\setlength{\unitlength}{1pt}
\begin{picture}(310,60)
\put(0,10){\includegraphics[width=.15\textwidth]{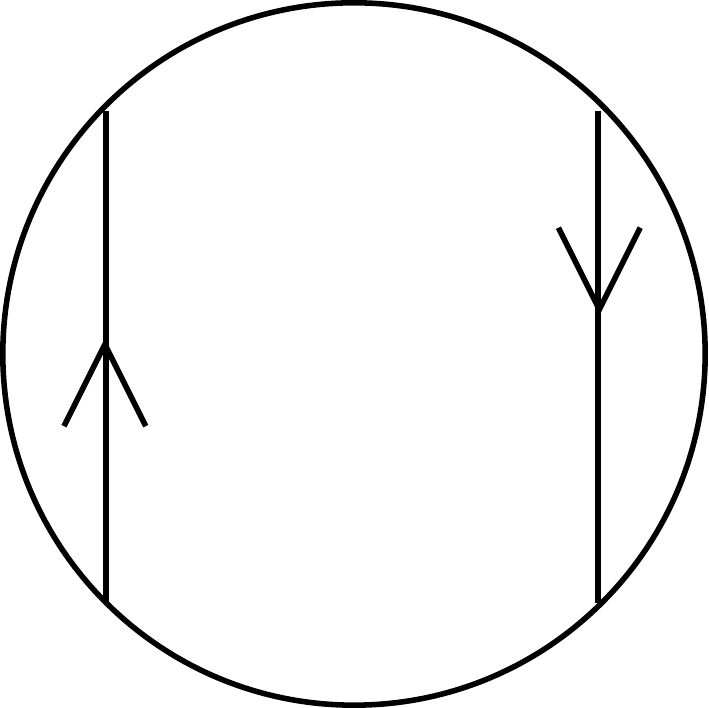}}
\put(24,0){L}
\put(60,35){$\cong$}
\put(75,10){\includegraphics[width=.15\textwidth]{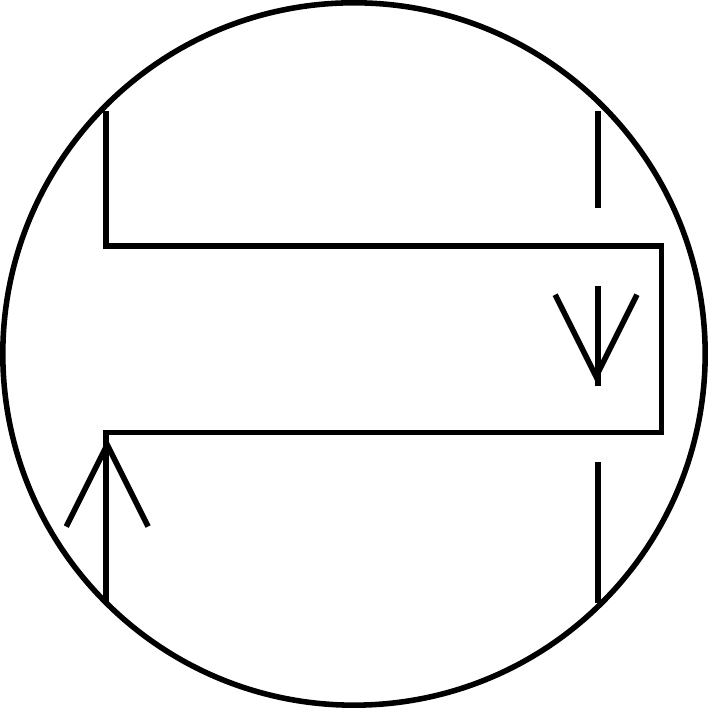}}
\put(130,35){$\underset{\text{smoothing}}{\longleftrightarrow}$}
\put(170,10){\includegraphics[width=.15\textwidth]{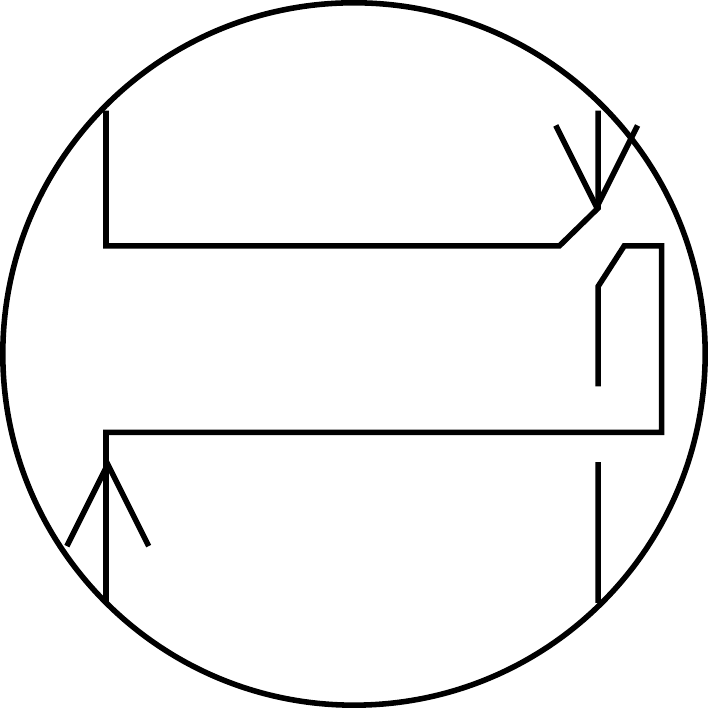}}
\put(230,35){${\cong}$}
\put(245,10){\includegraphics[width=.15\textwidth]{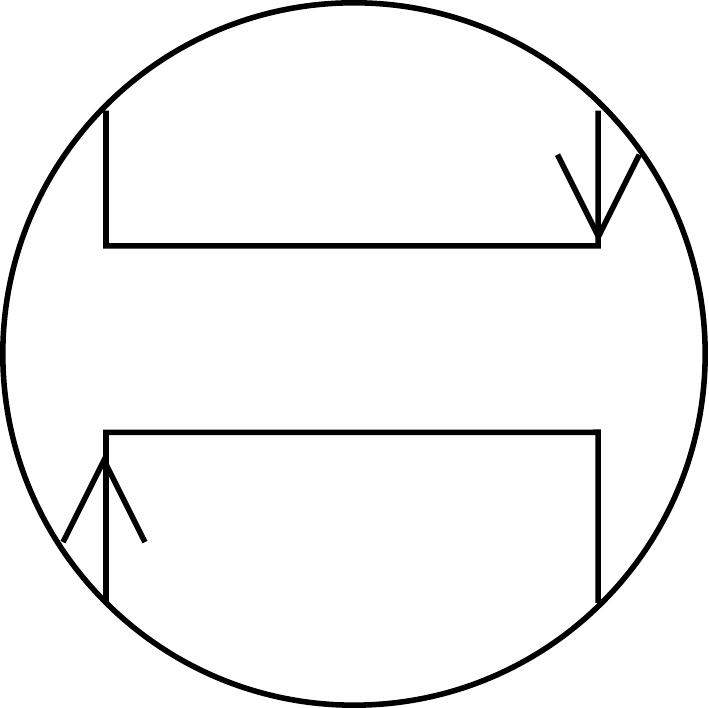}}
\put(269,0){L'}
\end{picture}
\caption{A band addition can be realized via a smoothing.}\label{fig:band move}
\end{figure}

The addition of a band between two arcs of the same color (as in Figure~\ref{fig:band move}) can be realized by a single smoothing.  Thus, if $L$ and $L'$ differ by either a band addition or a crossing change between arcs of the same color, then $R(L)$ and $R(L')$ differ by at most $1$.  Additionally, \cite[Proposition 2.12]{CimFlo} shows that if this band runs between split sublinks of $L$, then $\sigma_L=\sigma_{L'}$.  Summarizing these results,

\begin{proposition}\label{local moves}
If $L$ and $L'$ differ by any of the local moves of Figures~\ref{fig:smoothing} of \ref{fig:band move} then $|R(L)-R(L')|\le 1$.  If the band in Figure~\ref{fig:band move} runs between split sublinks, then $R(L)=R(L')$
\end{proposition}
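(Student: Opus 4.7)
The plan is to derive both conclusions directly from the cited pointwise results of Cimasoni-Florens by integrating over $\T^n_*$. First, for the crossing change or smoothing move of Figure~\ref{fig:smoothing}, \cite[Proposition 5.1]{CimFlo} already gives the pointwise bound $|\sigma_L(\omega)-\sigma_{L'}(\omega)|\le 1$ for every $\omega\in\T^n_*$ (the value is $0$, $+1$, or $-1$ depending on a Conway potential sign). Integrating this inequality and dividing by the total measure gives
\begin{equation*}
|R(L)-R(L')| \;\le\; \frac{1}{(2\pi)^n}\int_{\T^n_*}|\sigma_L(\omega)-\sigma_{L'}(\omega)|\,d\lambda(\omega) \;\le\; \frac{1}{(2\pi)^n}\cdot\lambda(\T^n_*) \;=\; 1,
\end{equation*}
since the deleted set $\T^n\setminus\T^n_*$ has measure zero.

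Next I would handle the band move of Figure~\ref{fig:band move}. The figure itself exhibits $L$ and $L'$ as ambient isotopic (via the two outer pictures) to a pair of diagrams related by a single smoothing between arcs of the same color. Since ambient isotopy does not alter the Cimasoni-Florens signature function pointwise, the estimate of the previous paragraph transfers verbatim to give $|R(L)-R(L')|\le 1$ for the band move as well. Thus the first statement of the proposition follows in all cases from a single application of the integrated pointwise estimate.

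Finally, in the special situation where the band runs between two split sublinks, I would invoke the stronger pointwise equality $\sigma_L(\omega)=\sigma_{L'}(\omega)$ for all $\omega\in\T^n_*$ from \cite[Proposition 2.12]{CimFlo}; integrating a pointwise equality is immediate and yields $R(L)=R(L')$. There is no real technical obstacle: the entire proof is a routine passage from the known pointwise bounds on $\sigma_L$ to an averaged bound on $R(L)$, and the only step that requires any interpretation is reading Figure~\ref{fig:band move} as expressing a band addition as an isotopy-plus-smoothing, after which the first part of \cite[Proposition 5.1]{CimFlo} applies directly.
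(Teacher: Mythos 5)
Your proposal is correct and follows essentially the same route as the paper: integrate the pointwise bound of \cite[Proposition 5.1]{CimFlo} over $\T^n_*$, realize the band addition of Figure~\ref{fig:band move} as an isotopy followed by a single smoothing, and invoke \cite[Proposition 2.12]{CimFlo} for the split-sublink case. No gaps.
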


The local move on colored links depicted in Figure~\ref{fig:band twist} will be relevant in the next section.  We provide bounds on how it changes the $R$-invariant.

\begin{figure}[h!]
\setlength{\unitlength}{1pt}
\begin{picture}(230,55)
\put(0,0){\includegraphics[height=.15\textwidth]{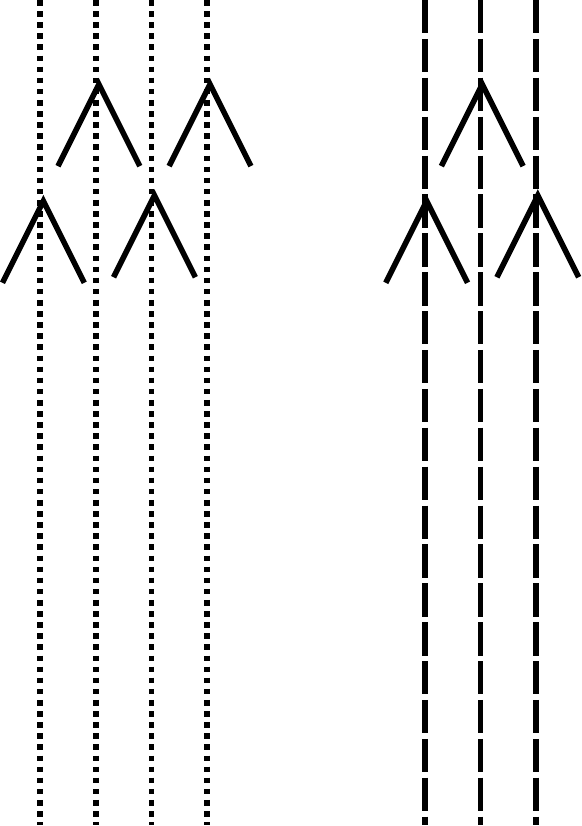}}
\put(43,25){$\leftrightarrow$}
\put(60,0){\includegraphics[height=.15\textwidth]{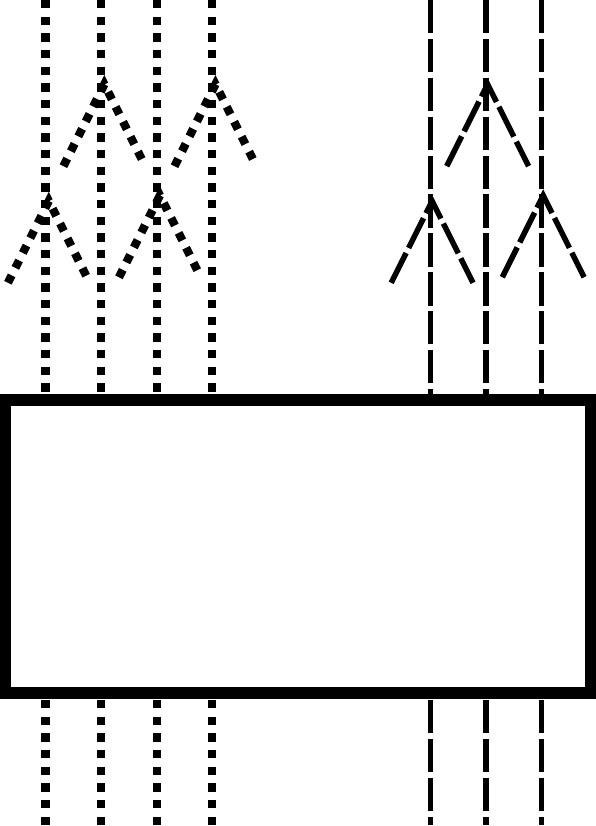}}
\put(75,15){$B$}
\put(103,25){$:=$}
\put(120,0){\includegraphics[height=.15\textwidth]{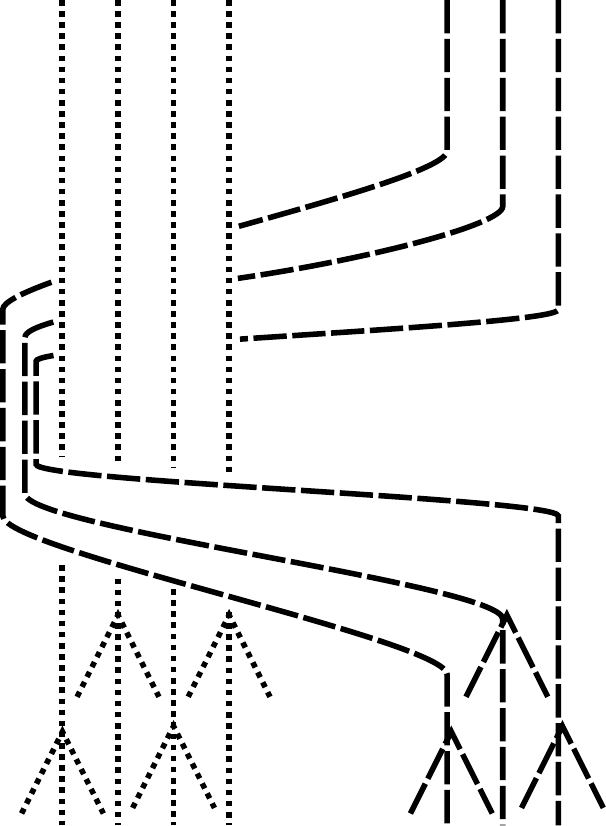}}
\put(163,25){$\cong$}
\put(180,0){\includegraphics[height=.15\textwidth]{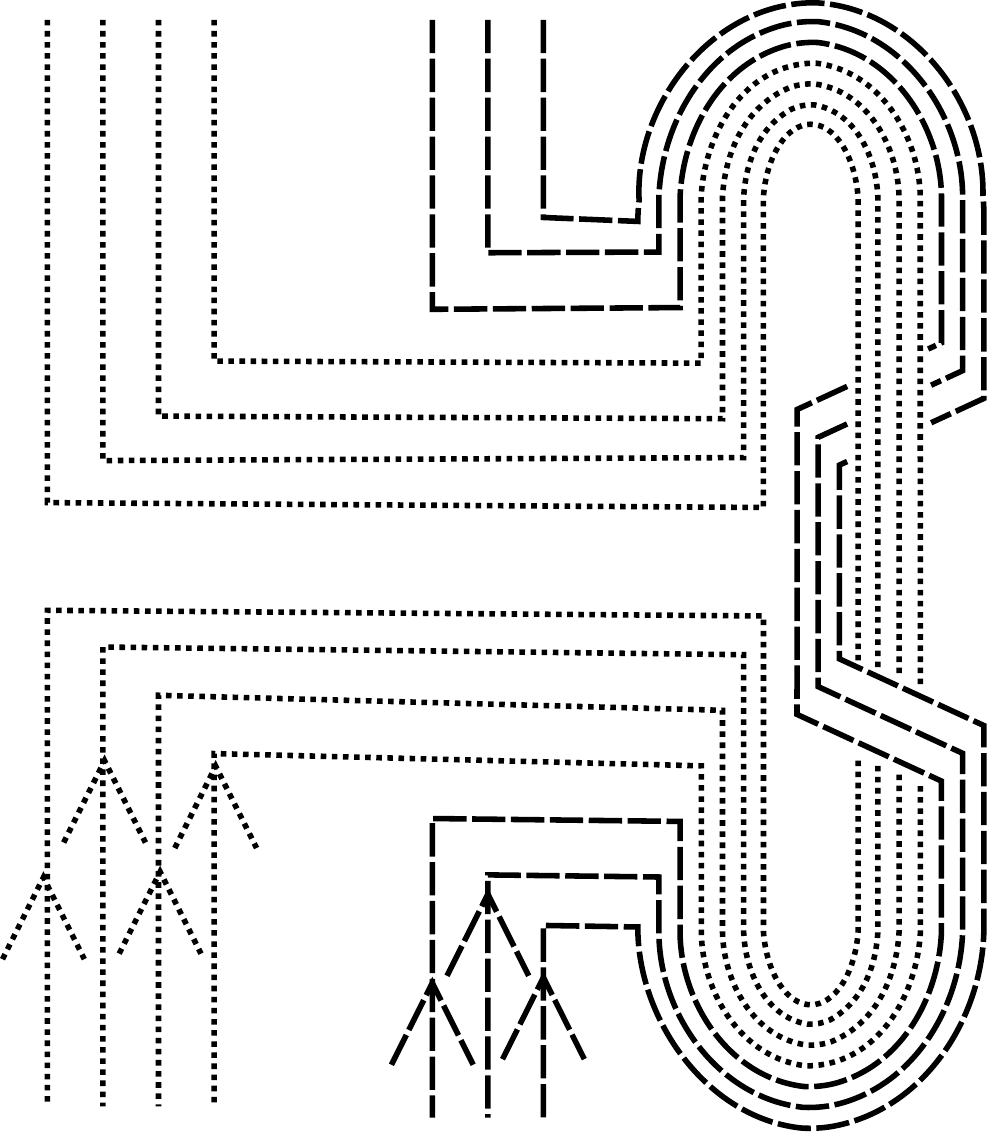}}
\end{picture}
\caption{The move considered in Proposition~\ref{VMoveProp} realized via  band summing with the link $V$.  The two different bands of arcs are assumed to be of different colors.}\label{fig:band twist}
\end{figure}

\begin{proposition}\label{VMoveProp}
If the colored links $L$ and $L'$ differ by the move depicted in Figure~\ref{fig:band twist} with $a$ strands of one color and $b$ strands of a different color then $|R(L)-R(L')|\le a+b-1$
\end{proposition}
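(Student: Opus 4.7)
The plan is to realize the local move of Figure~\ref{fig:band twist} as a sequence of band additions of the type in Figure~\ref{fig:band move}, and then bound the cumulative change in the $R$-invariant by applying Proposition~\ref{local moves} to each addition. The rightmost panels of Figure~\ref{fig:band twist} already exhibit the twist box $B$ as the effect of band-summing the trivial tangle on $a+b$ parallel strands with a small auxiliary link $V$, with one band running from each strand into $V$. Thus $L$ should be obtainable from the split union $L'\sqcup V$ by executing $a+b$ successive band-sum operations.

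First I would execute these band additions one at a time. The very first band joins $V$ to $L'$ across the original split, hence runs between split sublinks; by the second clause of Proposition~\ref{local moves} it does not change the $R$-invariant. Combined with the additivity $R(L'\sqcup V)=R(L')+R(V)$ under split unions sharing the same color set (itself a direct consequence of the corresponding additivity of $\sigma_L$ on split sublinks), together with a choice of $V$ for which $R(V)=0$---for instance the two-component split unlink with one component of each color, whose Cimasoni--Florens signature vanishes identically---the initial step produces a link whose $R$-invariant still equals $R(L')$.

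Next I would observe that each of the remaining $a+b-1$ band-sum operations is a genuine band addition in the sense of Figure~\ref{fig:band move}, to which the first clause of Proposition~\ref{local moves} applies. Each such addition therefore changes $R$ by at most $1$ in absolute value. Telescoping these bounds yields $|R(L)-R(L')|\le a+b-1$, as claimed.

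The main obstacle will be the combinatorial bookkeeping in the first paragraph: one has to verify that the twist box $B$ genuinely decomposes as $a+b$ bands attached to a split auxiliary link $V$ with $R(V)=0$, and then order these bands so that exactly one runs between split sublinks while the remaining $a+b-1$ each fall under the standard band-addition bound. Confirming this precisely requires a careful inspection of the explicit form of $V$ depicted in the rightmost panel of Figure~\ref{fig:band twist}, and a sanity check on small cases (for example $a=b=1$, where the bound $a+b-1=1$ should reduce to a single crossing change between differently-colored arcs as on the right of Figure~\ref{fig:smoothing}).
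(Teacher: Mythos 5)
Your overall strategy is exactly the paper's: realize the twist box as the split union $L'\sqcup V$ followed by $a+b$ band additions, let the first band (which runs between split sublinks) cost nothing by the second clause of Proposition~\ref{local moves}, and charge at most $1$ to each of the remaining $a+b-1$ bands, arriving at $|R(L)-R(L')-R(V)|\le a+b-1$. The structure of the argument is sound and matches the paper's proof step for step up to that point.

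The genuine gap is your treatment of $V$. The auxiliary link is not something you get to choose: it is forced by the move, and it is \emph{not} the two-component split unlink --- band-summing a trivial tangle with a split unlink cannot produce a full twist on $a+b$ strands. The actual $V$ (Figure~\ref{fig:LinkV}) is a nontrivial two-colored link, and $R(V)=0$ is a claim that has to be proved, not observed. The paper establishes it by a symmetry argument: reversing the orientations of all components of $V$ of one color yields a link isotopic to the mirror image of $V$, so by \cite[Proposition 2.10 and Corollary 2.11]{CimFlo} one has $\sigma_V(\omega_1,\omega_2^{-1})=-\sigma_V(\omega_1,\omega_2)$, and since $\omega\mapsto\omega^{-1}$ is a measure-preserving transformation of $\T^1$ the integral $R(V)$ vanishes. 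You flagged the verification of $R(V)=0$ as "bookkeeping," but it is the one nontrivial computation in the proof; without it the telescoped bound only gives $|R(L)-R(L')|\le a+b-1+|R(V)|$, which is not the stated inequality.
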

\begin{proof}
As is shown in Figure~\ref{fig:band twist}, $L'$ is given by taking the split union of $L$ with the two color link $V$ depicted in Figure~\ref{fig:LinkV} and adding $a+b$ bands, with the first band being between split sublinks. Since the signature invariant adds under split union, by \cite[Proposition 2.12]{CimFlo}, it follows that $|R(L')-R(L)-R(V)|\le a+b-1$, where $V$ is the link depicted in Figure~\ref{fig:LinkV}.

\begin{figure}[h!]
\setlength{\unitlength}{1pt}
\begin{picture}(110,110)
\put(0,0){\includegraphics[height=100pt]{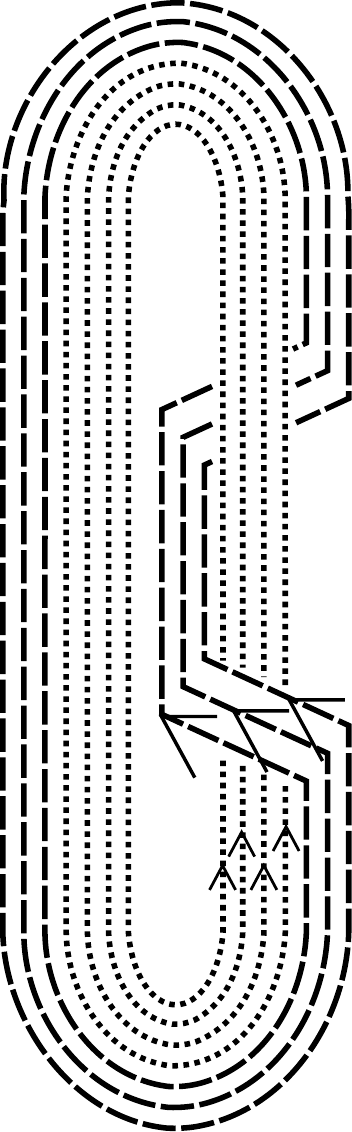}}
\put(60,40){$\cong$}
\put(90,0){\includegraphics[height=100pt]{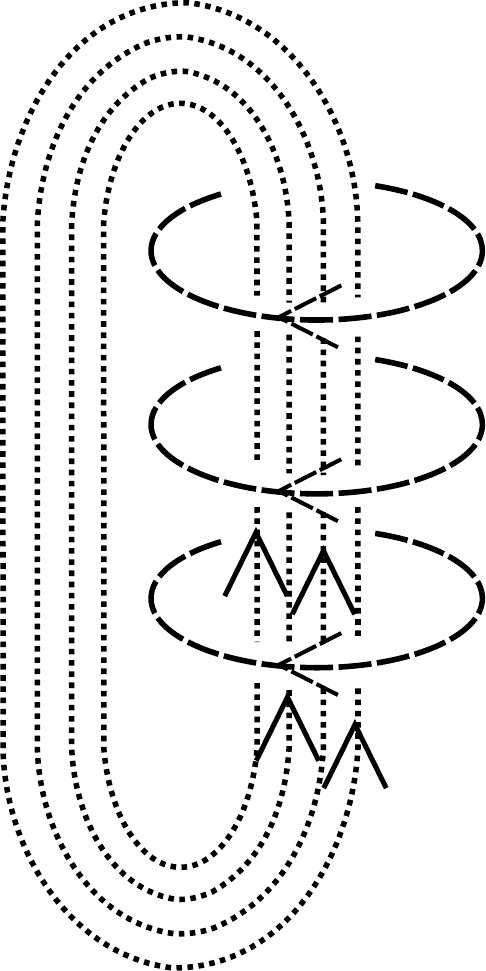}}
\end{picture}
\caption{The link $V$ and an isotopy.}\label{fig:LinkV}
\end{figure}

In order to compute $R(V)$, we notice that the result of reversing the orientation all of the components of $V$ of one color is isotopic the mirror image of $V$, so that by \cite[Proposition 2.10 and Corollary 2.11]{CimFlo}, $\sigma_V(\omega_1,\omega_2^{-1})=-\sigma_V(\omega_1,\omega_2)$.  Since $\omega\mapsto\omega^{-1}$ is a measure preserving transformation of $\T^1$, this implies that $R(V)=0$ and completes the proof.
\end{proof}

\section{Application: The linear independence of the twist knots which are algebraically of order 2}\label{application}

In this section, we study those twist knots (depicted in Figure~\ref{fig:twist}) which are of order 2 in the algebraic concordance group.

We recall some concepts from algebraic concordance. Let $K$ be an algebraically slice knot with a genus $g$ Seifert surface $F$.  By the definition of algebraic concordance, there exists a nonseparating collection of $g$ simple closed curves on $F$,  $L=L_1\dots L_g$, for which the Seifert form of $F$ vanishes.  That is, $\lnk(L_i, L_j^+)=0$ for all $i,j$, where $L_j^+$ is the result of pushing $L_j$ off of $F$ in the positive normal direction and $\lnk$ is the the linking number.  Using language from \cite{derivatives}, $L$ is called a \textbf{derivative} of $K$.  

For a simple closed curve, $\gamma$, on $F$, a curve $m$ in the complement of $F$ is called a \textbf{meridian for the band on which $\gamma$ sits} if $m$ bounds a disk which intersects $F$ transversely in a single arc and intersects $\gamma$ transversely in a single point.

In \cite{MyFirstPaper} the author defines a  $\rho$-invariant of knots, $\rho^1$ and proves the following results.  

\begin{proposition*}[See \cite{MyFirstPaper}, Corollary 4.4]\label{MFPobs}
The set of all twist knots $T_n$ for which $\rho^1(T_n)\neq 0$ and  $T_n$ is of order 2 or 4 in $\AC$  is linearly independent in the knot concordance group.
\end{proposition*}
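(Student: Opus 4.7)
The plan is to prove linear independence by contradiction, using $\rho^1$ as a second-order sliceness obstruction that persists beyond algebraic concordance. Suppose some finite nontrivial linear combination $K=\#_{i=1}^k a_i T_{n_i}$ is slice in $\conc$, with every $T_{n_i}$ of algebraic order $2$ or $4$ in $\AC$ and $\rho^1(T_{n_i})\neq 0$. The aim is to derive a contradiction by forcing at least one $\rho^1(T_{n_i})$ to vanish.

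The first step is to pass to an algebraically slice setting. Let $N\in\{2,4\}$ be the least common multiple of the algebraic orders of the $T_{n_i}$ appearing, so that $NK$ is both slice and algebraically slice. Its natural Seifert surface $F$ is the boundary-connect-sum of $N|a_i|$ genus-one Seifert surfaces, one for each copy of $T_{n_i}$ in the sum. A metabolizing derivative link $L$ on $F$ can be chosen to split compatibly as a disjoint union $L=\bigsqcup_i L^{(i)}$ with $L^{(i)}\subseteq F_i$ a derivative for $Na_i T_{n_i}$. This geometric decomposition is essential because $\rho^1(T_{n_i})$ is defined in MyFirstPaper precisely through derivatives of the individual summands. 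The core of the proof is then a two-sided computation of $\rho^{(2)}(M(L),\phi)$, where $\phi$ is the abelianization. On one side, because $L$ splits, Theorem 5.7 of MyFirstPaper provides a lower bound on $|\rho^{(2)}(M(L),\phi)|$ in terms of $\sum_i Na_i\rho^1(T_{n_i})$. On the other side, the slice disk exterior for $NK$, extended by 2-handle attachments along $L$ in the manner of Lemma~\ref{Build W}, gives a 4-manifold bounding $M(L)$ whose $L^2$-signature bounds $\rho^{(2)}(M(L),\phi)$ from above by a quantity depending only on the slice disk complement and not on the $n_i$. Equating the two forces $\sum_i Na_i\rho^1(T_{n_i})$ into a bounded window, and an extremal argument --- isolating the summand with largest $|\rho^1(T_{n_i})|$ and showing it cannot be cancelled by the remainder --- extracts the contradiction.

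The main obstacle is the control of the interaction terms. Since $\rho^1$ is not a homomorphism on $\conc$ but rather an invariant defined through a specific derivative construction in MyFirstPaper, any formula of the form $\rho^{(2)}(M(L),\phi)\approx\sum_i Na_i\rho^1(T_{n_i})$ holds only modulo corrections arising from the combinatorial freedom in choosing $L$ and from the nontrivial topology of the slice-disk complement. Showing that these corrections are strictly dominated by the leading contributions $|a_i\rho^1(T_{n_i})|$ is the delicate content of Theorems 5.6 and 5.7 of MyFirstPaper, which bound the corrections in terms of algebraic invariants of the individual twist knots rather than in terms of their $\rho^1$ values. Once this control is in place, the extremal argument closes cleanly and the full linear independence statement follows.
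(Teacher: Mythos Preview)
This proposition is not proved in the present paper; it is quoted from \cite{MyFirstPaper}, Corollary 4.4. However, the paper does reproduce the essential mechanism a few paragraphs later (the proposition about sets $A$ and $B$), so one can see what the actual argument is and compare it to yours.

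The actual proof does not use any extremal or magnitude argument. It uses the family of invariants $\rho^1_{p(t)}$ indexed by polynomials $p(t)$, with two properties from \cite{MyFirstPaper}: (i) $\rho^1_{p}$ is a homomorphism on the subgroup of $\conc$ generated by knots with prime Alexander polynomials, and (ii) $\rho^1_{\Delta_K}(K)=\rho^1(K)$ while $\rho^1_{p}(J)=\rho^0(J)$ whenever $(p,\Delta_J)=1$. The twist knots have pairwise distinct prime Alexander polynomials and $\rho^0(T_n)=0$. So if $\#_i a_i T_{n_i}$ is slice, applying $\rho^1_{\Delta_{T_{n_0}}}$ kills every summand except the $n_0$-th and yields $a_{n_0}\rho^1(T_{n_0})=0$, forcing $a_{n_0}=0$. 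This isolates each coefficient directly; no inequalities are needed.

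Your proposal has a genuine gap at the final step. Even granting everything up to the ``extremal argument,'' you have at best an approximate identity $\sum_i N a_i\,\rho^1(T_{n_i}) \approx 0$ modulo bounded corrections. There is no reason the term with the largest $|\rho^1(T_{n_i})|$ cannot be cancelled by the others: for real numbers $r_1,\dots,r_k$ one can generically find integers $a_i$ with $\sum a_i r_i$ as small as one likes. Nothing in your outline prevents this. The missing idea is precisely the separation mechanism above, which uses the distinctness of the Alexander polynomials (a fact you never invoke) to project onto a single summand. Your two-sided computation of $\rho^{(2)}(M(L),\phi)$ also conflates the abelian invariant $\rho^0$ of the derivative link with the metabelian invariant $\rho^1$ of the knot; the relation between them (Theorem~5.6 of \cite{MyFirstPaper}) goes in only one direction and does not yield the additive formula you need.
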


\begin{proposition*}
[\cite{MyFirstPaper}, Theorem 5.6]\label{MFPbound}
Suppose that $K$ is a genus 1 knot and that $K\#K$ is algebraically slice.  Let $L$ be a two component derivative of $K\#K$.  If the components of $L$ together with the meridians of the bands on which $L$ sit form a  $\Z$-linearly independent set in the Alexander module of $K\#K$, then 
$|2\rho^1(K)-\rho^0(L)|\le 1.$
\end{proposition*}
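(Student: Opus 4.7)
The plan is to construct a 4-manifold cobordism $W$ with $M(K\#K)$ and $-M(L)$ as boundary components, extend the metabelian representation defining $\rho^1(K\#K)=2\rho^1(K)$ to a coefficient system $\psi$ on $\pi_1(W)$ that restricts to the abelianization on $\pi_1(M(L))$, and then read the bound off the $L^2$-signature equality of Definition~\ref{defn of rho}.

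First, I would choose a genus two Seifert surface $F$ for $K\#K$, obtained by boundary-connect-summing two copies of a genus one Seifert surface for $K$, arranged so that the derivative $L=L_1\cup L_2$ sits on $F$ as a metabolizer for the Seifert form. Pushing the interior of $F$ into $B^4$ and removing an open tubular neighborhood produces an exterior $E(F)$, following Lemma~\ref{Build W}. To this I would attach a zero-framed 2-handle along $K\#K$ and zero-framed (with respect to the surface framing of $F$) 2-handles along $L_1$ and $L_2$, obtaining a 4-manifold $W$ with $\bdry W = M(K\#K)\sqcup (-M(L))\sqcup (\hat F\times S^1)$ where $\hat F$ is $F$ capped off by the core of the $K\#K$-handle. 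A Mayer-Vietoris argument parallel to that in Lemma~\ref{Build W} gives $H_1(W)\cong \Z$ generated by the meridian of $K\#K$, $\sigma(W)=0$, and $H_2(W)$ carried by boundary-parallel classes together with a single extra class arising from the handle attachment along $K\#K$.

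Second, I would assemble the coefficient system. Let $\alpha\colon\pi_1(M(K\#K))\to \Gamma$ be the metabelian representation used to define $\rho^1(K\#K)$, where $\Gamma=\Z\ltimes \mathbb{A}$ for $\mathbb{A}$ a $\Z[t,t^{-1}]$-module containing the Alexander module of $K\#K$. Because $L_1,L_2$ lie in the commutator subgroup of $\pi_1(M(K\#K))$, they map into $\mathbb{A}$ under $\alpha$. The hypothesis that $\{L_1,L_2,m_1,m_2\}$ (with $m_i$ the band meridians) is $\Z$-linearly independent in the Alexander module forces $\alpha$ to send $L_1$ and $L_2$ to generators of a free rank-$2$ direct summand of $\mathbb{A}$. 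This is exactly what allows $\alpha$ to extend to a homomorphism $\psi\colon \pi_1(W)\to \Gamma$ whose restriction to $\pi_1(M(L))$ factors as the abelianization $\phi\colon\pi_1(M(L))\to \Z^2$ followed by a monomorphism $\Z^2\into \Gamma$. By the injectivity invariance of $\rho^{(2)}$ (the fact used in the proof of Proposition~\ref{L2 product}), the restriction of $\psi$ to $M(L)$ has the same $\rho^{(2)}$ as $\phi$, namely $\rho^0(L)$. Applying Definition~\ref{defn of rho} and Proposition~\ref{L2 product} to kill the $\hat F\times S^1$ contribution yields
\begin{equation*}
2\rho^1(K)-\rho^0(L)=\sigma^{(2)}(W,\psi)-\sigma(W)=\sigma^{(2)}(W,\psi).
\end{equation*}

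Third, I would bound $|\sigma^{(2)}(W,\psi)|\le 1$. The twisted Mayer-Vietoris sequence for the decomposition of $W$ into $E(F)$, the hollow 2-handles along $L_1,L_2$, and the handle along $K\#K$, combined with the linear independence hypothesis, collapses $H_2(W;V_\psi)$ modulo boundary-parallel classes down to at most a single class whose self-intersection contributes at most $\pm 1$ to the twisted signature; the boundary-parallel classes contribute nothing because their cycles lie in $\hat F\times S^1$ and die under the inclusion of the boundary. The main obstacle, and where the hypothesis of the proposition does all its work, is the compatibility statement in the second paragraph: without the $\Z$-linear independence in the Alexander module the map $\psi$ on $\pi_1(M(L))$ could land in a torsion quotient of $\mathbb{A}$, producing a spurious term of unbounded magnitude. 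A secondary careful step is the rank-one bound on the twisted intersection form, which has to be extracted from the algebra of the Alexander module rather than from a purely topological count of 2-cells.
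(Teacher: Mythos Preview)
This proposition is not proved in the present paper; it is quoted from \cite{MyFirstPaper} (Theorem~5.6) and used as a black box in Section~\ref{application}. So there is no in-paper proof to compare against, and your sketch is really an attempt to reconstruct the argument of \cite{MyFirstPaper}.

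Your overall strategy---build a cobordism from $M(K\#K)$ to $-M(L)$, extend the metabelian coefficient system across it, and bound the $L^{2}$-signature defect---is indeed the standard mechanism behind results of this type. But several of the steps as written are not correct or not justified. First, the cobordism you describe is not the one that is typically used and its boundary is not what you claim: attaching $0$-framed $2$-handles to $E(F)$ along $K\#K$ and along $L_1,L_2$ does not produce $-M(L)$ as a boundary component. The usual construction starts from $M(K\#K)\times[0,1]$, attaches $0$-framed $2$-handles along the derivative curves sitting in the top boundary, and then identifies the new upper boundary with $M(L)$ (up to connected sums with $S^1\times S^2$); getting that identification right is part of the work. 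Second, the role of the linear-independence hypothesis is more delicate than ``$\alpha$ sends $L_1,L_2$ to generators of a free rank-$2$ summand.'' The hypothesis involves all four elements $L_1,L_2,m_1,m_2$ in the Alexander module, and it is used both to guarantee that the coefficient system restricts injectively to $H_1(M(L))$ \emph{and} to control which second-homology classes survive with twisted coefficients. Third, your bound $|\sigma^{(2)}(W,\psi)|\le 1$ is asserted rather than proved: the claim that the non-boundary part of $H_2(W;V_\psi)$ is rank one, with self-pairing contributing at most $\pm1$, is exactly the heart of the argument and requires a careful chain-level or Mayer--Vietoris computation, not an appeal to ``the algebra of the Alexander module.''

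In short: right template, but the cobordism is misdescribed and the two load-bearing steps (extension of $\psi$ with the correct restriction, and the rank-one bound on the twisted form) are asserted rather than established. Since the paper itself defers to \cite{MyFirstPaper} for this, you would need to consult that reference to fill these in.
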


Let $T_n$ be a twist knot for which $T_n\# T_n$ is order 2 in $\AC$.  According to \cite[Corollary 4.4]{MyFirstPaper} and Theorem~\ref{L2 Rho}, in order to show that  $\rho^1(T_n)\neq 0$, it suffices to find a derivative, $L$, of $T_n\#T_n$ for which the integral of  $\sigma_{L}$ over $\T^2$ is greater than $1$ in absolute value.  

 For every $n$ such that $T_n\#T_n$ is algebraically slice we find a derivative,  
namely the link of Figure~\ref{fig:derivative}.

\begin{figure}[h!]
\setlength{\unitlength}{1pt}
\begin{picture}(200,150)
\put(0,0){\includegraphics[width=.45\textwidth]{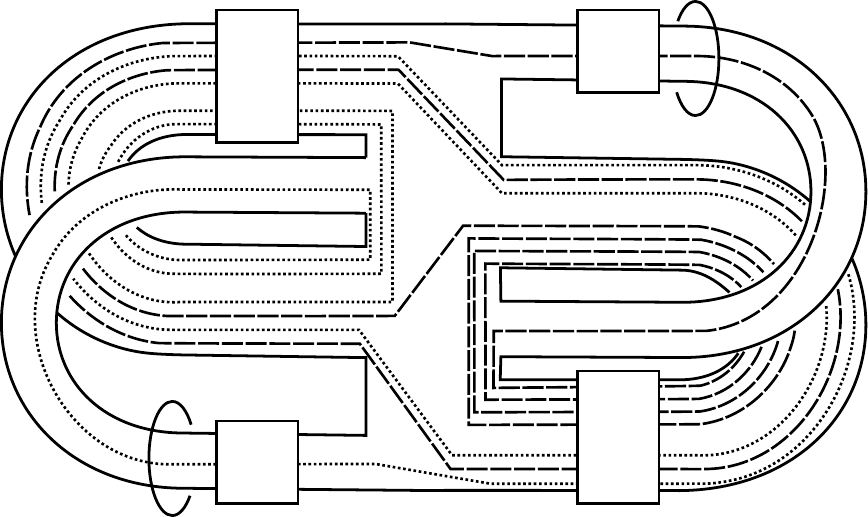}}
\put(45,8){$n$}
\put(108,10){$-1$}
\put(41,80){$-1$}
\put(113,85){$n$}
\put(125,70){$m_2$}
\put(23,-10){$m_1$}
\put(30,128){\underline{$b$ strands of each color}}
\put(30,78){\line(0,1){20}}
\put(50,125){\vector(-1,-2){20}}
\put(50,115){\underline{$a-b$ strands}}
\put(80,113){\vector(-1,-2){19}}
\put(80,100){\underline{$a+b-2$ strands}}
\put(90,97){\vector(1,-2){22}}
\end{picture}
\caption{For $n=a^2-a+b^2$, the link $L_{a,b}$ is a derivative for $T_n$  The curves $m_1$ and $m_2$ are meridians about the bands on which the components of $L$ sit.}\label{fig:derivative}
\end{figure}

\begin{lemma}\label{order2}
The $n$-twist knot $T_n$ is algebraically of order exactly 2 if and only if there are positive integers $a$ and $b$ such that $n=a^2-a+b^2$ but there does not exist an integer $c$ with $n=c^2-c$.  For such an $n$, the link $L_{a,b}$, depicted in Figure~\ref{fig:derivative}, is a derivative for $T_n\#T_n$.  Moreover the meridians, $m_1$ and $m_2$, of the bands on which the components of $L_{a,b}$ sit are $\Z$-linearly independent of the components of $L_{a,b}$ in the Alexander module of $T_n\#T_n$.
\end{lemma}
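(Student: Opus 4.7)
The plan is to dispatch the three assertions (the arithmetic equivalence, the derivative property, and the Alexander-module independence) in turn.

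\emph{Arithmetic equivalence.} Recall from the introduction that $T_n$ is algebraically of order exactly two precisely when $n>0$, $4n+1$ is not a perfect square, and no prime $p\equiv 3\pmod 4$ divides $4n+1$ with odd multiplicity. By the Fermat--Euler two-square theorem the last condition is equivalent to $4n+1$ being expressible as a sum of two integer squares. Since $4n+1$ is odd, in any such representation exactly one summand is even, so one can write $4n+1=(2a-1)^2+(2b)^2$ with $a\ge 1$ and $b\ge 0$, which rearranges to $n=a^2-a+b^2$. The boundary case $b=0$ is exactly the situation $4n+1=(2a-1)^2$, i.e.\ $n=c^2-c$ with $c=a$; excluding this case is equivalent to $T_n$ being algebraically non-slice, hence of order exactly two.

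\emph{The derivative.} Next I would take the genus-$2$ Seifert surface $F$ for $T_n\#T_n$ obtained as the boundary connect-sum of two copies of the standard genus-$1$ Seifert surface for $T_n$, with symplectic basis $\alpha_1,\beta_1,\alpha_2,\beta_2$ and block-diagonal Seifert form $V\oplus V$ where $V=\bigl(\begin{smallmatrix}-1&1\\0&n\end{smallmatrix}\bigr)$. The diagram in Figure~\ref{fig:derivative} should be read as a depiction of two simple closed curves on $F$: the labels ``$b$ strands of each color'', ``$a-b$ strands'' and ``$a+b-2$ strands'' record how many times each component winds through the two twist boxes and the two clasp regions, and together determine $[L_1],[L_2]\in H_1(F;\Z)$ as explicit integer vectors whose entries are polynomial in $a$ and $b$. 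One then has to compute the $2\times 2$ matrix $\bigl([L_i]^T(V\oplus V)[L_j]\bigr)_{i,j}$ and verify that every entry vanishes under the identity $n=a^2-a+b^2$; the self-pairings will reduce to the equation $a^2-a+b^2-n=0$ and the off-diagonal pairings will cancel against each other by the symmetry between the two copies of $T_n$. Disjointness, primitivity and non-separatedness are visible from the diagram.

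\emph{Alexander-module independence.} Finally, the Alexander module $\mathcal{A}(T_n\#T_n)\cong\mathcal{A}(T_n)\oplus\mathcal{A}(T_n)$ is, as a $\Z$-module, isomorphic to $(\Z[t^{\pm 1}]/\Delta_{T_n})^{\oplus 2}$ and has $\Z$-rank four. I would express each of $[L_1],[L_2],[m_1],[m_2]$ in coordinates relative to a fixed $\Z$-basis of this module---using the same homological data from stage two together with the fact that $m_1,m_2$ are meridians of the bands carrying $L_1,L_2$---assemble them into a $4\times 4$ integer matrix, and show that its determinant is non-zero. Because each $m_i$ is transverse to the band carrying $L_i$, this matrix should turn out to be block triangular, reducing the calculation to a product of $2\times 2$ sub-determinants whose values are units modulo $4n+1$.

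\emph{Main obstacle.} The arithmetic step is elementary, and the independence check in the Alexander module collapses to a finite determinant once the explicit classes are in hand. The real work will be the second stage: correctly translating the diagram in Figure~\ref{fig:derivative} into homology classes on the genus-two Seifert surface, and then verifying that the resulting Seifert pairings cancel \emph{exactly} when $n=a^2-a+b^2$. This is where the choice of strand counts $b$, $a-b$ and $a+b-2$ has been engineered and where any sign or basis error would be exposed.
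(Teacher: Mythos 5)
Your plan follows the paper's proof in all three stages: the arithmetic step via Levine's criterion and the two-square theorem is identical (and complete as you wrote it), the derivative property is checked by pairing explicit homology classes against the Seifert matrix of the boundary-connect-sum surface, and the independence claim is settled by an explicit rank computation in the Alexander module. So the route is the same; the issue is that the two geometric stages are left as descriptions of computations rather than computations.

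Concretely, for the derivative step the paper reads off from Figure~\ref{fig:derivative} that, in the basis $\gamma_1,\dots,\gamma_4$ of $H_1(F)$ with Seifert matrix having diagonal blocks $\left(\begin{smallmatrix}n&1\\0&-1\end{smallmatrix}\right)$, the two components represent $v_1=[1,a,0,b]^T$ and $v_2=[0,b,1,1-a]^T$. With these vectors the off-diagonal pairings $v_1^TVv_2$ and $v_2^TVv_1$ vanish \emph{identically} (the $ab$ terms cancel without any hypothesis on $n$), and only the self-pairings $v_i^TVv_i=n+a-a^2-b^2$ use the relation $n=a^2-a+b^2$; your guess that the off-diagonal terms cancel ``by the symmetry between the two copies'' is not how it works, since $v_1$ and $v_2$ are not symmetric under swapping the two summands. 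Until you have actually extracted $v_1,v_2$ from the strand counts in the figure, the central claim of the lemma is unverified, and you correctly flag this as the real work --- but that means the proposal does not yet contain a proof of it. For the last stage, the paper does not choose a $\Z$-basis directly: it presents $A_0(T_n\#T_n)$ by $V-tV^T$, simplifies to $\bigl(\Q[t,t^{-1}]/(nt^2-(2n+1)t+n)\bigr)^{\oplus 2}$ generated by $m_1,m_2$, and writes $v_1,v_2$ in these generators; the resulting $4\times 4$ determinant (in the basis $m_1,tm_1,m_2,tm_2$) reduces to $\det\left(\begin{smallmatrix}an&bn\\bn&(1-a)n\end{smallmatrix}\right)=-n^3$, so independence holds precisely when $n\neq 0$ --- not because any quantity is a unit modulo $4n+1$. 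Your proposed $4\times4$ determinant would work, but the ``block triangular / units mod $4n+1$'' mechanism you predict is not what makes it nonzero.
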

\begin{proof}

According to Levine \cite[Corollary 23]{Le10}, $T_n\#T_n$ is algebracially slice if and only if $n>0$ and $4n+1$ has no odd multiplicity prime factors congruent to $3$ mod $4$.  Under these conditions an elementary fact from number theory (see \cite[Theorem 12.3]{Bu}, for example) provides integers $a_0$ and $b_0$ with $a_0^2+b_0^2=4n+1$.  Since $4n+1$ is odd it must be that $a_0=2a-1$ is odd while $b_0=2b$ is even.  Thus, $a^2-a+b^2=n$.  If $n=c^2-c$ for some $c$ then $T_n$ is algebraically slice.

\begin{figure}[h!]
\setlength{\unitlength}{1pt}
\begin{picture}(200,90)
\put(0,0){\includegraphics[width=.45\textwidth]{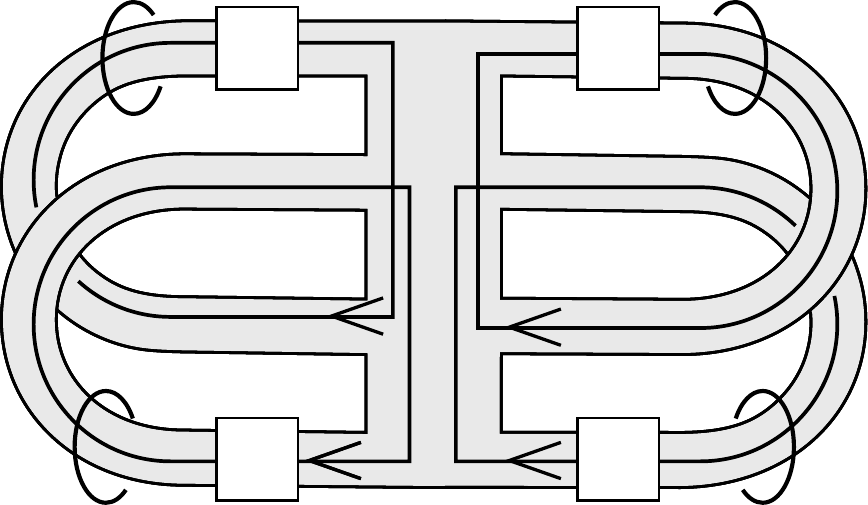}}
\put(45,5){$n$}
\put(108,5){$-1$}
\put(41,83){$-1$}
\put(113,82){$n$}
\put(-4,15){$\gamma_1$}
\put(-4,80){$\gamma_2$}
\put(155,10){$\gamma_4$}
\put(155,80){$\gamma_3$}
\put(8,-5){$\mu_1$}
\put(8,90){$\mu_2$}
\put(145,-5){$\mu_3$}
\put(145,90){$\mu_4$}
\end{picture}
\caption{$\gamma_1,\dots,\gamma_4$ form a basis for the first homology of a Seifert surface for $T_n\#T_n$.  The meridians for $\gamma_i$, $\mu_1,\dots, \mu_4$ form a generating set for the Alexander module of $T_n\#T_n$.}\label{fig:surfaceWithBasis}
\end{figure}

We now prove that the link $L_{a,b}$ is a derivative.   The first homology of the Seifert surface $F$ for $T_n\# T_n$ is a free abelian group with basis given by the curves $\gamma_1,\gamma_2,\gamma_3,\gamma_4$ in Figure~\ref{fig:surfaceWithBasis}.  With respect to this basis the components of $L_{a,b}$ represent the classes in $H_1(F)$ given by the vectors
$$
v_1=\left[\begin{array}{cccc}1&a&0&b\end{array}\right]^T, 
v_2=\left[\begin{array}{cccc}0&b&1&(1-a)\end{array}\right]^T
$$
while the Seifert form of $F$ is given the matrix
$$
V=\left[\begin{array}{cccc}n&1&0&0\\0&-1&0&0\\0&0&n&1\\0&0&0&-1\end{array}\right]
$$
A computation (remembering that $n=a^2-a+b^2$) verifies that $v_i^TVv_j=0$  for $i,j\in \{1,2\}$.  Thus, $L_{a,b}$ is a derivative.

In order to address the linear independence claim we first recall some classical knot theory facts.  A good reference is \cite[Chapter 8, Section C]{Rolfsen}.  The (rational) Alexander module, $A_0(T_n\#T_n)$, is the $\Q[t,t^{-1}]$-module generated by $\mu_1,\mu_2,\mu_3,\mu_4$ with presentation matrix $V-tV^T$.  With respect to this presentation, $m_1=\mu_1$ and $m_2=\mu_3$ correspond to the first and third generators and $\gamma_i$ corresponds to the $i^\text{th}$ column of $V$.  

Performing the presentation calculus needed to unwind this yeilds  
$$A_0(T_n\#T_n)\cong \left(\frac{\Q[t,t^{-1}]}{\left(nt^2-(2n+1)t+n\right)}\right)\oplus\left(\frac{\Q[t,t^{-1}]}{\left(nt^2-(2n+1)t+n\right)}\right)$$
with generators $m_1$ and $m_2$ and that the components of $L_{a,b}$ (represented by $v_1$ and $v_2$) correspond to 
\begin{eqnarray*}v_1&\mapsto&(n+a(1-n+nt))m_1+b(1-n+nt)m_2\\
v_2&\mapsto &b(1-n+nt)m_1+(1+(1-a)(1-n+nt))m_2.\end{eqnarray*}

Provided $n\neq 0$ it is straightforward to see that images of $m_1,m_2,v_1$ and $v_2$ form a $\Q$-linearly independent set.  This completes the proof.
\end{proof}

According to \cite[Corollary 4.4]{MyFirstPaper} and Theorem~\ref{L2 Rho}, in order to show that $\rho^1(T_n)\neq 0$, it suffices to show that the integral of the Cimasoni-Florens signature of the link $L_{a,b}$ over $\T^2$ is greater than $1$ in absolute value.  The remainder of this paper is devoted to this computation.



\begin{figure}[h!]
\setlength{\unitlength}{1pt}
\begin{picture}(300,140)
\put(0,0){\includegraphics[height=.2\textwidth, angle=90]{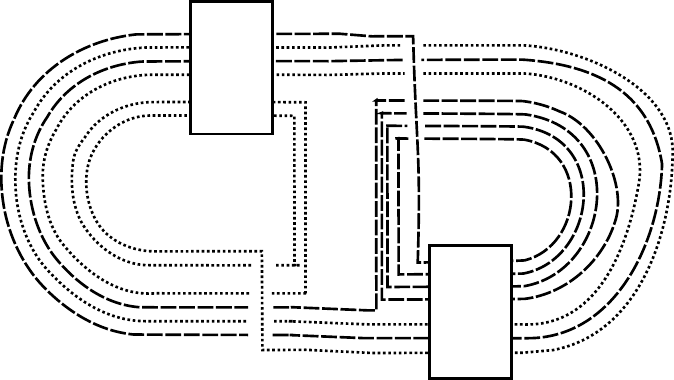}}
\put(5,40){$-1$}
\put(50,85){$-1$}
\put(80,60){$\cong$}

\put(95,0){\includegraphics[height=.2\textwidth, angle=90]{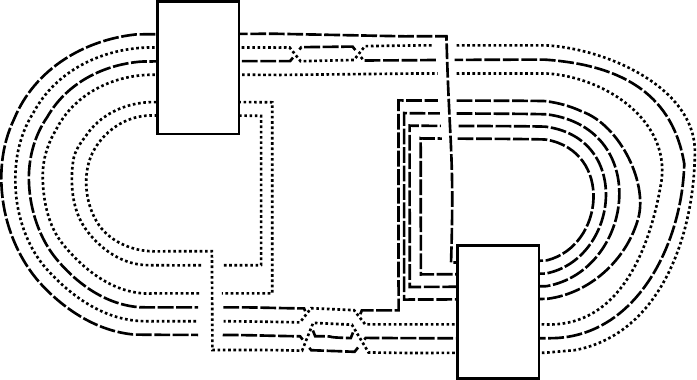}}
\put(100,35){$-1$}
\put(145,90){$-1$}
\put(175,60){$\mapsto$}

\put(190,0){\includegraphics[height=.2\textwidth, angle=90]{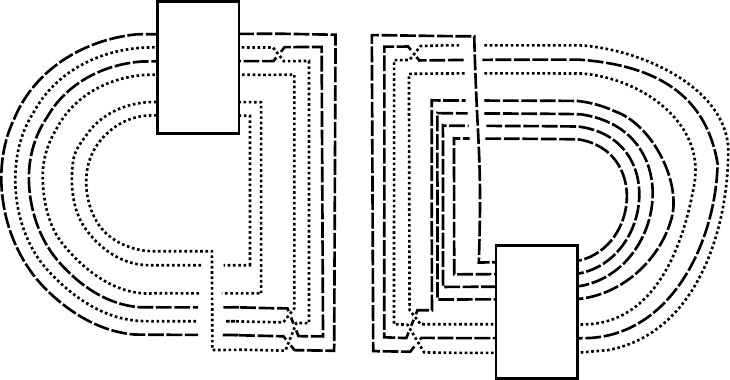}}
\put(195,35){$-1$}
\put(240,100){$-1$}
\end{picture}
\caption{Left to right: The link $L_{a,b}$, an isotopy and $L_{a,b}^1$, the result of adding $b$ bands to $L_{a,b}$.}\label{fig:Lab0}
\end{figure}

As its depicted in Figure~\ref{fig:Lab0}, by adding $2b$ bands to this link, $b$ of each color, the link $L_{a,b}$ is reduced to the split union of two links, $L^1_{a,b}$.  By Proposition~\ref{local moves}, $|\rho^0(L_{a,b})-R(L^1_{a,b})|\le 2b-1$.

\begin{figure}[h!]
\setlength{\unitlength}{1pt}
\begin{picture}(300,130)
\put(0,0){\includegraphics[width=.39\textwidth, angle=90]{DerivativeAddBands.pdf}}
\put(5,35){$-1$}
\put(55,98){$-1$}
\put(85,60){$\cong$}

\put(100,0){\includegraphics[width=.39\textwidth, angle=90]{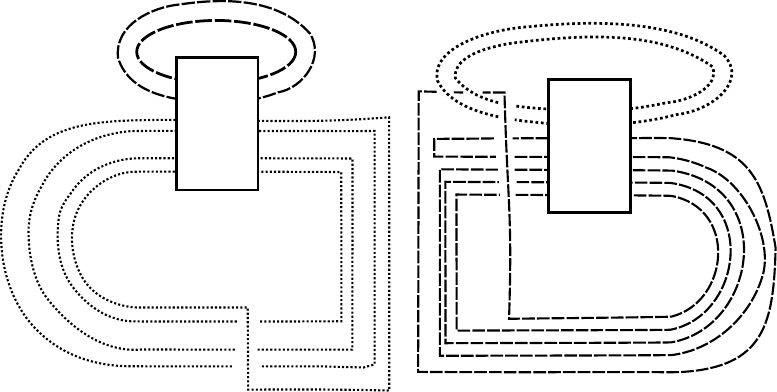}}
\put(115,35){$-1$}
\put(118,105){$-1$}

\put(185,60){$\mapsto$}
\put(200,0){\includegraphics[width=.39\textwidth, angle=90]{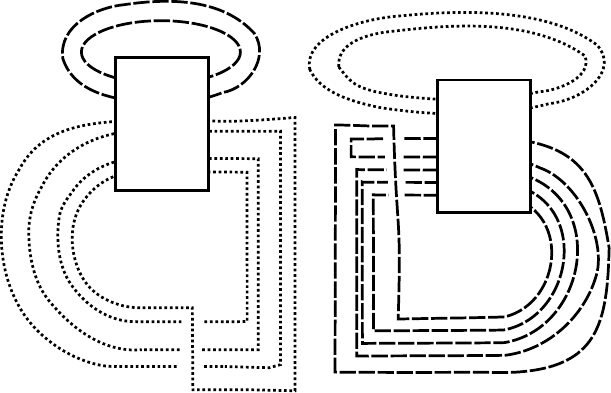}}
\put(222,34){$-1$}
\put(226,110){$-1$}
\end{picture}
\caption{Left to right: The link $L^1_{a,b}$, an isotopy of it and $L_{a,b}^2$ the result of changing $b$ crossings.}\label{fig:Lab1}
\end{figure}

As is depicted in Figure~\ref{fig:Lab1}, by changing $b$ crossings between components of different colors $L_{a,b}^1$ becomes the link $L_{a,b}^2$, so that $|R(L_{a,b}^1)-R(L_{a,b}^2)|\le b$ and $|\rho^0(L_{a,b})-R(L^2_{a,b})|\le 3b-1$.

\begin{figure}[h!]
\setlength{\unitlength}{.9pt}
\begin{picture}(370,140)
\put(0,0){\includegraphics[width=.39\textwidth, angle=90]{AddBandsLoseCrossings.pdf}}
\put(22,34){$-1$}
\put(26,120){$-1$}
\put(114,75){$\cong$}
\put(120,0){\includegraphics[width=.39\textwidth, angle=90]{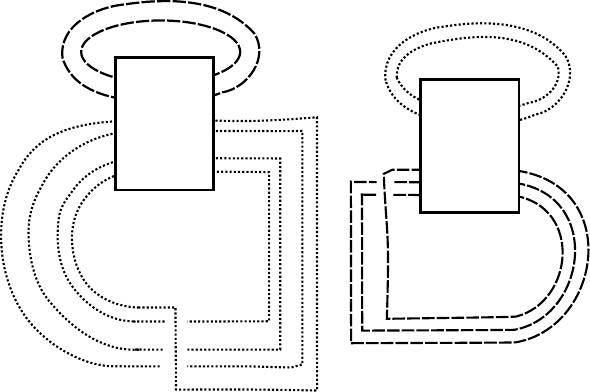}}
\put(142,40){$-1$}
\put(146,120){$-1$}
\put(235,75){$\cong$}
\put(245,0){\includegraphics[width=.39\textwidth,angle=90]{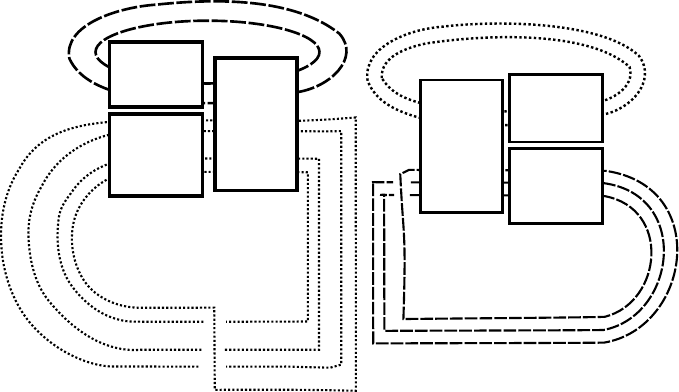}}
\put(266,55){$B$}
\put(273,30){$-1$}
\put(254,30){$-1$}
\put(274,100){$B$}
\put(262,125){$-1$}
\put(280,125){$-1$}
\end{picture}
\caption{Left to right: The link $L^2_{a,b}$ and some isotopies of it.  The $B$ denotes the move in Figure~\ref{fig:band twist}.}\label{fig:Lab2}
\end{figure}

The link $L_{a,b}^2$ is the result of performing two band twist moves (as in Figure~\ref{fig:band twist}) starting with the link $L_{a,b}^3$ of Figure~\ref{fig:Lab3}.  The latter is the split union of four torus links.  The leftmost twist involves $a$ strands of one color and $b$ of the other.  The rightmost involves $a-1$ of one color and $b$ of the other.  By Proposition~\ref{VMoveProp} $|R(L^2_{a,b})-R(L_{a,b}^3)|\le(a+b-1)+(a-1+b-1)$, and $|\rho^0(L_{a,b})-R(L_{a,b}^3)|\le 5b+2a-4$.

Finally,  by \cite[Proposition 2.13]{CimFlo} Cimasoni-Florens signature adds under split union:
 $$
 R(L_{a,b}^3)=R(T(a,1-a))+R(T(1-a,a))+2R(T(b,-b)),
 $$
  where $T(a,b)$ is the one color $(a,b)$-torus link.  In the case of knots, the Cimasoni-Florens signature agrees with the Tristram-Levine signature.   The integral of the Tristram-Levine signature of torus knots is computed by Borodzic \cite{Boro1} and independently by Collins \cite{Collins}.  In \cite{Boro1}, Borodzic also computes the integral of the one colored signature of the $(b,b)$-torus link, which is the mirror image of the $(b,-b)$-torus link.  Specifying their results to our setting,
\begin{equation*}
\begin{array}{rclll}
R(T(a,1-a))&=&R(T(1-a,a))&=&\dfrac{(a+1)(a-2)}{3}\\
R(T(b,-b))&=&\dfrac{(b-1)^2}{3}.
\end{array}
\end{equation*}

\begin{figure}[h!]
\setlength{\unitlength}{1pt}
\begin{picture}(140,120)
\put(0,0){\includegraphics[height=.25\textwidth, angle=90]{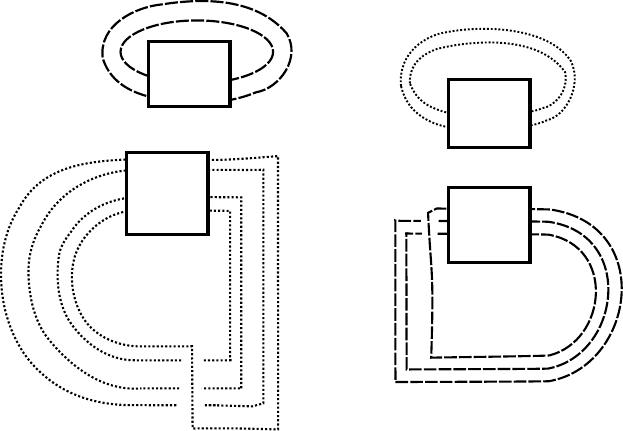}}
\put(8,36){$-1$}
\put(34,30){$-1$}
\put(17,100){$-1$}
\put(40,100){$-1$}
\end{picture}
\caption{The link $L^3_{a,b}$ consists of the split union of two one color $(b,-b)$-torus links, a $(a-b,b-a+1)$-torus knot and a $(a+b-3,-a-b+2)$-torus knot.}\label{fig:Lab3}
\end{figure}

Putting all of this together,
\begin{equation}
\begin{array}{rcl}
\rho^0(L_{a,b})&\ge&\dfrac{2(a+1)(a-2)+2(b-1)^2}{3}-(2a+5b-4)\\
&=&\dfrac{2a^2+2b^2-8a-19b+10}{3}.
\end{array}
\end{equation}


Since we are interested in when $\rho^0(L_{a,b})>1$, we ask when
$$f(a,b):=2a^2+2b^2-8a-19b+7>0.$$

By observing that $f(a,b)$ grows quadratically in both $a$ and $b$, we immediately see that
\begin{proposition*}
For all but finitely many $a$ and $b$, $f(a,b)>0$ so that $\rho^0(L_{a,b})>1$ and for $n=a^2-a+b^2$, $\rho^1(T_{n})>0$.  Thus, the set containing all of the twist knots $T_n$ which are algebraically of order $2$ is linearly independent with only finitely many exceptions.
\end{proposition*}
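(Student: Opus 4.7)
The plan is to combine the displayed chain of inequalities already established in the section,
\begin{equation*}
\rho^0(L_{a,b}) \;\geq\; \frac{2a^2+2b^2-8a-19b+10}{3} \;=\; \frac{f(a,b)}{3} + 1,
\end{equation*}
with the obstruction framework of \cite{MyFirstPaper}, reading off that $f(a,b)>0$ implies $\rho^0(L_{a,b})>1$ which in turn implies $\rho^1(T_n)>0$.

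First I would dispatch the ``finitely many'' claim by completing the square:
\begin{equation*}
f(a,b) \;=\; 2(a-2)^2 + 2\!\left(b-\tfrac{19}{4}\right)^{\!2} - \tfrac{369}{8}.
\end{equation*}
Thus $f(a,b)\le 0$ forces $(a,b)$ into a bounded disk about $(2,19/4)$, and so only finitely many positive integer pairs $(a,b)$ satisfy $f(a,b)\le 0$. Outside this finite exceptional set, $f(a,b) > 0$ and the chain above gives $\rho^0(L_{a,b}) > 1$.

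Next I would invoke \cite[Theorem 5.6]{MyFirstPaper}, whose hypotheses were all verified in Lemma~\ref{order2}: $T_n$ is genus one, $T_n\#T_n$ is algebraically slice when $n=a^2-a+b^2$, $L_{a,b}$ is a derivative of $T_n\#T_n$, and the components of $L_{a,b}$ together with the band meridians $m_1,m_2$ are $\Z$-linearly independent in the Alexander module. The cited theorem yields $|2\rho^1(T_n)-\rho^0(L_{a,b})|\le 1$, so $\rho^0(L_{a,b}) > 1$ forces $\rho^1(T_n) > 0$. (Identifying the invariant $\rho^0(L)$ with the earlier $R(L)$ uses Theorem~\ref{L2 Rho} applied to $A=\Z^n$, the fact that $\lambda(\T^n)=(2\pi)^n$, and the fact that $\widehat{\sigma_L}$ agrees with $\sigma_L$ off a measure-zero set.)

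Finally, an $n$ with $T_n$ algebraically of order $2$ is an exception to our conclusion only if \emph{every} representation $n=a^2-a+b^2$ with positive integers $a,b$ lands in the failure disk; since the disk contains only finitely many integer pairs, they produce only finitely many exceptional values of $n$. For all remaining $n$ we have $\rho^1(T_n)\ne 0$, and \cite[Corollary 4.4]{MyFirstPaper} then delivers the linear independence in $\conc$. There is no serious obstacle to this plan; the only delicate point is the normalization check that $R(L)=\rho^0(L)$ noted above, and the observation that this proposition makes the weaker ``finitely many'' statement rather than attempting the explicit enumeration that pins the exception count down to the twelve values listed in Theorem~\ref{linear indep done}.
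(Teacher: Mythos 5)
Your proposal is correct and follows the same route the paper takes: the paper treats this proposition as immediate from the displayed bound $\rho^0(L_{a,b})\ge\frac{2a^2+2b^2-8a-19b+10}{3}=\frac{f(a,b)}{3}+1$ together with the quadratic growth of $f$, combined with \cite[Theorem 5.6]{MyFirstPaper} and \cite[Corollary 4.4]{MyFirstPaper} as set up earlier in the section. You have merely made the ``finitely many'' step explicit by completing the square, which is a fine (and accurate) elaboration of the paper's one-line justification.
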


Next, We use a computer to determine $\{n:n=a^2-a+b, a>0, b>0, f(a,b)>0\}$.  Combining this result with the computation in \cite[Corollary 6.2]{MyFirstPaper} showing that $\rho^1(T_n)>0$ for $n$ of the form $n=x^2+x+1$ with $x>1$ gives the following result.   (The knot called $T_n$ in this paper is the mirror image of what is called $T_{-n}$ in \cite{MyFirstPaper}) 

\begin{proposition}\label{linear indep second try}
The set containing all of the twist knots $T_n$ which are algebraically of order $2$ is linearly independent with the following 39 possible exceptions:
\begin{equation*}
\begin{array}{rcl}
n&=&1, 3, 4, 
9, 10, 11, 
15, 16, 18, 
22, 24, 25, 27, 28, 29, 
34, 36, 37, 38, 
\\&&
39, 
45, 48, 49, 51, 55, 
58, 61, 64, 66, 67, 69, 70, 78, 79, 83, 84, 87,
\\&&
 93, 101
.
\end{array}
\end{equation*}\end{proposition}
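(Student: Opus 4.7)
The plan is to assemble three inputs already established in this section. First, \cite[Corollary 4.4]{MyFirstPaper} reduces linear independence of the algebraic-order-$2$ twist knots to the assertion that $\rho^1(T_n) \neq 0$ for each such $n$. Second, \cite[Theorem 5.6]{MyFirstPaper} gives $|2\rho^1(T_n) - \rho^0(L_{a,b})| \le 1$ provided $L_{a,b}$ is a derivative of $T_n \# T_n$ whose two components together with the two band meridians form a $\Z$-linearly independent set in the Alexander module; Lemma~\ref{order2} furnishes exactly this hypothesis for every positive-integer decomposition $n = a^2 - a + b^2$. Third, the local-move analysis (Propositions~\ref{local moves} and \ref{VMoveProp}), combined with Theorem~\ref{L2 Rho} identifying $\rho^0(L)$ with $R(L)$ and with the Borodzic--Collins integrals of Tristram--Levine signatures of torus knots and links, yields the explicit inequality $\rho^0(L_{a,b}) \ge \tfrac{1}{3}(2a^2 + 2b^2 - 8a - 19b + 10)$. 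Consequently, whenever $f(a,b) := 2a^2 + 2b^2 - 8a - 19b + 7 > 0$, we conclude $\rho^0(L_{a,b}) > 1$ and hence $\rho^1(T_n) \neq 0$.

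With that machinery in place, I would turn to a finite computation. Because $f$ grows quadratically in both variables, the set of positive integer pairs $(a,b)$ with $f(a,b) \le 0$ is finite and can be listed either by a short computer search or by completing the square to bound $a$ and $b$ by a small constant. For each $n$ that is algebraically of order $2$, I would enumerate all positive-integer solutions of $n = a^2 - a + b^2$ and check whether any one of them satisfies $f(a,b) > 0$; if so, the argument above certifies $\rho^1(T_n) \neq 0$. The candidate exceptions are thus precisely those finitely many $n$ for which \emph{every} decomposition $n = a^2 - a + b^2$ has $f(a,b) \le 0$.

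Finally, I would trim this candidate list further using the independent computation in \cite[Corollary 6.2]{MyFirstPaper}, which establishes $\rho^1(T_n) > 0$ whenever $n = x^2 + x + 1$ for some integer $x > 1$, after reconciling the convention that the $T_n$ here is the mirror of the $T_{-n}$ of \cite{MyFirstPaper}. Removing these $n$ from the candidates produced by the enumeration leaves the $39$ values displayed in the statement. The one genuine obstacle is computational bookkeeping: being exhaustive over all decompositions $n = a^2 - a + b^2$ for each candidate $n$, and correctly matching sign conventions when invoking \cite[Corollary 6.2]{MyFirstPaper}. Once that is handled, the proposition follows at once by combining Lemma~\ref{order2}, the $\rho^0$-bound derived above, and the two cited results of \cite{MyFirstPaper}.
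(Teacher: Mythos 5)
Your proposal matches the paper's argument essentially step for step: it combines Corollary 4.4 and Theorem 5.6 of \cite{MyFirstPaper} with Lemma~\ref{order2} and the derived bound $\rho^0(L_{a,b})\ge \tfrac{1}{3}(2a^2+2b^2-8a-19b+10)$, reduces everything to checking $f(a,b)>0$ over the decompositions $n=a^2-a+b^2$ by a finite (computer) search, and then trims the list using \cite[Corollary 6.2]{MyFirstPaper} with the mirror-image convention accounted for. This is exactly how the paper arrives at the list of $39$ exceptions, so the proposal is correct and takes the same route.
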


One can do better by noticing the following consequence of the results of \cite{MyFirstPaper}.  

\begin{proposition} Let $A$ be a set of knots with distinct prime Alexander polynomials, vanishing $\rho^0$-invariants and nonvanishing $\rho^1$-invariants.  Let $B$ be a linearly independent set of knots with vanishing $\rho^0$-invariants and prime Alexander polynomials distinct from the Alexander polynomials of the elements of $A$, then $A\cup B$ is linearly independent.  
\end{proposition}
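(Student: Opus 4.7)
The plan is to assume, for contradiction, that there is a nontrivial relation $\sum_{i} n_i K_i + \sum_{j} m_j J_j = 0$ in $\conc$ with $K_i \in A$ and $J_j \in B$, and to derive a contradiction by separating the $A$-contribution from the $B$-contribution using the Alexander polynomial.

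First I would observe that the Alexander polynomial of any representative of the sum $K := \sum_i n_i K_i + \sum_j m_j J_j$ is, up to a factor of the form $f(t)f(t^{-1})$, equal to $\prod_i \Delta_{K_i}^{n_i} \prod_j \Delta_{J_j}^{m_j}$. Since $K$ is slice, its Blanchfield pairing admits a metabolizer, and that metabolizer decomposes orthogonally according to the primary decomposition of the Alexander module. The hypothesis that the Alexander polynomials occurring in $A\cup B$ are pairwise distinct primes forces each primary summand of the Blanchfield pairing of $K$ to be concentrated in a single $n_i K_i$-summand or a single $m_j J_j$-summand. Consequently the metabolizer restricts to a metabolizer on each such primary piece, so each $n_i K_i$ and each $m_j J_j$ is algebraically slice in isolation. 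Moreover, the obstruction machinery of \cite{MyFirstPaper} that produces the $\rho^1$-invariant can be applied to each primary component separately: for any $i$ one can choose a derivative of $n_i K_i$ lying in the $\Delta_{K_i}$-primary component of the Alexander module of $K$, whose $\rho^0$-invariant is controlled by $\rho^1(K_i)$ via \cite[Theorem 5.6]{MyFirstPaper}.

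For each index $i$ with $n_i \neq 0$, the hypothesis $\rho^1(K_i) \neq 0$ then produces a contradiction just as in the proof of \cite[Corollary 4.4]{MyFirstPaper}; hence every $n_i$ must vanish. Having eliminated the $A$-contribution, the remaining relation $\sum_j m_j J_j = 0$ in $\conc$, together with the assumed linear independence of $B$, forces every $m_j = 0$. This contradicts the original nontriviality assumption and completes the proof.

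The main obstacle is justifying that the Blanchfield-form primary decomposition genuinely separates the $A$-summands from the $B$-summands in a way that preserves not only algebraic sliceness but also the metabelian $\rho^1$-style obstruction. This requires a careful Levine-style splitting of the Blanchfield form along distinct prime factors of $\Delta$, combined with the naturality of the higher-order Alexander module used to define $\rho^1$; in particular one must verify that the derivative producing the bound of \cite[Theorem 5.6]{MyFirstPaper} can be chosen to respect this primary decomposition so that the resulting $\rho^0$-invariant of the derivative isolates the contribution from a single $K_i$.
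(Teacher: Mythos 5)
Your overall strategy --- separate the $A$-contribution from the $B$-contribution along the primary decomposition of the Alexander module, kill each $l_{K_0}$ using $\rho^1(K_0)\neq 0$, then invoke the linear independence of $B$ --- is the right shape, but the decisive step is asserted rather than proven, and you flag this yourself in your final paragraph. Showing that a metabolizer of the Blanchfield form of the connected sum splits along primary components only gives you algebraic information; it does not by itself isolate $l_{K_0}\rho^1(K_0)$ from the slice relation, because $\rho^1$ is a metabelian invariant of the actual knot and is not determined by the Seifert or Blanchfield form. The claim that ``the obstruction machinery \ldots can be applied to each primary component separately'' so that a derivative can be chosen ``whose $\rho^0$-invariant is controlled by $\rho^1(K_i)$'' is essentially the entire content of the proposition, and invoking ``the proof of \cite[Corollary 4.4]{MyFirstPaper}'' does not close it, since that corollary is itself a consequence of the same machinery. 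There is also a smaller slip in your setup: the hypotheses allow distinct elements of $B$ to share an Alexander polynomial (only distinctness from the polynomials of $A$ is assumed), so it is not true that each primary summand is concentrated in a single $m_jJ_j$-summand.

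The paper closes this gap by using the family of invariants $\rho^1_{p(t)}$ of \cite{MyFirstPaper} indexed by a polynomial $p(t)$, which packages exactly the primary-decomposition separation you are after into a computable quantity: by \cite[Theorem 4.1 and Proposition 3.5]{MyFirstPaper}, $\rho^1_p$ is a homomorphism on the subgroup of $\conc$ generated by knots with prime Alexander polynomial, and by \cite[Proposition 3.4]{MyFirstPaper} it evaluates to $\rho^1(K)$ when $p=\Delta_K$ and to $\rho^0(K)$ when $(p,\Delta_K)=1$. Applying $\rho^1_p$ with $p=\Delta_{K_0}$ to the slice relation, every term except $l_{K_0}\rho^1(K_0)$ becomes a multiple of a vanishing $\rho^0$-invariant, which yields $l_{K_0}=0$ immediately; the rest of the argument is as you describe. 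If you want to avoid citing these results, you would need to reprove the homomorphism property of $\rho^1_p$, which is where the Blanchfield-form splitting you sketch actually gets used.
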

\begin{proof}
The proof relies on a family von Neumann $\rho$-invariants indexed by the polynomial $p(t)$ defined in \cite{MyFirstPaper}, $\rho^1_{p(t)}$.  The following proposition reduces them in many cases to $\rho^0$ or $\rho^1$.

\begin{proposition*}[\cite{MyFirstPaper} Proposition 3.4]
Let $\Delta(t)$ be the Alexander polynomial of a knot $K$.  Then
\begin{enumerate}
\item  $\rho^1_{\Delta(t)}(K)=\rho^1(K)$.
\item If $(p,\Delta)=1$, then $\rho^1_{p(t)}(K)=\rho^0(K)$.
\end{enumerate}
\end{proposition*}

Moreover, by \cite[Theorem 4.1 and Proposition 3.5]{MyFirstPaper}, $\rho^1_p$ is a homomorphism when restricted to the subgroup of $\conc$ generated by knots with prime Alexander polynomials.

Suppose that some linear combination $\left(\underset{K\in A}{\#}{l_K K}\right)\#\left(\underset{J\in B}{\#}{m_J J}\right)$ is slice.  Let $p$ be the Alexander polynomial of some $K_0\in A$.  Since $\rho^1_p$ is a homomophism, it follows that 
\begin{equation}\label{lineardep}0=\sum_{K\in A}l_K\rho^1_p(K)+\sum_{J\in B} m_J\rho^1_p(J).\end{equation}
Next, by \cite[Proposition 3.4]{MyFirstPaper}, and since $\rho^0$ vanishes for all of the knots in $A$ and $B$, equation~\eref{lineardep} reduces to $0=l_{K_0}\rho^1(K_0)$.  Since $\rho^1(K_0)\neq0$, it follows that $l_{K_0}=0$.  

Since $K_0$ was arbitrarily chosen in $A$, it follows that every  $l_K$ vanishes.  Finally, since $B$ is assumed to be linearly independent in $\conc$, this implies that every $m_J=0$.  We conclude that $A\cup B$ is linearly independent. 
\end{proof}

In \cite[Corollary 1.2]{Tamulis}, Tamulis finds that $\{T_n:4n+1\text{ is prime}\}$ is linearly independent in $\conc$.  Hence, we can remove all elements which satisfy this condition from the set of possible counter-examples of Proposition~\ref{linear indep second try}.

\begin{theorem}\label{linear indep done}
The set containing all of the twist knots $T_n$ which are algebraically of order $2$ is linearly independent with the following $12$ possible exceptions:
$$
\begin{array}{rcl}
n&=&1, 
11, 
16, 
29, 
36, 
38, 
51, 55, 
61, 
66, 
83, 
101
.
\end{array}
$$
\end{theorem}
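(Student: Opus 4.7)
The plan is to combine the quantitative bound on $\rho^0(L_{a,b})$ computed above with the metabelian obstruction $\rho^1$ of \cite{MyFirstPaper}, and then prune the resulting list of exceptions using Tamulis's linear independence theorem together with a mixing principle for the family of invariants $\rho^1_{p(t)}$.

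First, I would combine the inequality $\rho^0(L_{a,b})\ge \frac{2a^2+2b^2-8a-19b+10}{3}$ already derived in this section with \cite[Theorem 5.6]{MyFirstPaper}, which gives $|2\rho^1(T_n)-\rho^0(L_{a,b})|\le 1$ once one verifies that the components of $L_{a,b}$ together with the two meridians $m_1,m_2$ are $\Z$-linearly independent in the Alexander module of $T_n\#T_n$. This was already checked in Lemma~\ref{order2}. Consequently $\rho^1(T_n)>0$ as soon as $f(a,b)=2a^2+2b^2-8a-19b+7>0$ for at least one representation $n=a^2-a+b^2$ with $a,b>0$. Since $f$ grows quadratically in each variable, the set where $f(a,b)\le 0$ is finite, and a direct computer enumeration of all $n$ for which every representation fails this inequality produces the $39$-element list in Proposition~\ref{linear indep second try}.

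Second, I would shrink this list by importing two external facts. On one hand, \cite[Corollary 6.2]{MyFirstPaper} shows $\rho^1(T_n)>0$ when $n=x^2+x+1$ with $x>1$ (with care, because the convention $T_n$ here is the mirror of $T_{-n}$ in \cite{MyFirstPaper}), removing further entries from the $39$-list. On the other hand, Tamulis \cite[Corollary 1.2]{Tamulis} proves that $\{T_n : 4n+1 \text{ prime}\}$ is already linearly independent in $\conc$, allowing the removal of every $n$ in the list for which $4n+1$ is prime. Checking which $n$ survive both reductions yields exactly the claimed $12$ exceptions.

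Third, and this is the conceptually delicate step, it is not automatic that one may simply take the union of two independent sets of knots; I must explain why Tamulis's independent family and the family singled out by $\rho^1\ne 0$ together form an independent set. The key is the polynomial-indexed family of invariants $\rho^1_{p(t)}$ from \cite{MyFirstPaper}: by \cite[Proposition 3.4]{MyFirstPaper}, $\rho^1_{\Delta_K}(K)=\rho^1(K)$ while $\rho^1_{p(t)}(K)=\rho^0(K)$ when $(p,\Delta_K)=1$, and by \cite[Theorem 4.1 and Proposition 3.5]{MyFirstPaper} each $\rho^1_p$ restricts to a homomorphism on the subgroup of $\conc$ generated by knots with prime Alexander polynomials. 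Twist knots have prime Alexander polynomial $nt^2-(2n+1)t+n$, distinct for distinct $n$, so given any finite linear relation in the union one selects a knot $K_0$ with nonvanishing $\rho^1$ and evaluates $\rho^1_{\Delta_{K_0}}$ on both sides: every other term dies (either because it records $\rho^0$ which vanishes on algebraically slice and algebraic order-$2$ twist knots, or by primality/distinctness of Alexander polynomials), and one concludes the coefficient of $K_0$ is zero; iterating, then invoking the independence of the Tamulis part, finishes the argument.

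The main obstacle is the combinatorial/arithmetic bookkeeping in steps two and three: one must be certain that the twelve listed values are precisely those $n$ that are algebraically order $2$, are not of the form $x^2+x+1$, do not have $4n+1$ prime, and admit no representation $n=a^2-a+b^2$ with $f(a,b)>0$. Ensuring the sign conventions between $T_n$ here and $T_{-n}$ of \cite{MyFirstPaper} are handled consistently, and checking that genuine exceptions such as $n=1$ (the figure-eight knot, which really is of order two in $\conc$) appear correctly, is the subtle part; the rest of the argument is a clean assembly of the bound derived above with the general framework of \cite{MyFirstPaper}.
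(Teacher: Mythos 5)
Your proposal follows essentially the same route as the paper: the quantitative bound on $\rho^0(L_{a,b})$ combined with \cite[Theorem 5.6]{MyFirstPaper} and Lemma~\ref{order2}, computer enumeration to get the $39$-element list of Proposition~\ref{linear indep second try}, and then pruning via Tamulis's result, justified by exactly the same $\rho^1_{p(t)}$ homomorphism argument the paper uses to show that the union of the two independent families remains independent. No substantive differences.
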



\bibliographystyle{plain}
\bibliography{biblio}  

\end{document}